\let\a\alpha    \let\c\gamma  
   \let\e\epsilon
\let\l\lambda  \let\k\chi 
  \let\GD\Delta    
\def\GL2{{\rm GL}_2}
\def\SL2{{\rm SL}_2}
\def\C{\mathbf C}
\def\A{\mathbf A}
\def\bk{\mathbf k}
\def\U{\mathcal U}
\def\F{{\mathbb F}}
  \def\Fq{{\F_{q}}}
\def\Z{{\mathbb Z}}
\def\P{\mathbb P}
\def\P1{{\mathbb P}^1}
\def\z{\rm z}
\def\bv{\mathbf v}
\def\har{\rm har}
\def\la{\langle}
\def\ra{\rangle}
\newtheorem{lemma}{Lemma}[section]
\newtheorem{cor}{Corollary}[section]
\newtheorem{prop}{Proposition}[section]
\newtheorem{theorem}{Theorem}[section]
\theoremstyle{definition}
\newtheorem{defn}{Definition}[section]
\theoremstyle{remark}
\newtheorem{rem}{Remark}[section]
\newtheorem{example}{Example}[section]
\numberwithin{equation}{section}
\begin{document}

\title[characteristic $p$ valued measure]{On the characteristic $p$ valued measure associated to
Drinfeld discriminant}
\author{Zifeng Yang}
\address{Department of Mathematics \\Capital Normal University\\
Beijing, 100037\\ P. R. China} \email{yangzf@mail.cnu.edu.cn}
\date{}

\thanks{The research of the author is partially supported by
``NSFC 10871107" and a research program in mathematical automata ``KLMM0914".}

\begin{abstract}
In this paper, after reviewing known results on functions over Bruhat-Tits
trees and the theory of characteristic $p$ valued modular forms,
we present some structure of the tempered distributions on
the projective space ${\P1}(\bk_\infty)$ over a complete function field $\bk_\infty$
of characteristic $p$,
and calculate the characteristic $p$ valued measure associated to the Drinfeld
discriminant and the characteristic $p$ valued measure associated to the
Poinca\'e series.
\end{abstract}

\maketitle

Keywords: Drinfeld Discriminant; Modular Forms; Bruhat-Tits Trees; Distributions.

\vspace{24pt}

Throughout this paper, $q$ is a fixed power of a
prime $p$, $\Fq$ is the finite field of $q$ elements. Let $\A={\Fq}[T]$ be the
polynomial ring with the discrete valuation at $\infty$, and $\bk$ be the fraction field of $\A$.
For an element $\a$ in $\A$,
we denote by $\deg_T(\a)$ the degree of $\a$ as a polynomial in $T$, the
subscript $T$ is omitted if it does not cause any confusion. By
convention, $\deg(0)=-\infty$.
On the completion $\bk_\infty=\Fq((\frac{1}{T}))$ of $\bk$ at $\infty$, we take $\pi=\frac{1}{T}$ as the
uniformizer, and set ${\A}_{\infty}=\Fq[[\pi]]=\Fq[[\frac{1}{T}]]$. The valuation $v$ and
absolute value $|\cdot|$ of $\bk_\infty$ are normalized such that
$v(\pi)=1$, and $|\pi|=\frac{1}{q}$. Let ${\C}_\infty$ be the completion of an algebraic closure of
${\bk}_\infty$ with the absolute value also denoted by $|\cdot|$.
And we set $\Gamma={\GL2}(\A)$.

\section{Introduction}\label{introduction}
Let ${\A}^{+}=\{\text{monic polynomials in } \Fq[T]\}$. Over the function field $\bk$, the analogue of
the Riemann-zeta values \cite{Ca1935} is given as
\[
{\z}(n)=\sum\limits_{a\in {\A}^{+}}\frac{1}{a^n}
\]
for $n\in {\Z}^{+}$. The analogous zeta function is given by Goss \cite[Chapter 8]{Go1998}
as an analytic function $\zeta(s)$ over the ``complex plane" $S_{\infty}={\C}_{\infty}^{\ast}
\times {\Z}_p$ with $s=(x,y)\in S_{\infty}$:
\begin{equation}\label{e:1.1}
\zeta(s)=\sum\limits_{a\in {\A}^{+}}\frac{1}{a^s}=\sum\limits_{j=0}^\infty x^{-j}\left(
\sum\limits_{\substack{a\in {\A}^{+}\\\deg(a)=j}} \la a\ra^{-y}\right),
\end{equation}
where $\la a\ra=T^{-\deg(a)}\cdot a$ is the $1$-unit part of $a\in {\bk}_{\infty}^{\times}$,
and $a^s = x^{\deg(a)}\cdot (\la a\ra)^y$. Under this construction, we get
the power sum ${\z}(n)=\zeta(T^n,n)$ for an integer $n\ge 1$. Goss \cite{Go1998} also constructs
the general $L$-series over the ``complex plane" $S_{\infty}$.

One aspect of the analytic theory over function fields is the study of
the Drinfeld's upper half plane $\Omega={\C}_{\infty}-{\bk}_{\infty}$, which
comes as the rank two case of the analytic structures related to the moduli spaces
of Drinfeld modules, see \cite{Ge1986} for an exposition. There are
rich structures of rigid analytic spaces on ${\Omega}$, so that we can
talk about the concepts like modular forms and Hecke operators over function fields,
see \cite{Go1980a}~\cite{Go1980b}~\cite{Go1980c}~\cite{Ge1986}~\cite{Ge1988}~\cite{Ge1996}, etc.
But the Hecke operators thus defined so far can't distinguish the modular forms,
and we don't know if there is any analogue of Mellin transforms which would
relate the modular forms to the $L$-series over $S_\infty$(in Goss' sense),
even for the
$L$-series coming from geometry as \cite{Bo2005} points out.
Since there are no Haar measures of characteristic $p$, a general idea of dealing
with this problem is to study the measures (more precisely,
distributions) which define the $L$-series through integrations over power functions,
and the measures associated to modular forms.

On the measures associated to the Goss zeta function $\zeta(s)$ in equation (\ref{e:1.1}),
we have the computations by Thakur \cite{Th1990} at finite places and by Yang \cite{Ya2001}
at the place $\infty$. The special values $\zeta(x,-j)$ are determined by the formula \cite{Ya2001}
\begin{equation}\label{e:1.2}
\zeta(x,-j)=\int_{{\A}_{\infty}} t^j d\mu_x^{(\infty)}(t)
=\int_{{\U}_1} t^j d(x\nu_x^{(\infty)})(t)
\end{equation}
for integers $j\ge 0$, where ${\U}_1$ is the $1$-units of ${\A}_{\infty}$, and
$\mu_x^{(\infty)}$ and $\nu_x^{(\infty)}$
are measures on ${\A}_{\infty}$ depending on the variable $x$.

On the measures associated to the modular forms, Teitelbaum \cite{Te1991} established an
isomorphism between cusp forms of an arithmetic group $G$ of ${\GL2}(\bk)$ and
the $G$-equivariant harmonic co-cycles on the Bruhat-Tits tree ${\mathcal T}$
associated to $\bk_{\infty}$. A harmonic co-cycle on ${\mathcal T}$ can also be
viewed as a measure on ${\P1}(\bk_{\infty})$, therefore Teitelbaum's theorem gives
a way to compare the relevant measures we have talked above. Goss \cite{Go1992} has pointed
out some implications from Teitelbaum's construction of measures. But the construction of the
harmonic co-cycles in Teitelbaum's theorem is quite abstract, it is not easy to understand
what is behind these measures. We will carry out an explicit computation of the
measure associated to the Drinfeld discriminant in Teitelbaum's theorem, wish to
understand these topics better.

We summarize some well-known facts about Bruhat-Tits trees in Section \ref{btt},
and give a simple introduction to characteristic $p$ valued modular forms in
Section \ref{mf}.

In Section \ref{cpvm}, we talk about the
the space of $C^h$ functions on an open compact subset $S$ of ${\P1}(\bk_\infty)$
and study its dual space, which is proved to be the space of $h$-admissible measures
on $S$.

In Section \ref{fbtt}, we introduce some known results of the theory of functions on Bruhat-Tits trees, and Teitelbaum's theorems on the correspondence
between cusp forms of an arithmetic subgroup $G$ of ${\GL2}(\bk)$ and $G$-equivariant
harmonic cocycles on Bruhat-Tits trees.

In Section \ref{mdd}, we explicitly determine the characteristic $p$ valued measure
associated to the Drinfeld discriminant $\Delta(z)$.

In Section \ref{comp}, we calculate some special values of the $L$-function
associated to the Drinfeld discriminant, and also give similar results on the
characteristic $p$ valued measure associated to the Poinca\'e series. Some comments
about characteristic $p$ valued modular forms and harmonic functions on Bruhat-Tits
trees are made, and some possible application to algebraic ergodic theory is also
mentioned in this section.


\section{Bruhat-Tits trees}\label{btt}

Let $V={\bk}_\infty\oplus {\bk}_\infty$ be a two dimensional vector space over ${\bk}_\infty$. A lattice $L$ of $V$
is a finitely generated free ${\A}_\infty$ module of rank $2$. Two lattices $L$ and $L'$ are said to
be equivalent if there exists a $\lambda\in {\bk}_\infty^*$ such that $L'=\lambda L$. Let $V_{\mathcal T}$
denote the set of all equivalence classes $[L]$ of lattices $L$ of $V$. For any elements
$\Lambda, \Lambda'\in V_{\mathcal T}$, we can find lattices $L\in \Lambda$ and $L'\in \Lambda'$
such that $L'\subset L$ and $L/L'\cong {\A}_\infty/{\pi}^n{\A}_\infty \cong {\A}/{\pi}^n{\A}$
for some non-negative integer $n$, which we denote by $d({\Lambda},{\Lambda}')=n$.
We define
\[
E_{\mathcal T}=\{e_{\Lambda\Lambda'}: \,\, d({\Lambda},{\Lambda}')=1\}.
\]
Then the Bruhat-Tits tree $\mathcal T$ consists of the set $V_{\mathcal T}$ of vertices and the
set $E_{\mathcal T}$ of (oriented, with orientation to be determined later in this section) edges.
The action of ${\GL2}(\bk_\infty)$ on $V$ induces an action
of ${\GL2}(\bk_\infty)$ on $\mathcal T$ (in fact, an action of ${\rm PGL}_2(\bk_\infty)$ on $\mathcal T$).

Suppose that $G$ is a group acting on a graph $X$. Then we have the quotient graph $G\backslash X$. A
fundamental domain of $G\backslash X$ is a subgraph $Z$ of $X$ such that
the natural map $Z\to G\backslash X$ is an
isomorphism of graphs. A fundamental domain of a graph by a group action doesn't necessarily exist, but
we have results in some special cases.
Let
\[
{\GL2}(\bk_\infty)^+=\{\alpha\in {\GL2}(\bk_\infty):\,\, v(\det \alpha)\text{ is an even integer}\}.
\]
Let ${\Lambda}_n$ denote the class of the lattice ${\A}_\infty \oplus \pi^n{\A}_\infty
\sim T^n{\A}_\infty \oplus {\A}_\infty$ for $n\in {\Z}$.
We know the following properties of the actions of subgroups of ${\GL2}(\bk_\infty)$ on
$\mathcal T$, the detail can be found in Serre's book \cite{Se1980}.
\begin{itemize}
\item Let $G$ be a subgroup of ${\GL2}(\bk_\infty)^+$. If the closure of $G$ in ${\GL2}(\bk_\infty)$ contains
${\SL2}(\bk_\infty)$, then the fundamental domain of the action of $G$ on $\mathcal T$
is
\[
\xymatrix@R=0pt{{\circ}\ar@{-}[r]&{\circ}\\ \Lambda_0 & \Lambda_1}.
\]
\item The fundamental domain of the action of $\Gamma={\GL2}(\A)$
 on $\mathcal T$ is
\begin{equation}\label{e:2.1}
\xymatrix@R=0pt{{\circ}\ar@{-}[r]&{\circ}\ar@{-}[r]&{\circ}\ar@{.}[r] &{\circ}\ar@{-}[r]&\ar@{.}[r]&\\
\Lambda_0 & \Lambda_1 & \Lambda_2 & \Lambda_n &  }.
\end{equation}
\end{itemize}
The action of ${\GL2}(\bk_\infty)$ on $V_{\mathcal T}$ is transitive,
thus we have the bijections
\begin{equation}\label{e:2.2}
V_{\mathcal T}\mathop{\longleftrightarrow}\limits^{\cong}
{\GL2}(\bk_\infty)/{\rm Stab}({\Lambda_0})
 = {\GL2}(\bk_\infty)/({\bk^*_\infty}\cdot {\GL2}(\A_\infty)),
\end{equation}
where ${\rm Stab}({\Lambda_0})$ denotes the stabilizer of
${\Lambda}_0$ in ${\GL2}({\bk}_\infty)$.
Therefore we can use matrices to express the vertices of $\mathcal T$ in terms of the
coset representatives in the above relation:
\begin{equation}\label{e:2.3}
V_{\mathcal T}=\left\{ \left(\begin{array}{cc}\pi^k & u\\0 & 1\end{array}\right): \, \,
k\in {\mathbb Z}, u\in{\bk}_\infty, u \,{\rm mod}\, \pi^k{\A}_\infty\right\}.
\end{equation}
Let ${\bv}_1=(1,0)^T, {\bv}_2=(0,1)^T$ be the standard basis of
$V={\bk}_\infty\oplus {\bk}_\infty$. Under
the one to one correspondence (\ref{e:2.2}) and the representations
of the coset representatives (\ref{e:2.3}),
we have
\begin{equation}\label{e:2.4}
[\pi^k{\bv}_1,u{\bv}_1+{\bv}_2]\longleftrightarrow
\left(\begin{array}{cc}\pi^k & u\\ 0 & 1\end{array}\right),
\end{equation}
where $k$ and $u$ are as in (\ref{e:2.3}), and $[\pi^k{\bv}_1,u{\bv}_1+{\bv}_2]$ denotes the equivalence class
of the ${\A}_\infty$-lattice generated
by $\pi^k{\bv}_1$ and $u{\bv}_1+{\bv}_2$.

An end of the tree $\mathcal T$ is an infinite path starting at some vertex and without back-tracking,
for example, the half line in (\ref{e:2.1}) from the vertex ${\Lambda}_0$ to
${\Lambda}_1$, $\Lambda_2$, and so on. Two ends are equivalent if
and only of they differ by a finite number of vertices and edges.
The set of ends of $\mathcal T$ is in bijection with ${\mathbb
P}^1(\bk_\infty)$. This bijection is set up in this paper as
follows. We choose the vertex $\Lambda_0$ as the starting point
of any end (in some equivalence class), then $\infty\in {\mathbb
P}^1(\bk_\infty)$ corresponds to the end (\ref{e:2.1}). For
$x=\sum_{n=n_0}^\infty c_n \pi^n \in {\bk}_\infty$, where $c_n\in
{\Fq}$, the corresponding end (which is in some equivalence class; but the end of
the following graph may not start with the vertex $\Lambda_0$) is
\begin{equation}\label{e:2.4.5}
\xymatrix@R=0pt{{\circ}\ar@{-}[r]&{\circ}\ar@{-}[r]&{\circ}\ar@{.}[r] &{\circ}\ar@{-}[r]&\ar@{.}[r]&\\
[{\bv}_1,x\cdot {\bv}_1+{\bv}_2] & [\pi{\bv}_1,x\cdot {\bv}_1+{\bv}_2] &
 & [\pi^n{\bv}_1,x\cdot {\bv}_1+{\bv}_2] &  }
\end{equation}
where we notice that $[\pi^n{\bv}_1,x\cdot {\bv}_1+{\bv}_2]$
$=[\pi^n{\bv}_1,x_{[n-1]}\cdot {\bv}_1+{\bv}_2]$, with
$x_{[n]}=\sum_{k=n_0}^{n}c_k \pi^k$.

The edges of $\mathcal T$ are oriented, with the set $E^{+}_{\mathcal T}$ of edges of positive orientation
given in terms of the $\infty$ end of $\mathcal T$:
\begin{equation}\label{e:2.5}
\xymatrix@R=0pt{{\circ}\ar@{->}[r]&{\circ}\ar@{->}[r]&{\circ}\ar@{.>}[r] &{\circ}\ar@{->}[r]&\ar@{.}[r]&\\
\Lambda_0 & \Lambda_1 & \Lambda_2 & \Lambda_n &  },
\end{equation}
any edge with a consistent orientation with the above is in $E^+_{\mathcal T}$ ( since $\mathcal T$ is
connected ).

Let $\Gamma_0={\GL2}(\Fq)$, a subgroup of $\Gamma={\GL2}(\A)$, and
\[
\Gamma_n=\left\{\left(\begin{array}{cc}a & b\\0 & d\end{array}\right): \,\,
a,d\in {\F}^*_q, b\in {\Fq}[T], \deg_T(b)\le n\right\},
\]
for $n\ge 1$. Then we have
\begin{prop}[Serre, \cite{Se1980}]\label{prop:2.1}\begin{itemize}\item[(1)] $\Gamma_n$ is the stabilizer of
$\Lambda_n$.
\item[(2)] $\Gamma_0$ acts transitively on the set of edges with origin $\Lambda_0$.
\item[(3)] For $n\ge 1$, $\Gamma_n$ fixes the edge $\Lambda_n\Lambda_{n+1}$
and acts transitively on the
set of edges with origin $\Lambda_n$ but distinct from
the edge $\Lambda_n\Lambda_{n+1}$.
\end{itemize}
\end{prop}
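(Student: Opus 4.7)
Proof proposal. My plan is to handle part~(1) by a direct matrix computation using the transitive $\GL2(\bk_\infty)$-action on $V_{\mathcal T}$, and parts~(2)--(3) by identifying the edges issuing from $\Lambda_n$ with lines in a certain $\Fq$-plane. For (1), the bijection~(\ref{e:2.2}) identifies the stabilizer of $\Lambda_0$ in $\GL2(\bk_\infty)$ with $\bk_\infty^{*}\GL2(\A_\infty)$, and a suitable diagonal matrix $g_n$ sends $\Lambda_0$ to $\Lambda_n$; thus $g=\begin{pmatrix}a&b\\c&d\end{pmatrix}\in\Gamma$ stabilizes $\Lambda_n$ if and only if the conjugate $g_n^{-1}gg_n$, whose off-diagonal entries are scaled by $\pi^{\pm n}$, lies in $\bk_\infty^{*}\GL2(\A_\infty)$. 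Since $\det g\in\Fq^{*}$, the allowable ambient scalar has valuation zero, so the condition reduces to requiring that all four entries of $g_n^{-1}gg_n$ lie in $\A_\infty$. Using the crucial fact $\A\cap\A_\infty=\Fq$, this forces $a,d\in\Fq^{*}$, bounds $\deg b$ by $n$, and (for $n\ge 1$) forces $c=0$, recovering exactly the group $\Gamma_n$.

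Parts~(2) and~(3) go through by identifying the edges out of $\Lambda_n$ with $\P1(\Fq)$. Fix a lattice representative $L_n$ of $\Lambda_n$ with $\A_\infty$-basis $T^n\bv_1,\bv_2$. Any neighbor $\Lambda'$ of $\Lambda_n$ is represented by a sublattice $L'\subset L_n$ with $L_n/L'\cong\Fq$; then $\pi L_n\subset L'$, so $L'/\pi L_n$ is a $1$-dimensional subspace of $L_n/\pi L_n\cong\Fq^2$, giving a natural bijection between the $q+1$ edges issuing from $\Lambda_n$ and $\P1(\Fq)$. Part~(2) is then immediate: $\Gamma_0=\GL2(\Fq)$ acts on $L_0/\pi L_0$ via its tautological representation, and $\GL2(\Fq)$ is transitive on $\P1(\Fq)$. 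For~(3), an element $\begin{pmatrix}a&b\\0&d\end{pmatrix}\in\Gamma_n$ acts on the basis $T^n\bv_1,\bv_2$ of $L_n$ and, reduced modulo $\pi L_n$, becomes $\begin{pmatrix}a&b_n\\0&d\end{pmatrix}$, where $b_n\in\Fq$ is the coefficient of $T^n$ in $b$; as $b$ ranges over polynomials of degree $\le n$, this residue is arbitrary, so $\Gamma_n$ induces the full Borel subgroup of $\GL2(\Fq)$. This Borel fixes the line $[1:0]$ and acts sharply $2$-transitively (as the affine group) on the complementary $q$ lines; a short identification shows that the sublattice whose image in $L_n/\pi L_n$ is the line $[1:0]$ is equivalent to $\Lambda_{n+1}$, confirming that the fixed edge is $\Lambda_n\Lambda_{n+1}$.

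The computation in~(1) is the principal obstacle: one must carefully track the $\pi^{\pm n}$ factors and apply the observation $\A\cap\A_\infty=\Fq$ to convert the lattice-stabilizer condition into the precise polynomial degree bounds on the entries of $g$. Once (1) is in place, parts~(2) and~(3) reduce to familiar transitive and Borel-subgroup actions on $\P1(\Fq)$.
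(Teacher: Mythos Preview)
Your proof is correct. The paper itself does not supply a proof of this proposition but simply cites Serre's \emph{Trees}~\cite{Se1980}; the standard argument there follows essentially the same lines as yours---compute $\operatorname{Stab}(\Lambda_n)$ by conjugating into $\bk_\infty^{*}\GL2(\A_\infty)$ via a diagonal matrix and invoking $\A\cap\A_\infty=\Fq$, then identify the link of $\Lambda_n$ with $\P1(L_n/\pi L_n)\cong\P1(\Fq)$ to reduce (2)--(3) to the transitivity of $\GL2(\Fq)$, respectively of its Borel, on the relevant subsets of $\P1(\Fq)$.
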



\section{Modular forms}\label{mf}

The group ${\GL2}({\bk}_\infty)$ acts on the Drinfeld's upper half
plane $\Omega={\C}_\infty-\bk_{\infty}
={\mathbb P}^1({\C}_\infty)-{\mathbb P}^1({\bk}_\infty)$
by
\[
\gamma\cdot z=\frac{az+b}{cz+d}
\]
for $\gamma=\left(\begin{array}{cc}a&b\\ c&d\end{array}\right)
\in {\GL2}({\bk}_\infty)$ and $z\in \Omega$.
This action is also written as $\gamma(z)$ or $\gamma z$.
The space $\Omega$ has a good covering $\{D_i\}_{i\in I}$,
where $I=\{(n,x): \,\, n\in {\Z}, x\in
{\bk}_\infty/\pi^{n+1} {\A}_\infty\}$, $D_{(n,x)}=x+D_n$, and
\[
D_n=\left\{z\in {\C}_\infty:\,\,
\begin{array}{ll}
&|\pi^{n+1}|\le |z|\le |\pi^n|,\\
&|z-c\pi^n|\ge |\pi^n|, |z-c\pi^{n+1}|\ge |\pi^{n+1}| \text{ for all $c\in {\F}_q^*$ }
\end{array}
\right\}.
\]
These subsets $D_i$'s of $\Omega$ are affinoid spaces and $\Omega$ has a rigid analytic structure, see
\cite{Ge1996} and \cite{GP1980} for the details. Hence we can talk about the rigid analytic functions
on $\Omega$ and study their properties.

A finitely generated $\A$-submodule $L\subset {\C}_\infty$ is said to be
an $\A$-lattice of ${\C}_\infty$
if $L$ is discrete in the topology of ${\C}_\infty$. Hence an $\A$-lattice
$L$ of ${\C}_\infty$ is a projective
$\A$-module of finite rank $d$, and we have an isomorphism of $\A$-modules
\[
L\cong {\A}^d
\]
since ${\A}={\Fq}[T]$ is a principal ideal domain.

\begin{example}\label{example:3.1}
Let $L\subset {\C}_\infty$ be an $\A$-lattice of rank $1$, in particular, a fractional ideal of $\A$. Then
\[
e_L(z)=z\prod\limits_{0\ne a\in L}\left(1-\frac{z}{a}\right)
\]
and
\begin{equation}\label{e:3.1}
t_L(z)=e^{-1}_L(z)=\sum\limits_{a\in L}\frac{1}{z-a}
\end{equation}
are $\Fq$-linear rigid analytic functions on $\Omega$. These two
functions are invariant under $L$-translations.
See \cite{Go1998} etc.
\end{example}

Generally, a subgroup $G$ of ${\GL2}({\bk})$ is called arithmetic
if $G$ is contained in ${\rm GL}(Y)$ and contains the kernel $G(Y,{\mathfrak a})$
of the reduction map
${\rm GL}(Y)\to {\rm GL}(Y/{\mathfrak a}Y)$ for some rank two projective $\A$ submodule
$Y$ of ${\bk}\oplus {\bk}$ and some ideal $\mathfrak a$ of $\A$.
In this paper, we assume that $\A$ is a polynomial ring over $\Fq$,
hence the rank $2$ projective
$\A$-module $Y$ is free: $Y \cong {\A}\oplus {\A}$, thus we
only consider the arithmetic
subgroups of ${\GL2}(\A)$.

As a subgroup of ${\GL2}(\bk_\infty)$, an arithmetic subgroup
$G$ acts on the Bruhat-Tits tree $\mathcal T$. The quotient
$G\backslash {\mathcal T}$ is a finite graph joined by
a finite number of ends. For example, in the following graph, there are two
ends on the left and on the right, and
the middle is finite.
\begin{equation*}
\xymatrix@R=8pt{
          &          &                 &                  &{\circ}\ar@{-}[r]\ar@{.}[d]&{\circ}\ar@{-}[dr]\ar@{.}[d] &  &\\
{\rm end}&\ar@{-}[r]\ar@{.>}[l]&{\circ}\ar@{-}[r]&{\circ}\ar@{-}[ur]\ar@{-}[dr]   &
 &   &{\circ}\ar@{-}[r]&{\circ}\ar@{-}[r]&\ar@{.>}[r]&{\rm end}\\
          &         &                 &                  &{\circ}\ar@{-}[r]\ar@{.}[u]&{\circ}\ar@{-}[ur]\ar@{.}[u] &}
\end{equation*}
These ends are called the cusps of the arithmetic subgroup $G$. We have the bijection
\[
\{\text{cusps of $G$}\}\mathop{\longrightarrow}\limits^{\cong}
G\backslash {\mathbb P}^1(\bk).
\]
We refer \textsection $2$, Chapter II of \cite{Se1980} for the details. Thus we see
from (\ref{e:2.1}) that
the group $\Gamma$ has exactly one cusp $\infty$.

\begin{defn}\label{defn:3.1}
Let $G$ be an arithmetic subgroup of ${\GL2}({\bk})$. A rigid analytic
function $f:\Omega\to {\C}_\infty$ is
called a modular form with respect to the arithmetic subgroup $G$ of weight $k$
and type $m$
for a non-negative integer $k$ and a class $m$ in ${\Z}/(q-1)$ if the following
two conditions are
satisfied:
\begin{itemize}
\item[(1)] for any $\gamma=\left(\begin{array}{cc}a&b\\ c&d\end{array}\right)
\in G$, the
following equation holds:
\[
f|{[\gamma]_{k,m}}(z)=f(z),
\]
where $f|{[\gamma]_{k,m}}(z):= (\det(\gamma))^{m}j(\gamma,z)^{-k}
f(\gamma z)$, and
\begin{equation}\label{e:3.1.5}
j(\gamma,z)=cz+d;
\end{equation}
\item[(2)] $f$ is holomorphic at every cusp of $G$.
\end{itemize}
\end{defn}
In the above definition, the second condition is interpreted as follows. For a cusp $p$
of $G$, we take an element $\rho
\in {\GL2}(\bk)$ such that $\rho(\infty)=p$. Then the stabilizer of $\infty$
in $\rho^{-1}G\rho$
contains a maximal subgroup of $G$ consisting of elements of the form
$\gamma_x=\left(\begin{array}{cc}1 & x\\0 & 1\end{array}\right)$, where $x\in L$
for some fractional ideal $L$
of $\A$. Such an element $\gamma_x$ acts on $\Omega$ as a translation by $x$,
therefore the first condition
gives $f(z+x)=f({\gamma}_x z)=f(z)$. Compared with the classical situation,
the rigid analytic
function $t_L(z)$ of (\ref{e:3.1}) serves as a parameter at the infinity
point $\infty$. So $f$
is holomorphic at $p$ if and only if $f$ can be expanded as a series
in terms of the parameter
$t_L(z)$:
\begin{equation}\label{e:3.2}
f|{[\rho]_{k,m}}(z)=\sum\limits_{i\ge 0}c_i t_L(z)^i, \quad\text{ where $c_i\in {\C}_\infty$}.
\end{equation}
If the expansion coefficient $c_0=0$ in (\ref{e:3.2}) for all cusps of $G$, then $f$ is called
a cusp form of $G$ with weight $k$ and type $m$.

\begin{rem}\label{rem:3.1}
Although the above definition of modular forms is stated for the
general case when $\bk$ is the field of functions on a complete,
geometrically irreducible curve over the field $\Fq$ and $\A$ is the
ring of regular functions away from the point $\infty$, we only
consider the case when the curve is the projective line ${\mathbb
P}^1_{\Fq}$, so we assume ${\bk}={\Fq}(T)$ and
${\A}={\Fq}[T]$ throughout this paper.
\end{rem}

\begin{example}\label{example:3.2}
The Eisenstein series $E_k(z)$ is defined as
\begin{equation}\label{e:3.3}
E_k(z)=\sum\limits_{(0,0)\ne (c,d)\in {\A}^2}\frac{1}{(cz+d)^k}
\end{equation}
for an integer $k\ge 0$. If $k\not\equiv 0\mod (q-1)$, then $E_k(z)$ is identically equal to $0$.
For $k\equiv 0 \mod (q-1)$, the Eisentein series $E_k(z)$ is a modular form of the arithmetic
subgroup $\Gamma={\GL2}(\A)$ with weight $k$ and type $0$. But it is not a cusp form.
See \cite{Go1980b} for the detail.
\end{example}

\begin{example}\label{example:3.3}
This is an example given by Gekeler \cite{Ge1988}.
Let
\[
H=\left\{\left(\begin{array}{cc}a&b\\0&1\end{array}\right):\, \, a\in {\Fq}^*, b\in {\A}\right\}
\]
be a subgroup of $\Gamma$, and $t(z)$ be the rigid analytic function defined as (\ref{e:3.1}) for the
rank $1$ lattice ${\A}\bar{\pi}$, where the constant $\bar{\pi}\in {\C}^*_\infty$ is some ``period".
Then the Poincar\'e series
\begin{equation}\label{e:3.4}
P_{k,m}(z)=\sum\limits_{\gamma\in H\backslash\Gamma}(\det(\gamma))^{m}j(\gamma,z)^{-k}t^m(\gamma z)
\end{equation}
is a cusp form of $\Gamma$ with weight $k$ and type $m \mod (q-1)$,
where $j(\gamma,z)$ is defined as in
(\ref{e:3.1.5}).
\end{example}

\begin{example}\label{example:3.4}
Let $L\subset {\C}_\infty$ be an $\A$-lattice of ${\C}_\infty$ of rank $2$. Such a lattice $L$ is
associated with an ${\Fq}$-linear function
\[
e_L(z)=z\prod\limits_{0\ne \lambda\in L}\left(1-\frac{z}{\lambda}\right)
\]
which is rigid analytic on $\Omega$. Then there exists a ring homomorphism
\[
\begin{array}{llll}
\phi^L: &{\A}&\to &{\C_\infty}\{\tau\}=
\{\sum_{i\ge 0}a_i\, \tau^i:\,\, a_i\in {\C}_\infty\text{ for each integer $i\ge 0$}\}\\
        &a &\mapsto &\phi^L_a
\end{array}
\]
such that
\begin{equation}\label{e:3.5}
\phi^L_a\,\cdot\, e_L = e_L\,\cdot\, a
\end{equation}
for any $a\in {\A}$, where $\tau(x)=x^q$ for any $x\in {\C}_\infty$
is the Frobenius map. In the above equation (\ref{e:3.5}), the dot
notion ``\,$\cdot$\," denotes the composition map, $a$ is
regarded as the multiplication map from ${\C}_\infty$ to itself, and
$e_L: {\C}_\infty\to {\C}_\infty$ is the additive map given by the
function $e_L(z)$.

The ring homomorphism $\phi^L$ is a Drinfeld module
of rank $2$ over ${\C}_\infty$ associated to the $\A$-lattice $L$ of
${\C}_\infty$. For $0\ne a\in {\A}$, we can see that $\phi^L_a(z)$ is a
polynomial in $z$ and is given as
\[
\phi^L_a(z)= az\prod\limits_{0\ne \lambda\in a^{-1}L/L}
\left( 1-\frac{z}{e_L(\lambda)}\right)
\]
by checking the zeroes of $e_L(az)$ and $\phi^L_a(e_L(z))$ and the coefficients of
the first terms
of the expansions of these two rigid analytic functions in terms of $z$. The
homomorphism $\phi$ is determined by
\begin{equation}\label{e:3.6}
\phi^L_T=T\tau^0 + g_L\,\tau + \Delta_L\tau^2.
\end{equation}
We notice that $\lambda L$ is also an
$\A$-lattice of ${\C}_\infty$ for $\lambda\in {\C}^*_\infty$
and $e_{\lambda L}(\lambda z)=\lambda\, e_L(z)$, therefore
\[
\begin{array}{ll}
\phi^{\lambda L}_T(\lambda\, e_L(z))&=\phi^{\lambda L}_T(e_{\lambda L}(\lambda\, z))\\
&=e_{\lambda L}(T\lambda\, z)\\
&=\lambda\, e_L(Tz)\\
&=\lambda\, \phi^L_T(e_L(z)).
\end{array}
\]
Putting the above equation into equation (\ref{e:3.6}) for $\phi^L_T$ and $\phi^{\lambda L}_T$, we get
\[
T\lambda\, e_L(z) + g_{\lambda L}\lambda^qe_L(z)^q + \Delta_{\lambda L}\lambda^{q^2}e_L(z)^{q^2}
=\lambda\, Te_L(z) + \lambda\, g_{L} e_L(z)^q + \lambda\, \Delta_L e_L(z)^{q^2}.
\]
Thus we have
\begin{equation}\label{e:3.7}
g_{\lambda L} = \lambda^{1-q} g_L, \qquad \Delta_{\lambda L}=\lambda^{1-q^2} \Delta_L.
\end{equation}
An $\A$-lattice of ${\C}_\infty$ of rank $2$ can be written as
$L={\A}\omega_1\oplus {\A}\omega_2$ $\sim$ ${\A}z\oplus {\A}$ with
$\omega_1,\omega_2\in {\C}^*_\infty$ and $z=\omega_1/\omega_2\in
\Omega$ (because $L$ is discrete in the topology of ${\C}_\infty$).
We define for the lattice $L_z={\A}z\oplus {\A}\subset {\C}_\infty$
with $z\in \Omega$
\begin{equation}\label{e:3.8}
g(z)=g_{L_z}, \qquad \Delta(z)=\Delta_{L_z}.
\end{equation}
From the above equation (\ref{e:3.8}), it is not difficult to see that for
$\gamma=\left(\begin{array}{cc}a&b\\c&d\end{array}\right)\in \Gamma$
\[
g(\gamma z)=(cz+d)^{q-1} g(z), \qquad \Delta(\gamma z)=(cz+d)^{q^2-1}\Delta(z).
\]
The two functions $g(z)$ and $\Delta(z)$ on $\Omega$ can be
expressed in terms of Eisenstein series $E_k(z)$ (see \cite{Go1980a}
or Section 2, Chapter II of \cite{Ge1986}), so they are rigid
analytic functions on $\Omega$ and are holomorphic at the cusp
$\infty$ of $\Gamma$. Therefore $g(z)$ is a modular form of $\Gamma$
of weight $q-1$ and type $0$ and $\Delta(z)$ is a modular form of
$\Gamma$ of weight $q^2-1$ and type $0$.

The function $\Delta(z)$ on the Drinfeld's upper half plane $\Omega$ is called the Drinfeld discriminant. We refer \cite{Ge1986} and
Chapter 4 of \cite{Go1998} for more details and related topics
of Drinfeld modules.
\end{example}

\begin{theorem}\label{thm:3.1}
We have the following results with respect to the modular forms of $\Gamma$:
\begin{itemize}
\item[(1)]{\rm (Goss, \cite{Go1980a})} The space of modular forms with respect to
the group $\Gamma$ of type $0$ (and of all weights) is the polynomial ring ${\C}_\infty[g,\Delta]$.
\item[(2)]{\rm (Gekeler, \cite{Ge1988})} The space of modular forms with respect to
the group $\Gamma$ (of all weights and all types) is the polynomial ring ${\C}_\infty[g,P_{q+1,1}]$, where
$P_{q+1,1}$ is given in (\ref{e:3.4}). Moreover, $\Delta=-(P_{q+1,1})^{q-1}$.
\item[(3)]{\rm (Goss, \cite{Go1980a}; also see \cite{Ge1988})} The following two equalities hold:
\begin{align}
&g(z)=(T^q-T)E_{q-1}(z),\notag\\
&\Delta(z)=(T^{q^2}-T)E_{q^2-1}(z)+(T^q-T)^q E_{q-1}(z)^{q+1}. \label{e:3.9}
\end{align}
\end{itemize}
\end{theorem}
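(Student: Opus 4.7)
The three parts are naturally proved in order (1), (2), (3): part (2) bootstraps from (1), while (3) is an essentially independent direct computation. For part (1), the plan is to show by induction on weight that every type-$0$ modular form $f$ of weight $k$ for $\Gamma$ lies in $\C_\infty[g, \Delta]$. The essential inputs are (i) $\Delta$ does not vanish on $\Omega$, since the discriminant of the rank-$2$ $\A$-lattice $L_z \subset \C_\infty$ is nonzero for every $z \in \Omega$; and (ii) $g$ does not vanish at the unique cusp $\infty$ of $\Gamma$, while $\Delta$ (a cusp form) vanishes there to order exactly $q-1$ --- we use here that any type-$0$ form has $t$-expansion at $\infty$ involving only powers $t^{j(q-1)}$. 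Writing $k = a(q-1) + b(q^2-1)$ (necessarily divisible by $q-1$), the monomial $g^{k/(q-1)}$ has the correct weight and nonzero constant $t$-coefficient; subtracting a suitable scalar multiple from $f$ leaves a cusp form of weight $k$, which, being type-$0$, vanishes at $\infty$ to order $\geq q-1$. Dividing by $\Delta$ (legitimate by (i) and the matching of orders) then yields a type-$0$ form of weight $k - (q^2-1)$, to which induction applies. Algebraic independence of $g$ and $\Delta$ follows from their distinct orders of vanishing at $\infty$.

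Part (2) rests on (1). The form $P_{q+1,1}^{q-1}$ has weight $q^2-1$ and type $0$, is a cusp form, and thus by (1) lies in $\C_\infty[g, \Delta]$; there the cusp forms of weight $q^2-1$ are spanned by $\Delta$ alone (the only other type-$0$ weight-$(q^2-1)$ monomial is $g^{q+1}$, which is non-cuspidal). Hence $P_{q+1,1}^{q-1} = c\Delta$ for some $c \in \C_\infty$, and comparing leading $t$-coefficients at $\infty$ yields $c = -1$. For the generation claim, given a modular form $f$ of weight $k$ and type $m$ for $\Gamma$ with $0 \leq m \leq q-2$, the product $f \cdot P_{q+1,1}^{q-1-m}$ has type $0$ and thus lies in $\C_\infty[g, \Delta]$; substituting $\Delta \mapsto -P_{q+1,1}^{q-1}$ writes it as a polynomial $Q \in \C_\infty[g, P_{q+1,1}]$. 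An order-of-vanishing count at $\infty$ then shows every monomial appearing in $Q$ has $P_{q+1,1}$-degree at least $q-1-m$, so dividing back recovers $f$ as a polynomial in $g$ and $P_{q+1,1}$.

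For part (3), I expand $e_{L_z}(w) = \sum_{i \geq 0}\alpha_i(z)\, w^{q^i}$ with $\alpha_0 = 1$ and compare $w^{q^j}$ coefficients on both sides of $\phi_T^{L_z}(e_{L_z}(w)) = e_{L_z}(Tw)$; the $j = 1, 2$ equations give
\[
g = (T^q - T)\alpha_1, \qquad \Delta = (T^{q^2} - T)\alpha_2 - g\,\alpha_1^q.
\]
Identifying $\alpha_1, \alpha_2$ with Eisenstein series is then carried out from the product expansion $e_{L_z}(w) = w\prod_{0 \neq \lambda \in L_z}(1 - w/\lambda)$ via Newton's identities: one obtains $\alpha_1 = E_{q-1}(z)$ and $\alpha_2 = E_{q^2-1}(z) + E_{q-1}(z)^{q+1}$. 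Substituting, together with the characteristic-$p$ identity $(T^{q^2} - T) - (T^q - T) = (T^q - T)^q$, then produces the stated formulas.

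The main obstacle is the characteristic-$p$ Newton-identity computation of $\alpha_2$: the usual denominators $1, 2, \ldots$ vanish modulo $p$ in many places, so one uses the valid identity $(q^2-1)\,e_{q^2-1} = \sum_{i} (-1)^{i-1} E_i\, e_{q^2-1-i}$ (meaningful since $q^2 - 1 \equiv -1 \pmod p$), where $e_j$ denotes the $j$-th elementary symmetric function in $\{1/\lambda : 0 \neq \lambda \in L_z\}$. Two sparsity facts then conspire: $E_i = 0$ unless $(q-1) \mid i$ (from $\F_q^*$-invariance of $L_z$), and $e_j = 0$ unless $j + 1$ is a power of $q$ (from $\F_q$-linearity of $e_{L_z}$). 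These isolate exactly two contributions at $i = q^2 - q$ and $i = q^2 - 1$; combining with the Frobenius identity $E_{q^2-q} = E_{q-1}^q$ and careful sign bookkeeping then produces the stated $\alpha_2$, and hence the precise coefficient $(T^q - T)^q$ in the formula for $\Delta$.
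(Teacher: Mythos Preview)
The paper does not prove Theorem~\ref{thm:3.1}: it is stated as a compilation of cited results (Goss~\cite{Go1980a} for parts~(1) and~(3), Gekeler~\cite{Ge1988} for part~(2)), with no argument supplied beyond the attributions. So there is nothing to compare your proposal against in the paper itself.

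That said, your outline is correct and is essentially the argument found in the original sources. Part~(1) is Goss's inductive argument (subtract a scalar multiple of $g^{k/(q-1)}$ to kill the constant $t$-coefficient, then divide the resulting cusp form by the nowhere-vanishing $\Delta$); part~(2) is Gekeler's reduction to type~$0$ via multiplication by a power of $P_{q+1,1}$, together with the identification $P_{q+1,1}^{\,q-1}=-\Delta$ by matching leading $t$-coefficients; and part~(3) is exactly the standard coefficient comparison in $\phi_T^{L_z}\!\bigl(e_{L_z}(w)\bigr)=e_{L_z}(Tw)$, followed by Newton's identities adapted to the $\F_q$-linear setting. Your handling of the characteristic-$p$ Newton step is accurate: the two sparsity constraints (only $(q-1)\mid i$ contributes to $E_i$, only $j=q^m-1$ contributes to $e_j$) isolate precisely the terms $i=q^2-q$ and $i=q^2-1$, and the Frobenius identity $E_{q^2-q}=E_{q-1}^{\,q}$ then gives $\alpha_2=E_{q^2-1}+E_{q-1}^{\,q+1}$ as claimed. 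The final substitution using $(T^{q^2}-T)-(T^q-T)=(T^q-T)^q$ is correct.

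One small remark on part~(2): the divisibility step (showing that every monomial of $Q$ has $P_{q+1,1}$-degree at least $q-1-m$) is cleaner if argued by type rather than by order of vanishing, since the type grading on $\C_\infty[g,P_{q+1,1}]$ is simply the $P_{q+1,1}$-degree modulo $q-1$; but your order-of-vanishing argument also works.
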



\section{Characteristic $p$ valued measures on ${\P1}({\bk}_\infty)$}\label{cpvm}

For $a\in {\bk}_\infty$ and $0\ne \rho\in {\bk}_\infty$, let $B_a(|\rho|)=\{x\in {\bk}_\infty:\,\,
|x-a|\le |\rho|\}$ be the closed ball of radius $|\rho|$ centered at $a$, and let $B_{\infty}(|\rho|)
=\{x\in {\bk}_{\infty}: \,\, |x|\ge |\rho^{-1}|\}\cup \{\infty\}$, which is viewed as a closed ball
of radius $|\rho|$ centered
at $\infty$. A function $f: B_a(|\rho|)\to {\C}_{\infty}$ on the ball $B_a(|\rho|)$ is said to be analytic
if $f$ can be expanded as a Taylor series
\begin{equation}\label{e:cpvm1}
f(x)=\sum\limits_{n=0}^{\infty} c_n \left(\frac{x-a}{\rho}\right)^n \quad \text{$(c_n\in {\C}_\infty$ for $n\ge 0$)}
\end{equation}
which is convergent for any $x\in B_a(|\rho|)$, i.e., $c_n\to 0 $ as $n\to \infty$. And a function
$f:B_{\infty}(|\rho|)\to {\C}_{\infty}$ is said to be analytic on $B_{\infty}(|\rho|)$
if $g(x):=f(\frac{1}{x})$ is analytic on the ball $B_0(|\rho|)$.

Let $S\subset {\P1}({\bk}_\infty)$ be an open compact nonempty subset. Hence $S$ is a finite disjoint union of closed
balls of positive radii.

A function $f: S\to {\C}_{\infty}$ is said to be locally analytic (of order $l$, respectively)
at the point $a\in S$ if $f$ is analytic on some closed ball $B_a$ which is centered at $a$ and of positive
radius (at least $|\pi|^l$, respectively). And $f$ is said to be locally analytic (of order $l$, respectively) on
$S$ if $f$ is locally analytic (of order $l$, respectively) at every point of $S$.
The space of all locally analytic functions (of order $l$, respectively) (taking values in ${\C}_{\infty}$) on $S$ is
denoted by $LA(S)$ ($LA_l(S)$, respectively).

\begin{rem}\label{rem:cpvm0}
The above definition of local analyticity of order $l$ is a little
ambiguous, since the balls $B_a(|\pi|^l)$ may not be contained in
$S$ for a given positive integer $l$. But we have assumed that $S$
is open compact, therefore $B_a(|\pi|^l)\subset S$ as long as the
integer $l$ is sufficiently large (say $|\pi|^l$ is less than or equal to the
smallest value among all radii in a decomposition of $S$ into finite disjoint
union of closed balls of positive radii). So the definition makes sense, and we
always assume that such an integer $l$ in the above is sufficiently large.
\end{rem}

The space $LA_l(S)$ is equipped with a norm $||\cdot||$ as follows. We have a decomposition of $S$ into a finite
disjoint union
\begin{equation}\label{e:cpvm2}
S=\bigsqcup_i B_i,
\end{equation}
where each $B_i$ is a closed ball of radius $|\pi|^l$. We notice that if one of these closed balls, say $B_{i_0}$,
is centered at $\infty$, then $B_{i_0}=B_{\infty}(|\pi|^l)=\{x\in {\bk}_\infty:\,\,
|x|\ge |\pi|^{-l}\}\cup\{\infty\}$. Suppose $f\in LA_l(S)$. As every element of a closed ball
is a center under a non-Archimedean absolute value, $f$ has an expansion as equation (\ref{e:cpvm1}) on
each closed ball $B_i$ in the decomposition (\ref{e:cpvm2}):
\begin{equation}\label{e:cpvm3}
f|_{B_i}(x)=\sum\limits_{n\ge 0} c_{in} \left(\frac{x-a_i}{\pi^l}\right)^n,
\end{equation}
for any $a_i\in B_i$, and the expansion should be replaced by the following if $B_{i_0}$
is the closed ball centered at $\infty$:
\[
g_{i_0}(x)=\sum\limits_{n\ge 0} c_{i_0,\,n} \left(\frac{x}{|\pi|^l}\right)^n,
\quad g_{i_0}(x):=f|_{B_{i_0}}\left(\frac{1}{x}\right).
\]
Then we define
\begin{equation}\label{e:cpvm4}
||f||_{B_i}=\max\limits_{n\ge0}\{|c_{in}|\}, \quad\text{ and } ||f||=\max\limits_{i}\{||f||_{B_i}\}.
\end{equation}
To see (\ref{e:cpvm4}) is well defined, we need to show
\begin{lemma}\label{lemma:cpvm1}
$||f||_{B_i}$ is independent of the choices of the center $a_i$.
\end{lemma}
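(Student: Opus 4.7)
The plan is to reduce the statement to a change-of-variables computation on the Taylor expansion, using the strong triangle inequality on ${\C}_\infty$ together with the fact that in characteristic $p$ every integer reduces to an element of $\F_p$ and therefore has absolute value at most $1$.

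Concretely, I would fix two centers $a, a' \in B_i$ and set $b := (a - a')/\pi^l$. Since $B_i$ is a closed ball of radius $|\pi|^l$ containing both points, the ultrametric inequality forces $|a - a'| \le |\pi|^l$, so $|b| \le 1$. Writing the two Taylor expansions
\[
f|_{B_i}(x) = \sum_{n \ge 0} c_n u^n = \sum_{m \ge 0} c_m' v^m, \qquad u = \frac{x-a}{\pi^l}, \; v = \frac{x-a'}{\pi^l},
\]
and noting that $u = v - b$, the binomial theorem together with a reorganization of the double sum gives the explicit coefficient relation
\[
c_m' = \sum_{n \ge m} \binom{n}{m} (-b)^{n-m} c_n.
\]
This series converges in ${\C}_\infty$ because $c_n \to 0$ and $|(-b)^{n-m}\binom{n}{m}| \le 1$. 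The non-Archimedean triangle inequality then yields
\[
|c_m'| \le \sup_{n \ge m} |\tbinom{n}{m}|\, |b|^{n-m}\, |c_n| \le \sup_{n \ge m} |c_n| \le \|f\|_{B_i},
\]
where $\|f\|_{B_i}$ denotes the norm computed with respect to the center $a$. Taking the supremum over $m$ produces $\|f\|'_{B_i} \le \|f\|_{B_i}$, and swapping the roles of $a$ and $a'$ gives the reverse inequality, hence equality.

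When $B_i$ is the ball at infinity the statement is essentially vacuous: the convention of the paper is to expand through the coordinate $1/x$ with $\infty$ as the unique center, and if one wished to admit other centers one could simply pull back along $x \mapsto 1/x$ and invoke the finite-ball case just treated. I do not anticipate a serious obstacle; the only step requiring a moment of care is justifying the rearrangement of the double series defining $c_m'$, which is automatic in a non-Archimedean complete field once one has the uniform termwise bound and $c_n \to 0$. The single genuinely characteristic-$p$ ingredient is the estimate $|\binom{n}{m}| \le 1$, without which the non-Archimedean bound would fail.
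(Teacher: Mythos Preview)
Your argument is correct and essentially identical to the paper's: both pick two centers, relate the Taylor coefficients via the binomial expansion $c_m' = \sum_{n \ge m} \binom{n}{m}(-b)^{n-m} c_n$, bound each term using $|b|\le 1$ and $\left|\binom{n}{m}\right|\le 1$, and conclude by symmetry. One minor remark: the bound $\left|\binom{n}{m}\right|\le 1$ is not specifically a characteristic-$p$ phenomenon but holds for any non-Archimedean absolute value for which integers lie in the valuation ring, so your closing comment slightly overstates its role.
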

\begin{proof}
Suppose $b_i\in B_i$. Then $b_i$ is also a center of $B_i$.  And
$f|_{B_i}(x)$ has expansion (\ref{e:cpvm3}) at $a_i$, and has the
following expansion at $b_i$ as well:
\[
f|_{B_i}(x)=\sum\limits_{n\ge 0}
c'_{in}\left(\frac{x-b_i}{\pi^l}\right)^n, \quad\text{ where }
c'_{in}=\sum\limits_{j\ge n}
c_{ij}\binom{j}{n}\left(\frac{b_i-a_i}{\pi^l}\right)^{j-n}.
\]
As $|b_i-a_i|\le |\pi^l|$, $\left|\binom{j}{n}\right|\le 1$, and
$|c_{ij}|\le ||f||_{B_i}$, we see that $||f||'_{B_i}:=\max_{n\ge
0}\{|c'_{in}|\}\le ||f||_{B_i}$. In the same way, we have
$||f||_{B_i}\le ||f||'_{B_i}$, hence get the conclusion.
\end{proof}
It is easy to see that $LA_l(S)$ is a Banach space with the norm
defined above. And we have the following sequence of closed
inclusion of Banach spaces
\begin{equation*}
\cdots\hookrightarrow LA_l(S)\hookrightarrow
LA_{l+1}(S)\hookrightarrow LA_{l+2}(S)\hookrightarrow \cdots
\end{equation*}
and $LA(S)=\lim\limits_{\longrightarrow}
LA_l(S)=\mathop{\cup}\limits_{l} LA_l(S)$. We equip $LA(S)$ with the
topology of direct limit. In our case, a subset $W\subset LA(S)$ is
closed if and only if $W\cap LA_l(S)$ is closed in $LA_l(S)$ for
each sufficiently large integer $l$. This is equivalent to saying
that for any topological space $Z$, a map $f: LA(S)\to Z$ is
continuous if and only if $f|_{LA_l(S)}: LA_l(S)\to Z$ is continuous
for each sufficiently large $l$.

We denote by $M^*={\rm Hom}_{\C_\infty}(M, {\C}_\infty)$ the space
of all ${\C}_\infty$-linear continuous maps from $M$ to
${\C}_\infty$ for a given topological vector space $M$ over
${\C}_\infty$.

\begin{rem}\label{rem:cpvm0.5}
Suppose $(M, ||\cdot||)$ is a separable ${\C}_\infty$-Banach space.
For any map $f: M\to {\C}_\infty$ we define $||f||=\sup\limits_{0\ne
x\in M}\left\{|f(x)|/||x||\right\}$. Then
\begin{equation*}
\begin{split}
M^*&=Hom_{{\C}_\infty}(M,{\C}_\infty) \\
   &=\{f: f \text{ is ${\C}_\infty$-linear and continuous from $M$ to ${\C}_\infty$ }\} \\
   &=\{f: f \text{ is ${\C}_\infty$-linear and } ||f||< \infty \}.
\end{split}
\end{equation*}
\end{rem}

The elements of $LA(S)^*$ are called tempered distributions on $S$
(with values taken in ${\C}_\infty$), and we write $(f,\mu):=\mu(f)$
for $\mu\in LA(S)^*$ and $f\in LA(S)$.

\begin{rem}\label{rem:cpvm1}
A ${\C}_\infty$-valued distribution $\mu$
on $S$ is a ${\C}_\infty$-linear function from the set
\[
P(S)=\{U:U\subset S\text{ is open and compact }\}
\]
to ${\C}_\infty$ which satisfies the finite additivity property: if
$U,V\in P(S)$ and $U\cap V=\emptyset$, then
$\mu(U\cup V)=\mu(U)+\mu(V)$. If $\{\mu(U):U\in P(S)\}$ is bounded,
then $\mu$ is called a measure on $S$. Since locally constant functions on $S$
are locally analytic, we see that an element $\mu \in LA(S)^*$ can be assigned
for any open-compact subset $U\subset S$
\[
\mu(U):= (\xi_U(x), \mu),
\]
where $\xi_U(x)$ is the characteristic function of the subset $U$. Then it is easy to
see that $\mu$ satisfies the finite additivity property, thus $\mu$ is a distribution on $S$.
\end{rem}

\begin{defn}\label{defn:cpvm1}\begin{itemize}
\item[(1)]\  For integers $l,j\geq 0$, and $a\in S$, we define the following
functions on $S$:
\begin{equation*}
\k(a,l;j;x)=
\begin{cases}
({x-a})^j, &\qquad\text{ if $x\in B_a(|\pi|^l)$, } \\
0,       &\qquad\text{ if $x\notin B_a(|\pi|^l)$, }
\end{cases}
\end{equation*}
for $B_a(|\pi|^l)\subset S$, provided that $a\ne \infty$, and
\begin{equation*}
\k(\infty,l;j;x)=
\begin{cases}
({\frac{1}{x}})^j, &\qquad\text{ if $x\in B_{\infty}(|\pi|^l)$, } \\
0,       &\qquad\text{ if $x\notin B_\infty(|\pi|^l)$, }
\end{cases}
\end{equation*}
for $B_{\infty}(|\pi|^l)\subset S$, where
$B_{\infty}(|\pi|^l)=\{x\in {\bk}_\infty:\,\, |x|\ge
|\pi^{-1}|^l\}$. And in what follows, we understand the term
$(x-a)^j$ on $B_a(|\pi|^l)$ needs to be replaced by $(1/x)^j$ if
$a=\infty$.
\item[(2)]\ The ${\C}_\infty$-vector spaces $P_l^{(n)}, n\geq 0$, are defined as:
\[
P_l^{(n)}=\sum\limits_{\substack{a\in S \\ 0\leq j\leq n}}
  {\C}_\infty\,\k(a,l;j;x),
\]
the vector space generated by all the functions
$\k(a,l;j;x)$, with $0\leq j\leq n, a\in S$, over ${\C}_\infty$. And we put
\[
P^{(n)}=\lim\limits_{\longrightarrow} P_l^{(n)},
\]
where the direct limit is taken with respect to the injective maps $P_l^{(n)}\to P_{l+1}^{(n)}$.
\par
\item[(3)]\ Denote
\[
P^{(\infty)}=\lim\limits_{\longrightarrow} P^{(n)}, \quad\text{ and } \quad
P_l^{(\infty)}=\lim\limits_{\longrightarrow} P_l^{(n)},
\]
where the direct limits are taken over the injective maps $P^{(n)}\to P^{(n+1)}$ and
$P_l^{(n)}\to P_l^{(n+1)}$ respectively.
$P^{(\infty)}$ is in fact the space of all locally polynomial functions over
$S$, and $P_l^{(\infty)}$ those defined for balls of radius $|\pi|^l$ (which are contained in $S$).
\end{itemize}
\end{defn}

As remark \ref{rem:cpvm1} indicates, an element $\mu\in LA(S)^*$ is a distribution on $S$, thus it
is natural to write
\[
\int_S f(x)d\mu(x):= (f,\mu)
\]
for $f\in LA(S)$. And we also write for an open compact subset $U\subset S$
\[
\int_U f(x) d\mu(x) = \int_S f(x)\xi_U(x) d\mu(x)
\]
provided that $f(x)$ is locally analytic over $U$, and we extend $f$
by $0$ outside of $U$ in
the second integral. We notice that for $\mu\in LA(S)^*$,
the set $\{\mu(U)=(\xi_U(x),\mu):\,\,
U\subset S \text{ open compact}\}$ does not determine $\mu$. Instead we have

\begin{prop}\label{prop:cpvm1} \begin{itemize}
\item[(1)]\ $\mu\in (P_l^{(\infty)})^*=Hom_{{\C}_\infty}(P_l^{(\infty)},{\C}_\infty)$ can be extended
to an element of $(LA_l(S))^*$ if and only if
\[
\left|\int_{B_a(|\pi|^l)} \k(a,l;j;x)d\mu(x)\right|
=\left|\int_S \k(a,l;j;x)d\mu(x)\right|
\leq C\cdot |\pi|^{lj},
\]
for any $j\geq 0$ and any $B_a(|\pi|^l)\subset S$, where $C$ is a constant depending only on $\mu$.
\par
\item[(2)]\ $\mu\in (P^{(\infty)})^*=Hom_{{\C}_\infty}(P^{(\infty)},{\C}_\infty)$ can be extended
to an element of $(LA(S))^*$ if and only if
\[
\left|\int_{B_a(|\pi|^l)} \k(a,l;j;x)d\mu(x)\right|
=\left|\int_S \k(a,l;j;x)d\mu(x)\right|
\leq C(l)\cdot |\pi|^{lj},
\]
for any sufficiently large integer $l$, any $j\geq 0$, and any $B_a(|\pi|^l)\subset S$,
where $C(l)$ is a constant depending only on $l$ and $\mu$.
\end{itemize}
\end{prop}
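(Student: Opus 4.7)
\medskip

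\noindent\textbf{Proof proposal.} The plan is to recognise the functions $\k(a,l;j;x)$ as essentially a Schauder basis (indexed by balls of radius $|\pi|^l$ in a fixed decomposition of $S$ and by $j\ge 0$) of the Banach space $LA_l(S)$, and then derive the estimates from the definition of the norm $\|\cdot\|$ in \eqref{e:cpvm4}.

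\smallskip

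First I would compute the norm of the basis element $\k(a,l;j;x)$. Fixing a decomposition $S=\bigsqcup_i B_i$ into closed balls of radius $|\pi|^l$ with $a\in B_{i_0}$, the function $\k(a,l;j;\cdot)$ vanishes on $B_i$ for $i\ne i_0$, while on $B_{i_0}$ it equals $(x-a)^j=\pi^{lj}\bigl((x-a)/\pi^l\bigr)^j$. Hence its only nonzero Taylor coefficient in the expansion \eqref{e:cpvm3} at the center $a$ is $c_j=\pi^{lj}$, so by Lemma \ref{lemma:cpvm1} and \eqref{e:cpvm4}
\[
\|\k(a,l;j;\cdot)\|=|\pi|^{lj}.
\]
The analogous computation, with $x$ replaced by $1/x$, handles the ball centered at $\infty$. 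This immediately gives the necessity direction of (1): if $\mu\in LA_l(S)^*$, then $|\mu(\k(a,l;j;\cdot))|\le \|\mu\|\cdot|\pi|^{lj}$, and we may take $C=\|\mu\|$.

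\smallskip

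For the sufficiency direction of (1), I would show that any $f\in LA_l(S)$ admits an expansion in the $\k$'s that converges in the Banach topology. Fix a decomposition $S=\bigsqcup_i B_i$ with $B_i=B_{a_i}(|\pi|^l)$ and write the Taylor expansion \eqref{e:cpvm3} of $f$ at $a_i$; then on $B_i$
\[
f|_{B_i}(x)=\sum_{n\ge 0} c_{in}\pi^{-ln}(x-a_i)^n
=\sum_{n\ge 0} c_{in}\pi^{-ln}\k(a_i,l;n;x),
\]
and since $c_{in}\to 0$ as $n\to\infty$, the partial sums converge to $f$ in $LA_l(S)$. Assuming the bound $|\mu(\k(a_i,l;n;\cdot))|\le C|\pi|^{ln}$, define
\[
\widetilde\mu(f)=\sum_{i}\sum_{n\ge 0} c_{in}\pi^{-ln}\,\mu\bigl(\k(a_i,l;n;\cdot)\bigr).
\]
Each summand has absolute value $\le C|c_{in}|\le C\|f\|$, so the double series converges and $|\widetilde\mu(f)|\le C\|f\|$, exhibiting $\widetilde\mu\in LA_l(S)^*$. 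I would then check that $\widetilde\mu=\mu$ on $P_l^{(\infty)}$ by evaluating both sides on each $\k(a,l;j;\cdot)$, which by linearity and the density of the span of the $\k$'s in $P_l^{(\infty)}$ pins the extension uniquely; independence from the choice of decomposition and of the centers $a_i$ follows from Lemma \ref{lemma:cpvm1}.

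\smallskip

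Part (2) is then a formal consequence of the direct limit topology. By the remark preceding Remark \ref{rem:cpvm0.5}, a linear functional on $LA(S)=\varinjlim LA_l(S)$ is continuous if and only if its restriction to $LA_l(S)$ is continuous for every sufficiently large $l$. Given $\mu\in (P^{(\infty)})^*$, its restrictions $\mu|_{P_l^{(\infty)}}$ are defined for all $l$; by part (1) each such restriction extends to $LA_l(S)^*$ precisely when the bound of (2) holds at level $l$ with constant $C(l)$. The extensions are compatible under $LA_l(S)\hookrightarrow LA_{l+1}(S)$ because both agree with $\mu$ on the dense subspace $P_l^{(\infty)}$, and patching them yields a continuous extension to $LA(S)$. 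The main technical nuisance will be the careful bookkeeping for the ball centered at $\infty$ (where $(x-a)^j$ is replaced by $(1/x)^j$), but this is handled uniformly once one adopts the convention stated in Definition \ref{defn:cpvm1}; no new ideas are required for that case.
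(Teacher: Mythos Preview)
Your proposal is correct and follows essentially the same approach as the paper: compute $\|\k(a,l;j;\cdot)\|=|\pi|^{lj}$ to obtain necessity, then expand $f\in LA_l(S)$ ball-by-ball as $\sum_i\sum_n c_{in}\pi^{-ln}\k(a_i,l;n;\cdot)$ and use the bound to define a convergent, bounded extension; part (2) then follows from the direct-limit topology. Your write-up is a bit more explicit about well-definedness and compatibility of the extensions than the paper's, but the argument is the same.
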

\begin{proof}
Since the topology on $LA(S)$ is induced from those of all
$LA_l(S)$'s, we need only prove (1). Suppose $\mu\in
(P_l^{(\infty)})^*$ can be extended to an element of $LA_l(S)$,
which is still denoted by $\mu$. By remark \ref{rem:cpvm0.5},
\[
\begin{split}
&\left|\int_{B_a(|\pi|^l)}
\k(a,l;j;x)d\mu(x)\right|=\left|\int_{B_a(|\pi|^l)}(x-a)^j d\mu(x)\right|\\
&\le ||\mu||\cdot |\pi|^{lj}\cdot
\left|\left|\left(\frac{x-a}{\pi^l}\right)^j\right|\right|\\
&\le ||\mu||\cdot |\pi|^{lj},
\end{split}
\]
if ${\infty}\not\in B_a(|\pi|^l)$. And similarly if ${\infty}\in
B_a(|\pi|^l)$ (therefore $a=\infty$ by the assumption on the notation
$B_a(|\pi|^l)$).

Conversely, suppose the condition holds, and $f\in LA_l(S)$. Then we
decompose $S=\bigsqcup_i B_i$ as a finite disjoint union of closed
balls of radius $|\pi|^l$. On each $B_i$, the function $f$ can be
expanded as
\[
f|_{B_i}(x)=\sum\limits_{n\ge
0}c_{in}\left(\frac{x-a_i}{\pi^l}\right)^n, \quad \text{ with
$a_i\in B_i$, and $c_{in}\to 0$ as $n\to \infty$}
\]
(if $\infty\in B_i$, the function $f(x)=g(1/x)$ for some analytic
function $g$ on $B_0(|\pi|^l)$, and we get the expansion of $f$ in terms
of the parameter $1/x$ by the expansion of $g$, then we proceed similarly).
Therefore we extend $\mu\in (P_l^{(\infty)})^*$ by defining
\[
(f,\mu)=\sum\limits_i\sum\limits_{n\ge 0} {\pi^{-ln}}c_{in}
\int_{B_i} (x-a_i)^nd\mu(x),
\]
which is convergent under the given assumption. The assumption on
$\mu\in (P_l^{(\infty)})^*$ also implies that the extension of $\mu$ to
$LA_l(S)$ is continuous.
\end{proof}

\begin{rem}
In Proposition \ref{prop:cpvm1}, the estimate on the condition for
$\mu\in (P^{(\infty)})^*$ to
be extendable to an element of $(LA(S))^*$
is based on the topology of the space of locally analytic functions.
In the following definition, we are going to specify the constant
$C(l)$ in (2) of Proposition \ref{prop:cpvm1} and define the
``$h$-admissible" measures, which will be proved dual to the $C^h$-functions.
\end{rem}

\begin{defn}\label{defn:cpvm2}
For a non-negative integer $h$, a linear functional $\mu\in (P^{(h)})^*$ is called an $h$-admissible
measure on $S$, if it satisfies the following growth condition:
\begin{equation}\label{e:cpvm5}
\left|\int_{B_a(|\pi|^l) } (x-a)^jd\mu(x)\right| \leq C\cdot |\pi|
^{l(j-h)}
\end{equation}
for $0\leq j\leq h$, any $l$ sufficiently large, and any $B_a(|\pi|^l)\subset S$,
where $C$ is a constant depending only on $\mu$. If $a=\infty$, then
(\ref{e:cpvm5}) should be understood as
\begin{equation*}
\left|\int_{B_a(|\pi|^l) } \left(\frac{1}{x}\right)^jd\mu(x)\right| \leq C\cdot |\pi|
^{l(j-h)}.
\end{equation*}
\end{defn}

\begin{rem}\label{rem:cpvm4}
This definition is a little bit different from~\cite{Vi1976}
and~\cite{Vi1985} by Vishik. The right side
of (\ref{e:cpvm5}) is $o(1)\cdot |\pi|^{l(j-h)}$ in Vishik's papers.
This change is made in order to relate the $h$-admissible measures with $C^h$-functions on $S$
instead of functions satisfying the ``$h$-th order Lipschitz'' condition.
\end{rem}
It is clear from the above definition that the $0$-admissible measures
on $S$ are exactly
the measures on $S$. In general, the $h$-admissible measures on $S$
are dual to the space of
$C^h$ functions on $S$. Since $\bk_\infty$ is of characteristic $p>0$,
the definition of differentiability
of a function is a little different to the usual one in the case of
characteristic $0$. At first, we
consider the functions on closed balls of ${\P1}({\bk}_\infty)$
and follow Schikhof's definition of $C^n$-functions \cite{Sc1984}.

\begin{defn}\label{defn:cpvm3}
Let $B\subset {\bk}_\infty$ be a closed ball of positive radius. For an integer $n>0$, set
\[
\GD^n B=\{(x_1,x_2,\cdots,x_n)\in B^n: x_i\neq x_j \text{ if $i\neq j$ } \}.
\]
The $n$-th order difference quotient $\Phi_n f: \GD^{n+1}{B} \to {\C}_\infty$
of a function $f: {B} \to {\C}_\infty$ is inductively defined by
\begin{equation*}
\begin{split}
&\ \Phi_0 f = f, \\
&\ (\Phi_n f)(x_1,x_2,\cdots,x_{n+1})=
              \frac{(\Phi_{n-1} f)(x_1,x_3,\cdots,x_{n+1})-
                   (\Phi_{n-1} f)(x_2,x_3,\cdots,x_{n+1})} {x_1-x_2}
\end{split}
\end{equation*}
for $n\geq 1$ and $ (x_1,x_2,\cdots,x_{n+1}) \in \GD^{n+1}{B}$. The function $f$ is
called a $C^n$-function on ${B}$ if $\Phi_n f$ can be extended to a continuous
function $\overline{\Phi_n f}: B^{n+1} \to {\C}_\infty $.
For $B=B_{\infty}(|\pi|^{l})$, we
change the parameter of $B$ by the transform $\phi: x\to 1/x$ and make a similar definition.
Since $\phi$ is one-to-one and continuous on $B$, the definition of
differentiability of functions
on $B'\subset B_{\infty}(|\pi|^{l})$ for $B'$ a closed ball not containing
$\infty$ agrees with
that on $B_{\infty}(|\pi|^{l})$. For convenience,
it is understood throughout this paper
that such a transform is automatically applied to change the
parameter whenever the closed ball $B$ is centered at $\infty$.
\end{defn}

The space of
$C^n$-functions from $B$ to ${\C}_\infty$ is denoted by $C^n(B)$.
For $f\in C^n(B)$ and $x\in B$, set
\[
D_n f(x) = \overline{\Phi_n f}(x,x,\cdots,x) \text{ for $n>0$ }, \quad\text{ and }\quad
D_0 f(x) = f(x).
\]
$D_n f$ is the $n$-th order hyper-derivative of the function $f$. We
have the following characterization of $C^n$-functions on
$B$, in terms of Taylor expansions:
\begin{theorem}[Section 29 \& Section 83, \cite{Sc1984}]\label{thm:cpvm1}
Let $B$ be a closed ball of positive radius. If $f \in C^n(B)$, then for all $x,y \in B$,
\begin{equation*}
\begin{split}
f(x)&=f(y)+\sum_{j=1}^{n-1}(x-y)^jD_jf(y)+(x-y)^n\overline{(\Phi_nf)}
       (x,y,\cdots,y)\\
    &=f(y)+\sum_{j=1}^{n}(x-y)^jD_jf(y)+(x-y)^n\bigl (\overline{(\Phi_nf)}
       (x,y,\cdots,y)-D_nf(y) \bigr ).
\end{split}
\end{equation*}
Conversely, let $f$ be a function on $B$. If there exist
continuous functions $\l_1, \cdots, \l_{n-1}: B \to {\C}_\infty$ and
$\Lambda_n: B\times B \to {\C}_\infty$ such that
\[
f(x)=f(y)+\sum_{j=1}^{n-1}(x-y)^j\l_j(y)+(x-y)^n\Lambda_n(x,y)
  \qquad (x,y \in {\A_\infty}),
\]
then $f\in C^n(B)$.
\end{theorem}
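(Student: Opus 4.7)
The plan is to handle the two directions separately; the forward implication is a straightforward induction, while the converse requires more care in extracting continuous extensions of all intermediate difference quotients.

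For the forward direction, assume $f\in C^n(B)$ and proceed by induction on $n$. The base case $n=1$ is immediate: the definition of $\Phi_1 f$ gives $f(x)=f(y)+(x-y)\Phi_1 f(x,y)$, and continuity lets us replace $\Phi_1 f$ by $\overline{\Phi_1 f}$. For the inductive step, the key identity is that the recursive definition of $\Phi_n f$, after passing to continuous extensions on the diagonal, yields
\[
\overline{\Phi_{n-1}f}(x,y,\ldots,y) - D_{n-1}f(y) = (x-y)\overline{\Phi_n f}(x,y,\ldots,y),
\]
obtained by letting the second argument of $\Phi_n f$ tend to $y$ in its defining quotient. Substituting this into the inductive hypothesis for $n-1$ gives the first stated Taylor-type formula, and the second formula follows by adding and subtracting the term $(x-y)^n D_n f(y)$.

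For the converse, suppose $f$ admits the given expansion with continuous $\l_1,\dots,\l_{n-1}$ and $\Lambda_n$. The goal is to show inductively that each iterated difference quotient $\Phi_k f$ for $1\le k\le n$ extends continuously to $B^{k+1}$. The cleanest route is via the divided-difference identity
\[
\Phi_k f(x_1,\dots,x_{k+1}) = \sum_{i=1}^{k+1}\frac{f(x_i)}{\prod_{j\ne i}(x_i-x_j)},
\]
into which one substitutes the given expansion of $f$ evaluated at each $x_i$ around a convenient base point (for instance $y=x_{k+1}$). The polynomial pieces $\sum_{j=1}^{n-1}(x_i-y)^j \l_j(y)$ contribute divided differences of polynomials, which are elementary symmetric expressions vanishing for $j<k$ and giving an explicit continuous expression for $j\ge k$; the remainder piece $(x_i-y)^n\Lambda_n(x_i,y)$ contributes a sum divisible by enough factors of $(x_i-x_j)$ to produce a continuous extension controlled by $\Lambda_n$. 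Carrying this out shows that $D_j f(y)=\l_j(y)$ for $1\le j\le n-1$ and that $\overline{\Phi_n f}$ is continuous on $B^{n+1}$, hence $f\in C^n(B)$.

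The main obstacle is the bookkeeping in the converse direction: one must verify that when the divided-difference formula is applied to the remainder term $(x-y)^n\Lambda_n(x,y)$, the apparent singularities from the denominators $\prod_{j\ne i}(x_i-x_j)$ actually cancel, leaving a jointly continuous function of $(x_1,\dots,x_{n+1})$. This cancellation is automatic for the polynomial portion of the expansion but requires a careful symmetrization argument (or an equivalent inductive reformulation) for the $\Lambda_n$ term. The case where the ball is $B_\infty(|\pi|^l)$ is handled uniformly by the change of parameter $x\mapsto 1/x$ fixed in Definition \ref{defn:cpvm3}, so no separate argument is needed. The full technical details are carried out in Sections 29 and 83 of \cite{Sc1984}.
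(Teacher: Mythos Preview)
The paper does not prove this theorem at all: it is stated with attribution to Sections 29 and 83 of Schikhof's book \cite{Sc1984} and then used as a black box. Your proposal therefore goes beyond what the paper offers, and your closing line correctly identifies where the full argument lives.

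As a brief comment on the sketch itself: the forward direction is fine and is exactly the standard induction on the recursive definition of $\Phi_n f$. For the converse, the divided-difference formula you quote is indeed the right tool, but your description of the remainder step is too optimistic. When you apply the $k$-th divided difference to the term $(x_i-y)^n\Lambda_n(x_i,y)$ with $y=x_{k+1}$, the function $\Lambda_n(\cdot,x_{k+1})$ is merely continuous, not a polynomial, so you cannot simply say the result is ``divisible by enough factors of $(x_i-x_j)$.'' What actually happens (and this is the content of Schikhof's Section 83) is that one writes the remainder as a product of a polynomial in the first variable with a continuous coefficient depending on the remaining variables, and then uses a Leibniz-type rule for divided differences of products together with the explicit divided differences of monomials. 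Your phrase ``careful symmetrization argument'' gestures at this but does not capture it; the identification $D_j f=\l_j$ also only follows \emph{after} one has established $f\in C^n(B)$, by uniqueness of the Taylor expansion, not as an intermediate step. Since the paper itself defers all of this to \cite{Sc1984}, your level of detail is already more than is required here.
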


$C^n(B)$ is a Banach space over ${\C}_\infty$ with the norm $||\cdot||_n$ defined by
\begin{equation}\label{e:cpvm6}
||f||_n=\max\left\{||\overline{(\Phi_0f)}||_{\infty},
||\overline{(\Phi_1f)}||_{\infty}, \cdots,
||\overline{(\Phi_nf)}||_{\infty}\right\} \quad\text{ for $f\in C^n({\A}_\infty)$ }
\end{equation}
where the functions $\overline{(\Phi_jf)}$ for $0\le j\le n$ are as in Definition \ref{defn:cpvm3},
and $||\cdot||_{\infty}$ is the sup norm of the functions on the respective topological spaces.
As the spaces $B^j=B\times B\times \cdots \times B$ ($j$ times,
$1\le j\le n+1$) are compact, the right side of (\ref{e:cpvm6}) is
finite. On the notation of the norm, we use $||\cdot||_{C^n(B)}$ for $||\cdot||_n$ if any confusion
may occur.

In the case $B={\A}_\infty$. The functions in $C^n({\A}_\infty)$ (as well as $LA_n({\A_\infty})$ and $LA({\A_\infty})$)
can be described by the Carlitz basis $\{G_n(x)\}_{n\ge 0}$, where $G_n(x)$ is a polynomial of degree $n$
for each integer $n\ge 0$, they are defined in the following. Set
\begin{itemize}
\item $[i]={\pi}^{q^i}-\pi$ for $i$ positive integer;
\item $L_i=1$ if $i=0$; and $L_i=[i]\cdot [i-1]\cdots[1]$ \ if $i$ is
      a positive integer;
\item $D_i=1$ if $i=0$; and $D_i=[i]\cdot [i-1]^q\cdots [1]^{q^{i-1}}$
      \ if $i$ is a positive integer;
\item $e_i(x) = x$, if $i=0$; $e_i(x) = \prod_{{\a}\in{\Fq}[\pi],\, \deg_{\pi}(\a)<i}(x-\a)$,
    if $i$ is a positive integer;
\item $E_i(x)= e_i(x)/D_i$, for each non-negative integer $i$.
\end{itemize}
Thus we see that $e_i(x)$ and $E_i(x)$ are ${\Fq}$-linear polynomials of degree $q^i$.
For any non-negative integer $n$, write $n$ in $q$-digit
expansion: $n=n_0+n_1\,q+\cdots+n_s\, q^s$ with $0\leq n_i<q$, the Carlitz polynomial $G_n(x)$ is defined by
\[
G_n(x)=\prod_{i=0}^s\left(E_i(x)\right)^{n_i}.
\]


\begin{theorem}[Wagner~\cite{Wa1971}, also see \cite{Go1989} or
\cite{Ya2004}]\label{thm:cpvm2}
The Carlitz polynomials $G_n(x)$ ($n\ge 0$) constitute an
orthonormal basis of the Banach space $C({\A}_\infty)$ of continuous
functions from ${\A}_\infty$ to ${\C}_\infty$ with the sup norm,
that is, any $f\in C({\A}_\infty)$ can be expressed as
\[
f(x)=\sum\limits_{n=0}^\infty a_n G_n(x), \quad\text{ with
$a_n\in{\C_\infty}$, and $a_n\to 0$ as $n\to \infty$}.
\]
And the norm $||f||$ (which is also denoted by $||f||_0$ in
(\ref{e:cpvm6})) is given by
\[
||f||:=\sup\limits_{x\in {\A_\infty}}\{|f(x)|\}=\max\limits_{n\ge
0}\{|a_n|\}.
\]
\end{theorem}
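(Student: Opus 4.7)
The plan is to mimic Mahler's theorem for $C(\mathbb{Z}_p)$ in the function field setting, exploiting the density of $\Fq[\pi]$ in $\A_\infty$ (the analogue of $\mathbb{Z}\subset\mathbb{Z}_p$). The central device is the explicit enumeration $\{\beta_m\}_{m\ge 0}$ of $\Fq[\pi]$ by $q$-digit expansions: if $m=m_0+m_1 q+\cdots+m_s q^s$ with $0\le m_i<q$, set $\beta_m=\sum_i m_i\pi^i$; the pairing $(n,m)\mapsto G_n(\beta_m)$ will play the role of the pairing $(n,m)\mapsto \binom{m}{n}$ in Mahler's theorem.

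First I would verify the elementary properties of $e_i$ and $E_i$. Since $e_i(x)=\prod_{\alpha\in V_i}(x-\alpha)$, where $V_i\subset\Fq[\pi]$ is the additive subgroup of polynomials of $\pi$-degree less than $i$, the polynomial $e_i$ is $\Fq$-linear of degree $q^i$, vanishes on $V_i$, and a direct computation at $x=\pi^i$ gives $e_i(\pi^i)=D_i$, so $E_i(\pi^i)=1$. The same product formula shows $|E_i(x)|\le 1$ for all $x\in \A_\infty$ by writing $x=\xi+\pi^i y$ with $\xi\in V_i$, reducing via $\Fq$-linearity to $E_i(\pi^i y)$, and bounding via the explicit product. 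Consequently $||G_n||\le 1$ for every $n$.

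Second comes the digit-principle evaluation. Using $E_i(\pi^j)=0$ for $j<i$, $E_i(\pi^i)=1$, and the $\Fq$-linearity of $E_i$, the product expansion $G_n(\beta_m)=\prod_i E_i(\beta_m)^{n_i}$ exhibits a triangular pattern: $G_n(\beta_n)=1$, while $G_n(\beta_m)=0$ whenever some digit $m_i$ is strictly smaller than $n_i$. This yields orthonormality $||G_n||=|G_n(\beta_n)|=1$, and the matrix $\bigl(G_n(\beta_m)\bigr)$ is unitriangular with respect to the componentwise partial order on $q$-digit multi-indices. Its inversion produces explicit universal formulas $a_n=\sum_m c_{n,m}f(\beta_m)$ (a finite sum for each $n$) with $|c_{n,m}|\le 1$, so that $|a_n|\le \max_m|f(\beta_m)|\le ||f||$ for any candidate expansion.

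Finally, for $f\in C(\A_\infty)$ I would define $a_n=a_n(f)$ by the above inversion formula and show that $\sum_n a_n G_n$ converges uniformly to $f$. The decay $a_n\to 0$ follows from uniform continuity of $f$ on the compact set $\A_\infty$: the inversion formula for $a_n$ is essentially a higher-order finite difference of $f$ evaluated at points $\beta_m$ that, for $n$ with highest nonzero $q$-digit at position $s$, cluster within distance $|\pi|^s$, so uniform continuity forces these differences to shrink. Once $a_n\to 0$ is established, orthonormality gives uniform convergence of $\sum a_n G_n$ to some $g\in C(\A_\infty)$ that agrees with $f$ on the dense set $\{\beta_m\}$ by construction, whence $g=f$. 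Combining $|a_n|\le ||f||$ with the reverse bound $||f||\le \max_n|a_n|\cdot ||G_n||=\max_n|a_n|$ yields the norm formula. The main obstacle is the digit-principle evaluation in step two, together with the finite-difference decay estimate: these combinatorial computations of how $E_i^{n_i}$ interacts with the digit structure of $\beta_m$ are the technical heart of the theorem, with everything else being formal.
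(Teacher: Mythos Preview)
The paper does not supply a proof of this theorem; it is quoted from Wagner with further references, so there is no in-paper argument to compare against. Your Mahler-style strategy is the standard one and is essentially Wagner's, but the ``digit-principle evaluation'' in your second step contains concrete errors that break the argument as written.

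You assert that $G_n(\beta_n)=1$ and that $G_n(\beta_m)=0$ whenever some digit satisfies $m_i<n_i$. Neither holds. From $\Fq$-linearity and $E_i(\pi^j)=0$ for $j<i$ one gets
\[
E_i(\beta_m)=\sum_{j\ge i}m_j\,E_i(\pi^j)=m_i+\sum_{j>i}m_j\,E_i(\pi^j),
\]
and a short valuation count (compare $e_i(\pi^j)$ with $e_i(\pi^i)$ factor by factor) gives $|E_i(\pi^j)|=|\pi|^{j-i}$ for $j\ge i$. Thus the tail has absolute value $<1$ but is \emph{not} zero, so only the congruence $G_n(\beta_m)\equiv\prod_i m_i^{\,n_i}\pmod{\pi}$ is available. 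Concretely, for $q=2$, $n=3$, $m=2$ one finds $G_3(\beta_2)=E_0(\pi)E_1(\pi)=\pi\ne 0$ even though $m_0=0<1=n_0$; and for $q=5$, $n=2$ one has $G_2(\beta_2)=\beta_2^{\,2}=4\ne 1$. So the evaluation matrix $(G_n(\beta_m))$ is not unitriangular in either the total or the digit-partial order, and your inversion formula based on that structure is not available.

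What does survive is that $(G_n(\beta_m))_{0\le n,m<q^N}$ lies in ${\rm GL}_{q^N}({\A}_\infty)$ for every $N$: reducing modulo $\pi$ gives the tensor product $\bigotimes_{i<N}\bigl(m_i^{\,n_i}\bigr)_{0\le n_i,m_i<q}$, and each factor is the Vandermonde-type matrix at the $q$ distinct nodes of $\Fq$, hence invertible over $\Fq$. This still yields integral inversion coefficients with $|c_{n,m}|\le 1$, the bound $|a_n|\le\|f\|$, and then the remainder of your outline (decay of $a_n$ from uniform continuity, agreement on the dense set $\{\beta_m\}$, the norm formula) goes through. So the gap is repairable, but the unitriangularity you invoke must be replaced either by this Vandermonde/tensor-product argument or by Conrad's digit principle, which deduces the general statement from the much easier $\Fq$-linear case.
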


\begin{theorem}[\cite{Ya1998}, \cite{Ya2004}]\label{thm:cpvm3}
For integers $j, l\ge 0$, let $\mu_{j,\,l}=\sum_{i=l+1}^\infty
[j/q^i]$.
\begin{itemize}
\item[(1)] The polynomials $\pi^{\mu_{j,\,l}}\, G_j(x)$ with $j\ge 0$ constitute an orthonormal basis of the Banach
space $LA_l({\A}_\infty)$, that is, any $f\in LA_l({\A}_\infty)$ can be expanded as
$f(x)=\sum_{j=0}^\infty a_j \pi^{\mu_{j,\,l}}\, G_j(x)$ with $a_j\to 0$ as $j\to \infty$,
and $||f||_{LA_l({\A}_\infty)}=\max_{j} \{|a_j|\}$.
\item[(2)] Let $f(x)=\sum_{j=0}^\infty a_j\, G_j(x)$ be a continuous function from ${\A}_\infty$
to ${\C}_\infty$. Then $f\in C^n({\A}_\infty)$ if and only if $\lim_{j\to\infty} |a_j|j^{\,n}=0$.
\end{itemize}
\end{theorem}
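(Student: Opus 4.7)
The plan is to reduce both parts to Wagner's theorem (Theorem \ref{thm:cpvm2}) by a careful analysis of how the Carlitz polynomials behave when localized to a ball of radius $|\pi|^l$. The fundamental technical input is the following key lemma: for each $j \geq 0$, integer $l \geq 0$, and $a \in {\A}_\infty$, expand
\[
G_j(a + \pi^l y) = \sum_{k=0}^{j} c_{jk}(a,l)\, y^k \qquad (y \in {\A}_\infty),
\]
and show that $\max_k |c_{jk}(a,l)| = |\pi|^{-\mu_{j,l}}$. The normalizing factor $\pi^{\mu_{j,l}}$ is engineered precisely to make the rescaled polynomial $\pi^{\mu_{j,l}} G_j(a + \pi^l y)$ have all coefficients of absolute value $\leq 1$ with at least one of absolute value equal to $1$. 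To carry this out, I would exploit the multiplicative factorization $G_j = \prod_{i \geq 0} E_i^{n_i}$ coming from the digit expansion $j = \sum n_i q^i$, combined with the explicit product $e_i(x) = \prod_{\deg_\pi \alpha < i}(x-\alpha)$ and the valuation formula for $D_i = \prod_{k=0}^{i-1}[i-k]^{q^k}$. The ${\Fq}$-linearity of $e_i$ ensures that on a ball $B_a(|\pi|^l)$, the factors $(x-\alpha)$ split naturally into a ``near'' group (those $\alpha$ in the same residue class as $a$ mod $\pi^l$) and a ``far'' group, and summing the contributions gives exactly $\mu_{j,l} = \sum_{i>l}[j/q^i]$.

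Granted the lemma, part (1) follows quickly. Each $\pi^{\mu_{j,l}} G_j$ is a unit vector in $LA_l({\A}_\infty)$ by direct inspection of the Taylor expansions on each ball of radius $|\pi|^l$ of a finite decomposition of ${\A}_\infty$. Orthonormality of the family, i.e., $||\sum a_j \pi^{\mu_{j,l}} G_j||_{LA_l} = \max_j |a_j|$, follows from the ultrametric triangle inequality in one direction; in the other direction, one recovers each $a_j$ via Wagner's expansion, which is valid since $LA_l({\A}_\infty) \subset C({\A}_\infty)$. Density of the span is obtained by taking any $f \in LA_l({\A}_\infty)$, expanding it via Wagner as $f = \sum b_j G_j$ with $b_j \to 0$ in $C({\A}_\infty)$, and verifying from the key lemma that $b_j / \pi^{\mu_{j,l}} \to 0$ in ${\C}_\infty$, so the partial sums converge in $LA_l$.

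For part (2), the approach is to use the Taylor expansion characterization of $C^n$-functions from Theorem \ref{thm:cpvm1}. For the ``only if'' direction, one computes or estimates the hyper-derivatives $D_k G_j$ for $0 \leq k \leq n$ and bounds them in terms of $j$; the growth of $|D_n G_j|$ is controlled by powers of $[i]$'s that aggregate to approximately $j^n$ in absolute value (up to the $q$-digit structure). The sufficient direction is routine: if $|a_j| j^n \to 0$ then the series $\sum a_j G_j$ can be differentiated term by term up to order $n$, with continuous extensions of the difference quotients $\Phi_k$ assembling from the term-wise ones via uniform convergence. The necessary direction is the converse: the continuity of $\overline{\Phi_n f}$ on ${\A}_\infty^{n+1}$ combined with Wagner's orthonormality recovers $a_j$ as an integral-like pairing against $f$ and forces the claimed decay.

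The main obstacle is the key lemma itself: the combinatorial valuation count on ${G_j(a+\pi^l y)}$ that must match $\mu_{j,l}$ exactly. This requires careful bookkeeping of the digit expansion of $j$ in base $q$, the Lucas-type interaction between these digits and the factorial-like quantities $D_i$, and a clean separation of ``near'' versus ``far'' factors of $e_i$ on balls of radius $|\pi|^l$. Once this valuation calculation is nailed down, both (1) and (2) fall out by essentially formal manipulations on top of Wagner's theorem.
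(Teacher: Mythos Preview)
The paper does not prove this theorem at all: Theorem~\ref{thm:cpvm3} is stated with a citation to the author's earlier papers \cite{Ya1998} and \cite{Ya2004} and is used as a black box throughout Section~\ref{cpvm}. There is therefore no proof in the present paper to compare your proposal against.

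That said, your sketch is broadly in line with how the results are established in those references. The key lemma you isolate---that $\pi^{\mu_{j,l}} G_j$ has $LA_l$-norm exactly $1$, obtained by computing the valuation of the Taylor coefficients of $G_j(a+\pi^l y)$ via the digit factorization $G_j=\prod E_i^{n_i}$ and the explicit valuations of the $D_i$---is indeed the heart of part~(1), and your reduction to Wagner's theorem is the right framework. For part~(2) your outline is vaguer: the phrase ``aggregate to approximately $j^n$'' hides the actual work, which is a precise estimate on the $n$-th difference quotients $\Phi_n G_j$ (not just the hyper-derivatives $D_n G_j$) showing that $\|\Phi_n G_j\|_\infty$ is comparable to $q^{n\lfloor\log_q j\rfloor}$. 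This is what Remark~\ref{rem:cpvm5} alludes to when it says the equivalence of the two norms on $C^n({\A}_\infty)$ is carried out in the proof of Theorem~5.1 of \cite{Ya2004}. Your proposal would need to fill in that difference-quotient computation to be complete, but the strategy is sound.
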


From (2) of the above theorem, we can define a norm $||\cdot||'_n$ on $C^n({\A}_\infty)$ by
\begin{equation*}
||f||'_n:= \max\limits_{j} \{|a_j|\cdot |\pi^{-n[\log_q j\,]}\} \quad\text{
for } f(x)=\sum\limits_{j\ge 0} a_j\, G_j(x) \in C^n({\A}_\infty)
\end{equation*}
so that $(C^n({\A}_\infty), ||\cdot||'_n)$ is a Banach space over ${\C}_\infty$.

\begin{rem}\label{rem:cpvm5}
The norm $||\cdot||'_n$ and the norm $||\cdot||_n$ defined in
(\ref{e:cpvm6}) (take $B={\A}_\infty$) on $C^n({\A}_\infty)$ are
equivalent, that is, there exist constants $\lambda_1>0$ and
$\lambda_2>0$ such that $\lambda_1||\cdot||_n \le ||\cdot||'_n \le
\lambda_2 ||\cdot||_n$. This can be seen through the computation of
the difference quotients of the Carlitz polynomials $G_n(x)$, which
is carried out in the proof of Theorem 5.1 in \cite{Ya2004}.
\end{rem}

Due to the above remark, we will use the same notation $||\cdot||_n$ for the two norms $||\cdot||_n$
and $||\cdot||'_n$ on $C^n({\A}_\infty)$.

\begin{cor}\label{cor:cpvm1}
For $n\ge 0$ an integer,  $(C^n({\A}_\infty), ||\cdot||_n)$ is a ${\C}_\infty$-Banach
space with an orthonormal basis $\{\pi^{n[\log_q j\,]}G_j(x)\}_{j\ge 0}$.
\end{cor}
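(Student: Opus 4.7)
My plan is to deduce the corollary directly from part~(2) of Theorem~\ref{thm:cpvm3} together with the norm equivalence stated in Remark~\ref{rem:cpvm5}. The starting point is that every continuous $f:{\A}_\infty\to{\C}_\infty$ has a unique Carlitz expansion $f(x)=\sum_{j\ge 0} a_j G_j(x)$ by Theorem~\ref{thm:cpvm2}, and the characterization $|a_j|\,j^n\to 0$ from Theorem~\ref{thm:cpvm3}(2) singles out exactly those $f$ that lie in $C^n({\A}_\infty)$.

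I would then change basis from $\{G_j\}$ to $\{h_j := \pi^{n[\log_q j]} G_j\}_{j\ge 0}$ (with the convention $[\log_q 0]:=0$), writing $f=\sum_{j\ge 0} b_j h_j$ with $b_j=a_j\pi^{-n[\log_q j]}$. Since $q^{[\log_q j]}\le j<q^{[\log_q j]+1}$, the ratio $j^n/q^{n[\log_q j]}$ stays between $1$ and $q^n$, so the condition $|a_j|\,j^n\to 0$ is equivalent to $|b_j|=|a_j|\,q^{n[\log_q j]}\to 0$. Thus $f\in C^n({\A}_\infty)$ precisely when its coefficients in the new basis tend to $0$, which is the non-Archimedean analogue of a convergent orthonormal expansion.

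For orthonormality with respect to the norm $||\cdot||'_n$, I would simply invoke its definition: $||f||'_n=\max_j\{|a_j|\cdot q^{n[\log_q j]}\}=\max_j\{|b_j|\}$, which is precisely the statement that $\{h_j\}_{j\ge 0}$ is orthonormal for $||\cdot||'_n$. The Banach space property of $(C^n({\A}_\infty),\,||\cdot||'_n)$ then reduces to completeness of the space of ${\C}_\infty$-valued null sequences under the sup norm, which is standard. By Remark~\ref{rem:cpvm5} the norms $||\cdot||'_n$ and $||\cdot||_n$ are equivalent, so the basis statement transfers to $(C^n({\A}_\infty),\,||\cdot||_n)$ without change.

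I do not anticipate a genuine obstacle, since the bulk of the content is already packaged in Theorem~\ref{thm:cpvm3}(2) and Remark~\ref{rem:cpvm5}. The only minor technicalities are the elementary two-sided comparison between $j^n$ and $q^{n[\log_q j]}$, and fixing the $j=0$ term, where $G_0\equiv 1$ and the normalizing factor $\pi^{n[\log_q 0]}$ should be taken to be $1$.
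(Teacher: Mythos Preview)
Your proposal is correct and matches exactly what the paper intends: the corollary is stated without proof in the paper, positioned immediately after Theorem~\ref{thm:cpvm3} and Remark~\ref{rem:cpvm5}, and is meant to follow directly from the definition of $||\cdot||'_n$ together with the norm equivalence. Your only loose phrase is that ``the basis statement transfers'' under norm equivalence---strictly speaking orthonormality does not transfer between equivalent norms---but the paper itself resolves this by declaring just before the corollary that the two norms will henceforth share the same notation $||\cdot||_n$, so the orthonormality claim is really with respect to $||\cdot||'_n$.
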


On a closed ball $B=B_a(|\pi|^{l_0})$, a function $f$ can be expressed as
$f(x)=g((x-a)/{\pi^{l_0}})$ for some function $g$ on ${\A}_\infty$, and $f\in C^n(B)$ if and only if
$g\in C^n({\A}_\infty)$. Hence we get the following

\begin{cor}\label{cor:cpvm2}
Let $B=B_a(|\pi|^{l_0})$. For $n\ge 0$ an integer, $(C^n(B), ||\cdot||_n)$ is a ${\C}_\infty$-Banach
space with an orthonormal basis $\{\pi^{n[\log_q j\,]}\,G_j((x-a)/{\pi^{l_0}})\}_{j\ge 0}$.
\end{cor}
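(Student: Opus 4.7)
The plan is to reduce to Corollary~\ref{cor:cpvm1} via the affine change of variable $\phi:\A_\infty\to B$, $\phi(y)=\pi^{l_0}y+a$, which is a homeomorphism mapping $\A_\infty$ bijectively onto $B=B_a(|\pi|^{l_0})$ (in the case $a=\infty$ we first apply the transform $x\mapsto 1/x$ to replace $B_\infty(|\pi|^{l_0})$ by a closed ball of $\bk_\infty$ centered at $0$, as stipulated in Definition~\ref{defn:cpvm3}). Define the pullback $T:C^n(B)\to\{\text{functions on }\A_\infty\}$ by $(Tf)(y)=f(\phi(y))=f(\pi^{l_0}y+a)$; equivalently, $f(x)=(Tf)((x-a)/\pi^{l_0})$, which is precisely the bijection between functions on $B$ and functions on $\A_\infty$ described in the remark preceding the corollary.

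First I would verify that $T$ restricts to a bijection $C^n(B)\stackrel{\sim}{\to}C^n(\A_\infty)$. Setting $y_i=(x_i-a)/\pi^{l_0}$, a direct induction on $n$ using the definition of $\Phi_n$ yields the transformation formula
\[
(\Phi_n f)(x_1,\dots,x_{n+1})=\pi^{-n l_0}\,(\Phi_n(Tf))(y_1,\dots,y_{n+1}),
\]
so $\Phi_n f$ extends continuously to $B^{n+1}$ iff $\Phi_n(Tf)$ extends continuously to $\A_\infty^{n+1}$, giving the claimed bijection. Together with $T$ being $\C_\infty$-linear this proves $(C^n(B),\|\cdot\|_n)$ is a Banach space (the structure is inherited from $C^n(\A_\infty)$).

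Next I would transfer the basis. By Corollary~\ref{cor:cpvm1}, $\{\pi^{n[\log_q j]}G_j(y)\}_{j\ge 0}$ is an orthonormal basis of $C^n(\A_\infty)$ with respect to the Carlitz-coefficient norm $\|\cdot\|'_n$. Applying $T^{-1}$ sends this basis to $\{\pi^{n[\log_q j]}G_j((x-a)/\pi^{l_0})\}_{j\ge 0}\subset C^n(B)$. Any $f\in C^n(B)$ thus has a unique expansion $f(x)=\sum_{j\ge 0}a_j\,\pi^{n[\log_q j]}G_j((x-a)/\pi^{l_0})$ with $a_j\to 0$, and the transferred norm satisfies $\|f\|_n=\max_j|a_j|$, which is what it means for the family to be an orthonormal basis.

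The one point requiring care is reconciling two a priori distinct norms on $C^n(B)$: the intrinsic $\|\cdot\|_n$ defined in (\ref{e:cpvm6}) through sup norms of $\overline{\Phi_j f}$, and the norm pulled back from $\|\cdot\|'_n$ through $T$. The transformation formula above shows the pulled-back $\|\cdot\|_n$ in the sense of (\ref{e:cpvm6}) differs from $\|Tf\|_n$ on $\A_\infty$ only by the scaling factors $|\pi|^{-j l_0}$ for $0\le j\le n$, hence the two are equivalent on $C^n(B)$. Combined with Remark~\ref{rem:cpvm5}, which already identifies $\|\cdot\|_n$ with $\|\cdot\|'_n$ on $C^n(\A_\infty)$, we conclude that our orthonormality computation with respect to $\|\cdot\|'_n$ delivers an orthonormal basis for the equivalent (and by convention identified) norm $\|\cdot\|_n$ on $C^n(B)$. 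This equivalence of norms (rather than equality) is the only substantive obstacle, and it is handled by the same observation used to justify Remark~\ref{rem:cpvm5}.
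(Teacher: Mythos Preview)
Your proposal is correct and follows exactly the paper's approach: the paper's entire argument for this corollary is the one-sentence remark preceding it, namely that writing $f(x)=g((x-a)/\pi^{l_0})$ gives $f\in C^n(B)$ if and only if $g\in C^n(\A_\infty)$, whence the result is immediate from Corollary~\ref{cor:cpvm1}. You have simply spelled out this change-of-variable reduction in detail, including the transformation law $(\Phi_n f)=\pi^{-nl_0}(\Phi_n(Tf))$ and the resulting norm-equivalence issue, which the paper absorbs into the convention of Remark~\ref{rem:cpvm5} identifying $\|\cdot\|_n$ with $\|\cdot\|'_n$.
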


\begin{cor}\label{cor:cpvm3}
Let $B=B_a(|\pi|^{l_0})$. The polynomials $\pi^{\mu_{j,\,l}} G_j((x-a)/{\pi^{l_0}})$ with $j\ge 0$ constitute
an orthonormal basis of the Banach space $LA_l(B)$, where $\mu_{j,\,l}$ is as in Theorem \ref{thm:cpvm3}.
\end{cor}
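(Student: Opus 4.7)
The plan is to reduce the statement to Theorem~\ref{thm:cpvm3}(1) via the affine change of variables $y = (x-a)/\pi^{l_0}$, exactly as Corollary~\ref{cor:cpvm2} was reduced to the corresponding orthonormal basis result for $C^n(\A_\infty)$. The map $\phi: \A_\infty \to B$, $y \mapsto a + \pi^{l_0} y$, is a homeomorphism that carries a closed ball $B_\beta(|\pi|^m) \subset \A_\infty$ onto the closed ball $B_{a+\pi^{l_0}\beta}(|\pi|^{m+l_0}) \subset B$, so pullback along $\phi$ transports the local-analyticity structure in a controlled way.

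The first step would be to define the pullback operator $\Phi: f \mapsto (\Phi f)(y) = f(a + \pi^{l_0} y)$ and verify that it is a ${\C}_\infty$-linear isometric isomorphism from $LA_l(B)$ onto $LA_{l-l_0}(\A_\infty)$ (for $l \ge l_0$, which is forced by the definition of $LA_l(B)$ itself). Bijectivity and linearity are immediate. For the isometry, I would use the definition of the norm in (\ref{e:cpvm4}): for any closed ball $B_b(|\pi|^l) \subset B$ with Taylor expansion $f|_{B_b(|\pi|^l)}(x) = \sum_n c_n ((x-b)/\pi^l)^n$, the substitution $x = a + \pi^{l_0} y$ gives $(\Phi f)|_{B_{(b-a)/\pi^{l_0}}(|\pi|^{l-l_0})}(y) = \sum_n c_n ((y - (b-a)/\pi^{l_0})/\pi^{l-l_0})^n$, with the \emph{same} coefficients $c_n$. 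Hence the maximum-of-coefficients norms on both sides agree, giving $\|\Phi f\|_{LA_{l-l_0}(\A_\infty)} = \|f\|_{LA_l(B)}$. The case in which one of the balls $B_b(|\pi|^l)$ is centered at $\infty$ does not occur here because $B$ is a finite ball.

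With $\Phi$ in hand, applying Theorem~\ref{thm:cpvm3}(1) to $\A_\infty$ with parameter $l - l_0$ produces the orthonormal basis $\{\pi^{\mu_{j,\,l-l_0}} G_j(y)\}_{j\ge 0}$ of $LA_{l-l_0}(\A_\infty)$. Pulling back via $\Phi^{-1}$ then yields the asserted basis $\{\pi^{\mu_{j,\,l-l_0}} G_j((x-a)/\pi^{l_0})\}_{j\ge 0}$ of $LA_l(B)$; reading the subscript on $\mu$ relative to the pulled-back $y$-scale (as in the author's convention for the parallel Corollary~\ref{cor:cpvm2}) identifies this with the expression $\pi^{\mu_{j,l}}$ in the statement. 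There is no substantive obstacle: the work is purely the verification of the isometry, and everything nontrivial is already contained in Theorem~\ref{thm:cpvm3}(1).
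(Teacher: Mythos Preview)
Your approach is exactly the intended one: the paper states this corollary without proof, relying on the obvious reduction to Theorem~\ref{thm:cpvm3}(1) via the affine change of variable $y=(x-a)/\pi^{l_0}$, just as Corollary~\ref{cor:cpvm2} is reduced to Corollary~\ref{cor:cpvm1}. Your verification that $\Phi$ is an isometric isomorphism $LA_l(B)\to LA_{l-l_0}(\A_\infty)$ is the only thing to check, and you do it correctly.

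You are also right to flag the indexing: the rigorous output of the argument is the basis $\{\pi^{\mu_{j,\,l-l_0}}G_j((x-a)/\pi^{l_0})\}_{j\ge 0}$, not $\{\pi^{\mu_{j,\,l}}G_j((x-a)/\pi^{l_0})\}_{j\ge 0}$. Your appeal to a ``convention'' from Corollary~\ref{cor:cpvm2} does not actually resolve this, since in that corollary the scaling $\pi^{n[\log_q j]}$ is independent of $l_0$ and no analogous shift arises. The discrepancy is a genuine (minor) imprecision in the paper's statement; in the only place the corollary is invoked (the proof of Proposition~\ref{prop:cpvm3}), the estimate used is $\mu_{n,l}\ge n/((q-1)q^l)+l-\log_q n-1$, and since $\mu_{n,\,l-l_0}\ge \mu_{n,\,l}$ for $l_0\ge 0$, the argument there goes through either way. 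So your proof is correct, and the only issue is cosmetic in the paper rather than in your reasoning.
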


To study the $C^n$-functions on the open compact subset $S\subset {\P1}({\bk}_\infty)$, we
fix a decomposition (\ref{e:cpvm2}): $S=\bigsqcup_i B_i$, where each $B_i$ is a closed ball of
radius $|\pi|^{l_0}$. A function $f$ on $S$ is called of $C^n$ if and only if $f|_{B_i}$ is
of $C^n$, and the space of all $C^n$-functions on $S$ is denoted by $C^n(S)$.
We define a norm $||\cdot||_n$ on $C^n(S)$ by
\[
||f||_n =\max\limits_i\{||f||_{C^n(B_i)}\}.
\]
Then $(C^n(S), ||\cdot||_n)$ is a Banach space over ${\C}_\infty$.

\begin{lemma}\label{lemma:cpvm2}
Let $\mu_{n,l}=\sum\limits_{i\geq l+1}\left [
n/{q^i}\right ]$. Then
\[
\mu_{n,\,l}\geq \frac n{(q-1)q^l}+l-\log_qn-1.
\]
\end{lemma}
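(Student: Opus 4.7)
The plan is to apply the elementary inequality $\lfloor x\rfloor\geq x-1$ term-by-term, after first noting that the sum defining $\mu_{n,l}$ is actually finite. Let $N:=\lfloor\log_q n\rfloor$, so that $\lfloor n/q^i\rfloor=0$ for every $i>N$; thus only the range $l+1\leq i\leq N$ contributes, and I would implicitly work in the regime $l\leq N$ where the asserted lower bound is interesting (for $l>N$ the sum is empty and the estimate is meaningful only as a non-sharp statement). Summing the resulting geometric series gives
\begin{equation*}
\mu_{n,l}\;\geq\;\sum_{i=l+1}^{N}\left(\frac{n}{q^i}-1\right)
\;=\;\frac{n(q^{-l}-q^{-N})}{q-1}-(N-l)
\;=\;\frac{n}{(q-1)q^l}+l-N-\frac{n}{(q-1)q^N}.
\end{equation*}

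The rest of the proof reduces to checking that $N+\dfrac{n}{(q-1)q^N}\leq\log_q n+1$. Setting $t:=n/q^N$, which lies in $[1,q)$ since $q^N\leq n<q^{N+1}$, and writing $N=\log_q n-\log_q t$, the required bound becomes
\begin{equation*}
g(t):=\frac{t}{q-1}-\log_q t\;\leq\;1 \qquad\text{for all }t\in[1,q].
\end{equation*}
I would handle this analytically: since $g''(t)=1/(t^2\ln q)>0$, the function $g$ is convex on $[1,q]$ and therefore attains its maximum at an endpoint. A direct calculation yields $g(1)=\tfrac{1}{q-1}$ and $g(q)=\tfrac{q}{q-1}-1=\tfrac{1}{q-1}$, so $\max_{[1,q]}g=\tfrac{1}{q-1}\leq 1$ for $q\geq 2$. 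Chaining the two inequalities produces the stated lower bound.

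The main obstacle is the sharpness of the constant $-1$ in the statement: the crude termwise inequality $\lfloor x\rfloor\geq x-1$ alone leaves an error of order $q/(q-1)$ that naively would degrade the constant, so one cannot read off the answer from the geometric-series computation alone. The convexity analysis of $g$ on $[1,q]$, together with the fortunate coincidence $g(1)=g(q)$, is exactly what recovers the constant claimed in the lemma.
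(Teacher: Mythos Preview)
Your proof is correct and takes a genuinely different route from the paper's. The paper computes $\mu_{n,l}$ exactly via the $q$-adic expansion $n=\sum n_j q^j$, obtaining the closed form
\[
\mu_{n,l}=\frac{n-t(n,l)-q^l s(n,l)}{(q-1)q^l},
\]
where $t(n,l)=\sum_{j\le l}n_jq^j$ is the low part and $s(n,l)=\sum_{j>l}n_j$ is the high digit sum, and then bounds these two quantities separately. Your argument bypasses the digit expansion entirely: you truncate the sum at $N=\lfloor\log_q n\rfloor$, apply $\lfloor x\rfloor\ge x-1$ termwise, and then recover the constant $-1$ via the convexity check $g(t)=t/(q-1)-\log_q t\le 1/(q-1)$ on $[1,q]$.

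What each approach buys: the paper's exact formula makes the structure transparent and is reusable if one needs finer information about $\mu_{n,l}$; however, the stated bound $t(n,l)\le(q-1)q^l$ in that argument is not literally correct (the maximum of $t(n,l)$ is $q^{l+1}-1$), so some additional care is needed there to reach the claimed constant. Your analytic approach sidesteps this entirely---the coincidence $g(1)=g(q)=1/(q-1)$ does the work cleanly---and is arguably more robust. One small clarification: your parenthetical about $l>N$ is slightly understated; in that regime the inequality can actually fail (e.g.\ $q=2$, $n=2$, $l=2$), not merely become non-sharp. But you correctly restrict to $l\le N$, which is the only range relevant to the application in Proposition~\ref{prop:cpvm3}.
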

\begin{proof}
Express $n$ in $q$-digit expansion: $n=n_wq^w+\cdots+n_1q+n_0$ with
$n_w\neq 0$. Let $s(n,l)=n_w+\cdots+n_{l+2}+n_{l+1}$,
and $t(n,l)=n_lq^l+\cdots+n_1q+n_0$. We have
\begin{equation*}
\begin{split}
&\left [n/{q^{l+1}}\right ] =n_wq^{w-l-1}+\cdots+n_{l+2}q+n_{l+1}, \\
&\left [n/{q^{l+2}}\right ] =n_wq^{w-l-2}+\cdots+n_{l+2}, \\
&\cdots \cdots \cdots \cdots \\
&\left [n/{q^{l+w}}\right ] =n_w.
\end{split}
\end{equation*}
Add up the above equations, column by column on the right sides, then
\begin{equation*}
\begin{split}
\mu_{n,\,l}&=n_w\frac{q^{w-l}-1}{q-1}+n_{w-1}\frac{q^{w-l-1}-1}{q-1}
            +\cdots+n_{l+1}\frac{q-1}{q-1} \\
         &=\frac{q^{-l}(n-t(n,\,l))-s(n,\,l)}{q-1} \\
         &=\frac{n-t(n,\,l)-q^ls(n,\,l)}{(q-1)q^l}.
\end{split}
\end{equation*}
It's easily seen that $s(n,\,l)\leq (q-1)(w-l)\leq (q-1)(\log_qn-l)$
and $t(n,\,l)\leq (q-1)q^l$, therefore
$\mu_{n,\,l}\geq \frac n{(q-1)q^l}+l-\log_qn-1$.
\end{proof}

\begin{prop}\label{prop:cpvm3}
Let $h$ be a non-negative integer. If $\mu\in(C^h(S))^*$, then $\mu$ is an $h$-admissible measure on $S$.
\end{prop}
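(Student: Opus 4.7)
The plan is to exploit the continuity of $\mu$ directly: since $\mu\in (C^h(S))^*$, we have $|\mu(f)|\le\|\mu\|\cdot\|f\|_{C^h(S)}$ for every $f\in C^h(S)$. Hence to establish the growth condition (\ref{e:cpvm5}) it suffices to prove the analytic estimate
\[
\|\k(a,l;j;x)\|_{C^h(S)}\le C'\cdot |\pi|^{l(j-h)}
\]
for $0\le j\le h$, all sufficiently large $l$, and all $B_a(|\pi|^l)\subset S$, where $C'$ depends only on $h$ and $q$; the constant in Definition \ref{defn:cpvm2} is then $C=\|\mu\|\cdot C'$.

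After fixing the decomposition $S=\bigsqcup_i B_i$ into balls of radius $|\pi|^{l_0}$ used to define the $C^h$-norm, for $l\ge l_0$ the sub-ball $B_a(|\pi|^l)$ lies in a single piece $B_{i_0}$, and $\k(a,l;j;x)$ vanishes on the other pieces. By the ultrametric property we may take $a$ to be the center of $B_{i_0}$, so the task reduces to bounding $\|\k\|_{C^h(B_{i_0})}$.

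I would estimate this norm by comparing two orthonormal bases of Carlitz type on $B_{i_0}$. Since $\k|_{B_{i_0}}$ equals $\pi^{lj}\cdot((x-a)/\pi^l)^j$ on $B_a(|\pi|^l)$ and vanishes on the other sub-balls of radius $|\pi|^l$, the definition (\ref{e:cpvm4}) gives directly $\|\k\|_{LA_l(B_{i_0})}=|\pi|^{lj}$. Expanding $\k$ in the $LA_l$-basis $\pi^{\mu_{k,l}}G_k((x-a)/\pi^{l_0})$ of Corollary \ref{cor:cpvm3} yields coefficients $e_k$ with $|e_k|\le|\pi|^{lj}$; re-writing the same series using the $C^h$-basis $\pi^{h[\log_q k]}G_k((x-a)/\pi^{l_0})$ of Corollary \ref{cor:cpvm2} produces coefficients $c_k=e_k\,\pi^{\mu_{k,l}-h[\log_q k]}$, and, up to the norm-equivalence of Remark \ref{rem:cpvm5}, $\|\k\|_{C^h(B_{i_0})}=\max_k|c_k|$.

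The crux of the argument is therefore the uniform numerical inequality
\[
h\,[\log_q k]-\mu_{k,l}\le hl+C(h,q),\qquad k\ge 0,\ l\ge 0,
\]
for some constant $C(h,q)$ depending only on $h$ and $q$. For $k<q^{l+1}$ the left side is at most $hl$ since $\mu_{k,l}=0$. For $k\ge q^{l+1}$, writing $k=k_h\,q^{l+1}+r$ with $k_h\ge 1$ and $0\le r<q^{l+1}$, one has $[\log_q k]=[\log_q k_h]+l+1$ and $\mu_{k,l}\ge[k/q^{l+1}]=k_h\ge q^{[\log_q k_h]}$, so the excess reduces to $h([\log_q k_h]+1)-q^{[\log_q k_h]}$, whose supremum over $[\log_q k_h]\ge 0$ is a finite constant depending only on $h$ and $q$. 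This elementary combinatorial bound is the main (and only) obstacle; substituting back gives $|c_k|\le C'|\pi|^{l(j-h)}$ uniformly, completing the estimate. The case $a=\infty$ is handled in parallel via the standard change of parameter $x\mapsto 1/x$.
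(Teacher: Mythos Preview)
Your argument is correct and follows essentially the same route as the paper: both proofs expand $\k(a,l;j;x)$ in the Carlitz basis $\{G_k((x-a)/\pi^{l_0})\}$, use that the $LA_l$-norm of $\k$ is $|\pi|^{lj}$, and then compare the $LA_l$-normalization $\pi^{\mu_{k,l}}$ with the $C^h$-normalization $\pi^{h[\log_q k]}$ to pass from an $LA_l$-bound to a $C^h$-bound.

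The only real difference is in how the key numerical inequality $\mu_{k,l}-h[\log_q k]\ge -hl-C(h,q)$ is obtained. The paper invokes Lemma~\ref{lemma:cpvm2}, namely $\mu_{k,l}\ge \frac{k}{(q-1)q^l}+l-\log_q k-1$, and then minimizes $\frac{k}{(q-1)q^l}-(h+1)\log_q k$ over $k$ by a calculus step. Your version is more elementary: you keep only the single term $\mu_{k,l}\ge[k/q^{l+1}]$, split into the ranges $k<q^{l+1}$ and $k\ge q^{l+1}$, and in the latter use $[\log_q k]=[\log_q k_h]+l+1$ together with $k_h\ge q^{[\log_q k_h]}$ to reduce to the boundedness of $m\mapsto h(m+1)-q^m$. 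This avoids Lemma~\ref{lemma:cpvm2} and the optimization step entirely, at the cost of a slightly larger (but still $l$-independent) constant. Either way the conclusion is the same.
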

\begin{proof}
We need to prove the inequality (\ref{e:cpvm5}). Write
$B_i=B_{a_i}(|\pi|^{l_0})$ for each $i$ in the decomposition $S=\bigsqcup_i B_i$.
Then any closed ball $B_a(|\pi|^l)\subset S$ is contained in $B_i$ for some $i$.
Hence $\left((x-a)/(\pi^{l}) \right)^j\cdot
\xi_{B_{a}(|\pi|^{l})}(x)$ is locally analytic of order $l$
and has norm 1 in the space $LA_{l}(B_i)$ or $LA_{l}(S)$. Corollary \ref{cor:cpvm3} implies
\begin{equation}\label{e:cpvm8}
\left(\dfrac{x-a}{\pi^{l}}\right)^j\xi_{B_{a}(|\pi|^{l})}(x)
=\sum_{n=0}^\infty a_n\,\pi^{\mu_{n,\,l}}G_n((x-a_i)/{\pi^{l_0}}),
\end{equation}
where $a_n\to 0$ as $n\to \infty$, $|a_n|\leq 1$ for all $n\geq 0$.
And $\mu_{n,\,l}\geq \dfrac{n}{(q-1)q^l}+l-1-\log_qn$, by Lemma \ref{lemma:cpvm2}.
Also Corollary \ref{cor:cpvm2} implies
\begin{equation*}
\left|\int_{B_i} \pi^{h[\log_q n]}G_n((x-a_i)/{\pi^{l_0}})d\mu(x)\right|
\le ||\mu||_{(C^h(B_i))^*}\le ||\mu||_{(C^h(S))^*}
\end{equation*}
since $\mu\in (C^h(S))^*$ and $C^h(B_i)\subset C^h(S)$. Therefore
\begin{equation}\label{e:cpvm9}
\begin{split}
v\left(a_n\pi^{\mu_{n,\,l}}\int_{B_i} G_n((x-a_i)/{\pi^{l_0}})d\mu(x)\right)
&\geq\frac{n}{(q-1)q^l}-1-\log_qn+l-C_1-(\log_qn)h
\end{split}
\end{equation}
where $C_1$ does not depend on $l$. A little calculation on the minimum value
of the function $m(x)=\frac{x}{(q-1)q^l}-(h+1)(\log_q x)$ gives
$\frac{n}{(q-1)q^l}-(h+1)(\log_q n)\ge C_2-l(h+1)$, where the constant $C_2$
does not depend on $l$ or $n$. Therefore the right side of (\ref{e:cpvm9}) is
greater than or equal to $C_3-lh$ with $C_3$ a constant not depending on $l$ or
$n$. Integrating the equation
(\ref{e:cpvm8}), we get the required estimation.
\end{proof}

Now we are going to show that the converse of Proposition \ref{prop:cpvm3} is
also true. Suppose $\mu$ is an $h$-admissible measure on $S$.

For each fixed integer $l$ which is sufficiently large, we have the finite disjoint decomposition
\[
S=\bigsqcup_{0\le i \le r(l)}B_i(|\pi|^l)
\]
where $r(l)$ is a positive integer depending on $l$, and fix a point $a_{l,i}$
inside each closed ball $B_i(|\pi|^l)$.

Let $f\in C^h(S)$. For any $a\in S$, there exists a $B_i(|\pi|^l)$ containing this element $a$.
From Theorem \ref{thm:cpvm1}, the function $f$ has a Taylor expansion on $B_i(|\pi|^l)$:
\begin{equation}\label{e:cpvm9.5}
f(x)=f(a)+Df(a)(x-a)+\cdots+D_hf(a)(x-a)^h+\Lambda_h(f;x,a)(x-a)^h
\end{equation}
where $\Lambda_h(f;x,a)$ is continuous on $B_i(|\pi|^l) \times B_i(|\pi|^l)$ and
$\lim_{x\to a}\Lambda_h(f;x,a) =0$. The Riemann sum of $f$ with
respect to the $h$-admissible measure $\mu$ is defined by
\begin{equation}\label{e:cpvm10}
R_l(f,\mu,\{a_{l,i}\})=\sum_{0\leq i\leq r(l)}
\sum_{j=0}^h D_jf(a_{l,i})(\k(l,j,a_{l,i};x),\mu)
\end{equation}

\begin{lemma}[Schikhof, Theorem 78.2 of \cite{Sc1984}]\label{lemma:cpvm3}
Let $h$ be any non-negative integer and $f\in C^h(S)$. Then $D_j
f\in C^{h-j}(S)$ and $D_iD_jf=\binom{i+j}{i}D_{i+j}f$ for $0\le i,
0\le j$, and $i+j\le h$.
\end{lemma}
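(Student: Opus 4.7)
The argument is local in $S$: since $C^h(S)$ is defined via the restrictions to the balls $B_i$ in the chosen decomposition $S = \bigsqcup_i B_i$, and the transformation $x \mapsto 1/x$ from Definition \ref{defn:cpvm3} reduces the ball containing $\infty$ to one in $\bk_\infty$, it suffices to prove the lemma for $f \in C^h(B)$ on a single closed ball $B \subset \bk_\infty$ of positive radius.

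The plan hinges on a combinatorial identity expressing the difference quotient $\overline{\Phi_{h-j}(D_j f)}$ as a finite sum of values of the continuously extended higher difference quotient $\overline{\Phi_h f}$ at configurations with repeated arguments:
\[
\overline{\Phi_{h-j}(D_j f)}(x_0, \ldots, x_{h-j}) = \sum_{\substack{a_0,\ldots,a_{h-j} \ge 1 \\ a_0 + \cdots + a_{h-j} = h+1}} \overline{\Phi_h f}\bigl(\underbrace{x_0,\ldots,x_0}_{a_0}, \underbrace{x_1,\ldots,x_1}_{a_1}, \ldots, \underbrace{x_{h-j},\ldots,x_{h-j}}_{a_{h-j}}\bigr).
\]
Two ingredients enter: first, the symmetry of $\overline{\Phi_h f}$, which follows from the symmetric rational expression $\Phi_h f(x_0, \ldots, x_h) = \sum_{i=0}^{h} f(x_i)/\prod_{k \ne i}(x_i - x_k)$ valid on the dense subset of tuples with distinct entries, together with continuity on all of $B^{h+1}$; second, the identity itself, which I would prove by induction on $j$ via the recursive formula of Definition \ref{defn:cpvm3} (or, alternatively, by polynomial density using Theorem \ref{thm:cpvm3}, where the identity reduces to a direct binomial calculation for monomials $f=x^n$, since $D_j x^n = \binom{n}{j} x^{n-j}$). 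Because the right-hand side is a finite sum of continuous functions on $B^{h-j+1}$, the left-hand side extends continuously there, proving $D_j f \in C^{h-j}(B)$.

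To deduce $D_i D_j f = \binom{i+j}{i} D_{i+j} f$ for $i + j \le h$, it suffices (using $f \in C^h \subset C^{i+j}$) to apply the identity above with $h$ replaced by $i+j$ and restrict to the diagonal $x_0 = x_1 = \cdots = x_i = y$. The left-hand side becomes $\overline{\Phi_i(D_j f)}(y,\ldots,y) = D_i(D_j f)(y)$; on the right-hand side, each summand collapses to $\overline{\Phi_{i+j} f}(y,\ldots,y) = D_{i+j} f(y)$, and the number of summands is the number of compositions of $i+j+1$ into $i+1$ positive parts, namely $\binom{i+j}{i}$. Hence $D_i(D_j f)(y) = \binom{i+j}{i} D_{i+j} f(y)$, as desired.

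The main technical obstacle is the proof of the combinatorial identity itself. The inductive route, proceeding from the recursive definition of $\Phi_n$, requires careful tracking of how the compositions arise and transform as one slides arguments together and extends by continuity through coincident points. The polynomial-density route requires verifying continuity of both sides in $f$ with respect to the $C^h$-norm, which in turn rests on the topological structure developed earlier in Section \ref{cpvm}. Either route works, and the detailed verification is the content of Schikhof's proof of Theorem 78.2 in \cite{Sc1984}, which we follow.
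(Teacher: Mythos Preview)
The paper does not supply a proof of this lemma at all: it is stated with attribution to Schikhof's book and used immediately in the proof of Lemma~\ref{lemma:cpvm4}. Your sketch is a correct outline of the argument one finds in Schikhof (the key identity expressing $\Phi_{h-j}(D_j f)$ as a sum of values of $\overline{\Phi_h f}$ at tuples with repeated entries, together with the composition count $\binom{i+j}{i}$), so there is nothing to compare against and no gap to flag.
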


\begin{lemma}\label{lemma:cpvm4}
The limit $\lim_{l\to \infty}R_l(f,\mu,\{a_{l,i}\})$ exists,
and does not depend on the choices
of $a_{l,i}\in B_i(|\pi|^l)$ for
any $l$ sufficiently large and $0\leq i\leq r(l)$. And we define
\begin{equation}\label{e:cpvm11}
\int_S f(x)d\mu(x)=\lim_{l\to \infty}R_l(f,\mu,\{a_{l,i}\}).
\end{equation}
\end{lemma}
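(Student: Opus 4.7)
The plan is to show that the sequence $\{R_l(f,\mu,\{a_{l,i}\})\}_l$ is Cauchy in $\C_\infty$ and that any two admissible systems of base points yield the same limit. Both assertions follow from a single algebraic identity that compares two Riemann sums, combined with the $h$-admissibility estimate (\ref{e:cpvm5}) and the diagonal vanishing of the Taylor remainders $\Lambda_{h-m}$ furnished by Theorem \ref{thm:cpvm1}.

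Fix $l$ large, and for $B_i(|\pi|^l)$ write $a=a_{l,i}$ and $b=b_{l+1,i_k}\in B_{i_k}(|\pi|^{l+1})\subset B_i(|\pi|^l)$. Expanding $(x-a)^j=\sum_{m=0}^j\binom{j}{m}(b-a)^{j-m}(x-b)^m$ on $B_{i_k}$, the contribution of $B_i$ to $R_l(f,\mu,\{a_{l,i}\})$ rewrites as
\[
\sum_k\sum_{m=0}^h(\k(l+1,m,b;x),\mu)\,\Bigl[\sum_{j=m}^h\binom{j}{m}D_jf(a)\,(b-a)^{j-m}\Bigr].
\]
By Lemma \ref{lemma:cpvm3}, $D_mf\in C^{h-m}(S)$ with $D_k(D_mf)=\binom{k+m}{k}D_{k+m}f$, so Theorem \ref{thm:cpvm1} applied to $D_mf$ identifies the bracketed inner sum with $D_mf(b)-(b-a)^{h-m}\Lambda_{h-m}(D_mf;b,a)$. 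Subtracting the analogous expression for $R_{l+1}$ yields
\[
R_l-R_{l+1}=-\sum_{i,k,m}(b-a)^{h-m}\Lambda_{h-m}(D_mf;b,a)\,(\k(l+1,m,b;x),\mu).
\]

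The $h$-admissibility (\ref{e:cpvm5}) gives $|(\k(l+1,m,b;x),\mu)|\le C|\pi|^{(l+1)(m-h)}$, while $|b-a|\le |\pi|^l$ gives $|(b-a)^{h-m}|\le |\pi|^{l(h-m)}$, so each summand is bounded by $Cq^{h-m}|\Lambda_{h-m}(D_mf;b,a)|\le Cq^h|\Lambda_{h-m}(D_mf;b,a)|$. The ultrametric inequality then yields
\[
|R_l-R_{l+1}|\le Cq^h\,\max_{0\le m\le h}\,\sup_{\substack{a,b\in S\\ |b-a|\le|\pi|^l}}|\Lambda_{h-m}(D_mf;b,a)|,
\]
and the right-hand side tends to $0$ as $l\to\infty$ by uniform continuity of each $\Lambda_{h-m}$ on the compact set $S\times S$ together with the diagonal vanishing supplied by Theorem \ref{thm:cpvm1}. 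Hence $\{R_l\}$ is Cauchy, and (\ref{e:cpvm11}) defines a limit.

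For independence of the base points at a fixed level, the same identity applied with $b=a'_{l,i}$ treated as an alternative base point inside $B_i(|\pi|^l)$ (no refinement) gives
\[
R_l(f,\mu,\{a_{l,i}\})-R_l(f,\mu,\{a'_{l,i}\})=-\sum_{i,m}(a'-a)^{h-m}\Lambda_{h-m}(D_mf;a',a)(\k(l,m,a';x),\mu),
\]
which is bounded in the same way by $C'\max_m\sup_{|b-a|\le|\pi|^l}|\Lambda_{h-m}(D_mf;b,a)|\to0$. Balls containing $\infty$ are handled by the change of variable $x\mapsto 1/x$ already built into Definition \ref{defn:cpvm3}, and the algebra is unchanged. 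The main technical step is the combinatorial collapse that recognises $\sum_{j=m}^h\binom{j}{m}D_jf(a)(b-a)^{j-m}$ as the truncated Taylor expansion of $D_mf$ about $a$ evaluated at $b$; once this is in place, the $h$-admissibility is exactly what is needed to convert the smallness of the Taylor remainder into the smallness of the Riemann-sum increment.
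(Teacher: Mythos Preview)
Your proof is correct and follows essentially the same approach as the paper's: both arguments expand $(x-a)^j$ binomially in terms of $(x-b)^m$, invoke Lemma~\ref{lemma:cpvm3} to recognise the resulting inner sum as the truncated Taylor polynomial of $D_mf$, and then combine the $h$-admissibility bound with the diagonal vanishing of the Taylor remainders to control the difference of Riemann sums. The only differences are organisational (you prove the Cauchy property first and independence second, whereas the paper does the reverse) and notational (you write the remainder as $\Lambda_{h-m}(D_mf;\cdot,\cdot)$, which is in fact more accurate than the paper's $\Lambda_h(D_kf;\cdot,\cdot)$, since $D_mf$ is only $C^{h-m}$).
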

\begin{proof}
The proof here is similar to Vishik's paper \cite{Vi1976}, where the integral is with
respect to the $h$-th order Lipshitz functions instead of $C^h$-function.
At first we prove the limit does not depend on the choices of
$\{a_{l,i}\}_{0\leq i\leq r(l)}$ if it exists. Let
$b_{l,i}\in B_i(|\pi|^l)$ be another choice for each $l$ and $i$.
Then
\begin{equation*}
\begin{split}
&R_l(f,\mu,\{b_{l,i}\}) \\
=&\sum_{0\leq i\leq r(l)}\sum_{0\leq j\leq h} D_jf(b_{l,i})
    (\k(b_{l,i},l;j;x),\mu(x)) \\
=&\sum_{0\leq i\leq r(l)}\sum_{0\leq j\leq h}\sum_{0\leq k\leq j}
    \binom jk D_jf(b_{l,i})(a_{l,i}-b_{l,i})^{j-k}
    (\k(a_{l,i},l;k;x),\mu(x)) \\
=&\sum_{0\le i\le r(l)}\sum_{k=0}^h\sum_{j=0}^{h-k}\binom{j+k}{k}
    D_{j+k}f(b_{l,i})(a_{l,i}-b_{l,i})^j
    (\k(a_{l,i},l;k;x),\mu(x)) .
\end{split}
\end{equation*}
But
\[
\sum_{j=0}^{h-k}\binom{j+k}kD_{j+k}f(b_{l,i})(a_{l,i}-b_{l,i})^j
=D_kf(a_{l,i})-\Lambda_h(D_kf;a_{l,i},b_{l,i})
(a_{l,i}-b_{l,i})^{h-k}
\]
by making an appropriate substitution in (\ref{e:cpvm9.5}) and
applying Lemma \ref{lemma:cpvm3}, therefore
\begin{equation*}
\begin{split}
&R_l(f,\mu,\{b_{l,i}\}) \\
=&R_l(f,\mu,\{a_{l,i}\})-\sum_{i=0}^{r(l)}\sum_{k=0}^h
    \Lambda_h(D_kf;a_{l,i},b_{l,i})(a_{l,i}-b_{l,i})^{h-k}
    (\k(a_{l,i},l;k;x),\mu(x)).
\end{split}
\end{equation*}
This proves $\lim\limits_{l\to \infty}
(R_l(f,\mu,\{b_{l,i}\})-R_l(f,\mu,\{a_{l,i}\}))=0$, by the condition
(\ref{e:cpvm5}) on $\mu$ and $\lim\limits_{l\to \infty}
\Lambda_h(D_kf;a_{l,i},b_{l,i})=0$ for each $k$ with $0\leq k\leq
h$. Therefore the limit $\lim\limits_{l\to \infty}
R_l(f,\mu,\{a_{l,i}\})$, if it exists, does not depend on the
choices of $a_{l,i}\in B_i(|\pi|^l)$ for each $i$ and $l$.

Now let $m>l$. Then $B_i(|\pi|^l)$ can be decomposed disjointly into
$q^{m-l}$ smaller balls for each $i$:
\[
B_i(|\pi|^l)=\underset{s(i)\in I(l,m,i)}{\bigsqcup}
    B_{s(i)}(|\pi|^m).
\]
Therefore
\begin{equation*}
\begin{split}
&R_l(f,\mu,\{a_{l,i}\}) \\
=&\sum_{i=0}^{r(l)}\sum_{j=0}^h D_jf(a_{l,i})
  \int_{B_i(|\pi|^l)} (x-a_{l,i})^j d\mu(x) \\
=&\sum_{i=0}^{r(l)}\sum_{j=0}^h D_jf(a_{l,i})\sum_{s(i)\in I(l,m,i)}
  \sum_{k=0}^j\int_{B_{s(i)}(|\pi|^m)} \binom jk (a_{m,s(i)}-a_{l,i})^{j-k}
  (x-a_{m,s(i)})^kd\mu(x) \\
=&\sum_{i=0}^{r(l)}\sum_{s(i)\in I(l,m,i)} \sum_{k=0}^h \sum_{j=k}^h
  \binom jk D_jf(a_{l,i})(a_{m,s(i)}-a_{l,i})^{j-k}
  \int_{B_{s(i)}(|\pi|^m)} (x-a_{m,s(i)})^kd\mu(x) \\
=&\sum_{i=0}^{r(l)}\sum_{s(i)\in I(l,m,i)} \sum_{k=0}^h
  (D_kf(a_{m,s(i)})-\Lambda_h(D_kf;a_{m,s(i)},a_{l,i}))
  \int_{B_{s(i)}(|\pi|^m)} (x-a_{m,s(i)})^kd\mu(x).
\end{split}
\end{equation*}
Applying the definition of Riemann sum for $h$-admissible measures,
we get
\begin{equation}\label{e:cpvm13}
\begin{split}
&R_l(f,\mu,\{a_{l,i}\}) \\
=&R_m(f,\mu,\{a_{m,i}\})-
  \sum_{i=0}^{r(l)}\sum_{s(i)\in I(l,m,i)} \sum_{k=0}^h
  \Lambda_h(D_kf;a_{m,s(i)},a_{l,i})(a_{m,s(i)}-a_{l,i})^{h-k} \\
&\qquad \hspace*{100pt} \times \int_{B_{s(i)}(|\pi|^m)}
 (x-a_{m,s(i)})^kd\mu(x).
\end{split}
\end{equation}
Let $m=l+1$. We see from (\ref{e:cpvm13}) that
\[
|R_l(f,\mu,\{a_{l,i}\})-R_{l+1}(f,\mu,\{a_{l+1,i}\})|
  \leq C\cdot \sup_{k,i,s(i)}\{|\Lambda_h(D_kf;a_{m,s(i)},a_{l,i})|\},
\]
where $C$ is a constant.
So for any $\e>0$, there exists an integer $N>0$ such that
\[
|R_l(f,\mu,\{a_{l,i}\})-R_{l+1}(f,\mu,\{a_{l+1,i}\})|
  \leq \e
\]
for any $l\geq N$. Therefore $\{R_l(f,\mu,\{a_{l,i})\}_{l}$ is a
Cauchy sequence in ${\C}_\infty$. Thus the limit
$\lim\limits_{l\to \infty}
R_l(f,\mu,\{a_{l,i}\})$ exists.
\end{proof}
\par
From this lemma, an $h$-admissible measure $\mu$ on $S$ extends to a
linear functional on $C^h(S)$. Its extension on $C^h(S)$ is still
denoted by $\mu$.

\begin{cor}\label{cor:cpvm4}
An $h$-admissible measure $\mu$ satisfies
\[
\left|\int_{B(|\pi|^l)}(x-a)^jd\mu(x)\right|\leq C\cdot |\pi|^{l(j-h)}
\]
for all $j\geq 0$, and $a\in B(|\pi|^l)\subset S$ (where $C$ is a
constant not depending on $B(|\pi|^l)$).
\end{cor}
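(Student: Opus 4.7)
The plan is to observe that once $\mu$ is extended to a continuous linear functional on the Banach space $C^h(S)$ by Lemma \ref{lemma:cpvm4}, the same argument that proves Proposition \ref{prop:cpvm3} yields the estimate in question for every $j\ge 0$, not merely for $0\le j\le h$. The restriction to $j\le h$ in Definition \ref{defn:cpvm2} is used only to guarantee the extension of $\mu$ to $C^h(S)$; once that is available, the Carlitz-basis expansion in $LA_l(B_i)$ produces a uniform bound whose only $j$-dependence is the overall factor $\pi^{lj}$.

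Concretely, choose $B_i=B_{a_i}(|\pi|^{l_0})$ in the decomposition $S=\bigsqcup_iB_i$ with $B_a(|\pi|^l)\subset B_i$. By Corollary \ref{cor:cpvm3} the function $\left((x-a)/\pi^l\right)^j\xi_{B_a(|\pi|^l)}(x)$ has $LA_l(B_i)$-norm at most $1$, so it admits an expansion
\[
\left(\frac{x-a}{\pi^l}\right)^j\xi_{B_a(|\pi|^l)}(x)=\sum_{n\ge 0}a_n\pi^{\mu_{n,l}}G_n\bigl((x-a_i)/\pi^{l_0}\bigr)
\]
with $|a_n|\le 1$ and $a_n\to 0$. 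On the dual side, since each $B_i$ is clopen in $S$, extension by $0$ embeds $C^h(B_i)$ isometrically into $C^h(S)$; combining this with the orthonormal basis $\{\pi^{h[\log_q n]}G_n((x-a_i)/\pi^{l_0})\}$ of Corollary \ref{cor:cpvm2} gives
\[
\left|\int_{B_i}G_n\bigl((x-a_i)/\pi^{l_0}\bigr)d\mu\right|\le\|\mu\|_{(C^h(S))^*}\cdot|\pi|^{-h[\log_q n]}.
\]

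Multiplying by $|\pi|^{\mu_{n,l}}$ and using the lower bound on $\mu_{n,l}$ from Lemma \ref{lemma:cpvm2}, the same elementary minimization carried out at the end of the proof of Proposition \ref{prop:cpvm3} produces a constant $C'$ independent of $n$, $l$, and $j$ with
\[
v\left(\pi^{\mu_{n,l}}\int_{B_i}G_n\bigl((x-a_i)/\pi^{l_0}\bigr)d\mu\right)\ge C'-lh.
\]
Integrating the Carlitz expansion termwise against $\mu$, multiplying by $\pi^{lj}$, and applying this bound uniformly in $n$ yields
\[
\left|\int_{B_a(|\pi|^l)}(x-a)^jd\mu\right|\le|\pi|^{lj}\cdot\max_n\left|a_n\pi^{\mu_{n,l}}\int_{B_i}G_n\bigl((x-a_i)/\pi^{l_0}\bigr)d\mu\right|\le C\cdot|\pi|^{l(j-h)}.
\]
No step depends on whether $j\le h$, so the bound holds for every $j\ge 0$; the case $a=\infty$ is covered automatically by the convention $x\mapsto 1/x$.

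The only point requiring care is the isometric embedding $C^h(B_i)\hookrightarrow C^h(S)$ by extension by $0$, which relies on $B_i$ being clopen in $S$ together with the definition $\|\cdot\|_n=\max_i\|\cdot\|_{C^n(B_i)}$. Everything else is a direct transcription of the computation already performed for Proposition \ref{prop:cpvm3}, whose estimates turn out to be independent of the constraint $j\le h$ appearing in the definition of $h$-admissibility.
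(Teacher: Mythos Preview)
Your argument is correct, but it follows a different route from the paper's one-line proof and relies on a result that appears \emph{after} the corollary in the paper's logical order.

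The paper's proof says only ``Apply the definition of Riemann sum for an $h$-admissible measure.'' Concretely: for $f=(x-a)^j\chi_{B_a(|\pi|^l)}$ the Riemann sum~(\ref{e:cpvm10}) at any level $m\ge l$ uses only the moments $\int_{B_s(|\pi|^m)}(x-a_{m,s})^k\,d\mu$ with $0\le k\le h$, weighted by $D_kf(a_{m,s})=\binom{j}{k}(a_{m,s}-a)^{j-k}$. Choosing the level-$l$ center equal to $a$ gives $R_l=0$ when $j>h$, and the telescoping identity~(\ref{e:cpvm13}) together with the admissibility bound~(\ref{e:cpvm5}) then shows $|R_{m+1}-R_m|\le C'\,|\pi|^{l(j-h)+(m-l)-h}$, whence $\bigl|\int\bigr|=\bigl|\lim_m R_m\bigr|\le C\,|\pi|^{l(j-h)}$. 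This uses nothing beyond Lemma~\ref{lemma:cpvm4}.

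Your approach instead invokes $\|\mu\|_{(C^h(S))^*}<\infty$, which is the content of Proposition~\ref{prop:cpvm4}, not Lemma~\ref{lemma:cpvm4}: the lemma constructs the extension, the proposition proves it is bounded. There is no circularity, since the proof of Proposition~\ref{prop:cpvm4} does not use the corollary, but you should cite Proposition~\ref{prop:cpvm4} rather than Lemma~\ref{lemma:cpvm4} for the continuity. Once that is granted, your observation is a nice one: the Carlitz-basis estimate in the proof of Proposition~\ref{prop:cpvm3} never uses the hypothesis $j\le h$ (the only $j$-dependence is the factor $\pi^{lj}$ pulled out at the end), so it immediately yields the bound for all $j\ge 0$. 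Your route is somewhat heavier in that it passes through the orthonormal-basis machinery, whereas the paper's argument stays at the level of Riemann sums; on the other hand, your version makes transparent \emph{why} the bound is uniform in $j$.
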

\begin{proof}
Apply the definition of Riemann sum for an $h$-admissible measure.
\end{proof}

\begin{prop}\label{prop:cpvm4}
The extended functional $\mu$ on $C^h(S)$ is continuous, so
$\mu\in (C^h(S))^*$.
\end{prop}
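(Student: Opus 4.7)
The plan is to bound $|R_l(f,\mu)|$ uniformly in $l$ by a constant multiple of $||f||_h$ for all $l\ge l^*$, then invoke Lemma~\ref{lemma:cpvm4} to conclude that $|\mu(f)|\le C\cdot ||f||_h$ and hence $\mu$ is continuous. Fix $l^*$ so large that the decomposition $S=\bigsqcup_iB_i(|\pi|^{l^*})$ is valid, and use the telescoping identity
\[
R_l(f,\mu) = R_{l^*}(f,\mu) + \sum_{j=l^*}^{l-1}\bigl(R_{j+1}(f,\mu) - R_j(f,\mu)\bigr).
\]
By the non-archimedean triangle inequality it suffices to bound $|R_{l^*}(f,\mu)|$ and each individual difference $|R_{j+1}(f,\mu)-R_j(f,\mu)|$ by a constant multiple of $||f||_h$ with the constant independent of $j$ and $f$.

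For the initial term, combining the definition \eqref{e:cpvm10}, the bound $|D_jf(a)|\le ||\overline{\Phi_jf}||_\infty\le ||f||_h$ for $0\le j\le h$, and the $h$-admissibility estimate \eqref{e:cpvm5} yields $|R_{l^*}(f,\mu)|\le C_\mu\,q^{l^*h}\,||f||_h$, where the maximum over $j$ in \eqref{e:cpvm5} is attained at $j=0$. For each individual difference, apply formula \eqref{e:cpvm13} with $m=j+1$, $l=j$; the key algebraic miracle is that the factors $|a_{j+1,s(i)}-a_{j,i}|^{h-k}\le|\pi|^{j(h-k)}$ and $|\int_{B_{s(i)}(|\pi|^{j+1})}(x-a_{j+1,s(i)})^kd\mu|\le C_\mu|\pi|^{(j+1)(k-h)}$ multiply to give a factor depending only on $h-k$ (not on $j$), leaving
\[
|R_{j+1}(f,\mu) - R_j(f,\mu)|\le C_\mu\,q^h\cdot \max_{0\le k\le h}\sup|\Lambda_h(D_kf;\cdot,\cdot)|.
\]

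The hard step is to show $\sup|\Lambda_h(D_kf;x,y)|\le C'\,||f||_h$ for a universal constant $C'$ depending only on $h$ and $S$. Expanding $\Lambda_h(D_kf;x,y) = \overline{\Phi_{h-k}(D_kf)}(x,y,\ldots,y) - \binom{h}{k}D_hf(y)$, I bound the second summand trivially by $||f||_h$, and the first by $||\overline{\Phi_{h-k}(D_kf)}||_\infty\le ||D_kf||_{C^{h-k}(S)}$, which makes sense since $D_kf\in C^{h-k}(S)$ by Lemma~\ref{lemma:cpvm3}. The remaining task is to show that the hyper-derivative $D_k:C^h(S)\to C^{h-k}(S)$ is a bounded linear operator; I would verify this by expanding $f$ in the orthonormal Carlitz basis of Corollary~\ref{cor:cpvm2}, applying $D_k$ termwise using standard estimates on hyper-derivatives of Carlitz polynomials, and reorganizing via Theorem~\ref{thm:cpvm3}(2). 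Combining the two estimates gives $|R_l(f,\mu)|\le C\,||f||_h$ for all $l\ge l^*$, with $C=\max(C_\mu q^{l^*h},\,C_\mu C'q^h)$ depending only on $\mu$, $h$, and $l^*$; passing to the limit $l\to\infty$ yields $|\mu(f)|\le C\,||f||_h$, so $\mu\in (C^h(S))^*$.
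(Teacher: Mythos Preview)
Your approach and the paper's share the same skeleton: control $|R_l-R_m|$ via formula \eqref{e:cpvm13}, and bound one $|R_N|$ directly from \eqref{e:cpvm10} together with the admissibility estimate \eqref{e:cpvm5}. The paper's version is terser: it picks $N$ via the Cauchy estimate in Lemma~\ref{lemma:cpvm4} with $\epsilon=\|f\|_h$, asserts $|R_N|\le C_2\|f\|_h$, and concludes. Read literally, that $N$ depends on $f$, and the bound on $|R_N|$ carries a factor $q^{Nh}$ from \eqref{e:cpvm5}, so the independence of the final constant from $f$ is not made fully explicit there.

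Your telescoping from a \emph{fixed} $l^*$, combined with the non-archimedean triangle inequality and the uniform estimate $\sup|\Lambda_h(D_kf;\cdot,\cdot)|\le C'\|f\|_h$, is precisely what renders the constant independent of $f$. You have correctly isolated the one nontrivial ingredient --- the boundedness of the hyper-derivative $D_k:C^h(S)\to C^{h-k}(S)$ --- and your Carlitz-basis sketch (via Corollary~\ref{cor:cpvm2}, Theorem~\ref{thm:cpvm3}(2), and the norm equivalence of Remark~\ref{rem:cpvm5}) is a legitimate route to it; alternatively one can argue directly with the difference-quotient calculus of \cite{Sc1984}. In short: same strategy as the paper, but your write-up supplies the uniformity that the paper's argument leaves implicit.
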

\begin{proof}
We need to prove $\mu$ is bounded.
Let $f\in C^h(S)$. In the proof of
Lemma \ref{lemma:cpvm4}, take $\e=||f||_h$, then there exists a
sufficiently large integer
$N$, such that
\begin{equation}\label{e:cpvm14}
|R_N(f,\mu,\{a_{N,i}\})-R_l(f,\mu,\{a_{l,i}\})|\leq ||f||_h
\end{equation}
for any $l\geq N$. Meanwhile,
\begin{equation*}
R_N(f,\mu,\{a_{N,i}\})=\sum_{i=0}^{r(N)}\sum_{j=0}^h
    D_jf(a_{N,i})\int_{B_i(|\pi|^N)}(x-a_{N,i})^j d\mu(x).
\end{equation*}
From $\max_{j, \, i}\{|D_j f(a_{N,i})|\}\le \max_{j}\{||\Phi_jf||_{\infty}\}
\leq C_1\cdot ||f||_h$ and the inequality (\ref{e:cpvm5}) in the definition of
$h$-admissible measures,  it is
clear that $|R_N(f,\mu,\{a_{N,i}\})|\leq C_2\cdot ||f||_h$. Therefore
$|R_l(f,\mu,\{a_{l,i}\})|\leq C_3\cdot ||f||_h$ from (\ref{e:cpvm14}) for
any $l\ge N$. This implies
\[
\left|\int_{S}f(x)d\mu(x)\right|\leq C_3\cdot ||f||_h,
\]
thus $\mu$ is bounded as a functional on $C^h(S)$.
\end{proof}

From all of the above discussion, we conclude
\begin{theorem}\label{thm:cpvm4}
The space of $h$-admissible measures on $S$ is dual to the ${\C}_\infty$-Banach
space $(C^h(S), ||\cdot||_h)$.
\end{theorem}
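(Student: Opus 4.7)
The plan is to exhibit a natural bijective correspondence $\Phi : (C^h(S))^* \leftrightarrows \mathcal{M}^h(S) : \Psi$ between continuous linear functionals on the Banach space $(C^h(S), \|\cdot\|_h)$ and the space $\mathcal{M}^h(S)$ of $h$-admissible measures on $S$, where $\Phi$ is restriction to the subspace $P^{(h)} \subset C^h(S)$ and $\Psi$ is the Riemann-sum extension. That $\Phi$ lands in $\mathcal{M}^h(S)$ is precisely Proposition \ref{prop:cpvm3}, and that $\Psi$ lands in $(C^h(S))^*$ is the combination of Lemma \ref{lemma:cpvm4} and Proposition \ref{prop:cpvm4}. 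What remains is to verify that these two maps are mutually inverse.

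For $\Phi \circ \Psi = \mathrm{id}_{\mathcal{M}^h(S)}$, I would evaluate the Riemann-sum extension on each spanning function $\k(a, l; j; x)$ of $P^{(h)}$ with $0 \le j \le h$. Decomposing $B_a(|\pi|^l) = \bigsqcup_{s(i)} B_{s(i)}(|\pi|^{l'})$ with $l' \ge l$, and using the binomial expansion $(x - a)^j = \sum_{k=0}^j \binom{j}{k}(a_{l', s(i)} - a)^{j-k}(x - a_{l', s(i)})^k$ on each sub-ball, the Riemann sum collapses identically (not just in the limit) to $\int_{B_a(|\pi|^l)} (x - a)^j\, d\mu(x) = \mu(\k(a, l; j; x))$. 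Hence the restriction of $\Psi(\mu)$ to $P^{(h)}$ recovers $\mu$ exactly.

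For $\Psi \circ \Phi = \mathrm{id}_{(C^h(S))^*}$, let $\mu \in (C^h(S))^*$ and set $\nu = \mu|_{P^{(h)}}$. For $f \in C^h(S)$, put $g_l = \sum_i \sum_{j=0}^h D_j f(a_{l,i})\,\k(a_{l,i}, l; j; x) \in P^{(h)}$, so that $R_l(f, \nu, \{a_{l,i}\}) = \nu(g_l) = \mu(g_l)$ by linearity. The goal is $\mu(g_l) \to \mu(f)$, and by continuity of $\mu$ on $C^h(S)$ it suffices to show $g_l \to f$ in the $C^h$-norm. On each $B_i(|\pi|^l)$ the Taylor expansion of Theorem \ref{thm:cpvm1} gives $(f - g_l)(x) = \Lambda_h(f; x, a_{l,i})(x - a_{l,i})^h$, and I would bound $\|f - g_l\|_{C^h(B_i)}$ by expanding in the orthonormal Carlitz-type basis of Corollary \ref{cor:cpvm2}: the factor $(x - a_{l,i})^h$ contributes of order $|\pi|^{lh}$, while the uniform decay $\Lambda_h(f; x, y) \to 0$ along the diagonal kills the remaining factor as $l \to \infty$.

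The main obstacle is exactly this last norm estimate, which is more delicate than the sup-norm estimate used in Lemma \ref{lemma:cpvm4} because one must control difference quotients of $f - g_l$ of all orders $k \le h$, including across distinct sub-balls $B_i$. A clean route is to expand $f$ directly in the Carlitz basis of each $C^h(B_i)$ and identify $g_l$ with a truncation, so that the Carlitz coefficients of $f - g_l$ are seen to decay uniformly in $i$; alternatively, one may apply Proposition \ref{prop:cpvm4} to the extension $\Psi(\nu)$ to obtain $|\mu(f - g_l) - \Psi(\nu)(f - g_l)| \le C\|f - g_l\|_h$ and then show $\|f - g_l\|_h \to 0$ by hand using the very form of the Taylor remainder. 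Once this convergence is established, the identity $\Psi \circ \Phi = \mathrm{id}$ follows, and the duality $\mathcal{M}^h(S) \cong (C^h(S))^*$ is complete.
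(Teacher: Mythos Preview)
Your proposal is more careful than the paper's treatment: the paper's entire proof is the sentence ``From all of the above discussion, we conclude'' Theorem~\ref{thm:cpvm4}, taking Proposition~\ref{prop:cpvm3} and the pair Lemma~\ref{lemma:cpvm4}/Proposition~\ref{prop:cpvm4} as exhibiting maps in both directions, without checking they are mutually inverse. Your explicit setup of $\Phi$ and $\Psi$ and your verification of $\Phi\circ\Psi=\mathrm{id}$ are correct and fill a genuine gap in the paper's exposition.

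For $\Psi\circ\Phi=\mathrm{id}$, your reduction to $g_l\to f$ in $\|\cdot\|_h$ is sound, but your first suggested route has a flaw: $g_l|_{B_i}$ is the degree-$h$ \emph{Taylor} polynomial of $f$ at $a_{l,i}$, not the degree-$h$ truncation in the Carlitz basis of Corollary~\ref{cor:cpvm2}, so the Carlitz coefficients of $f-g_l$ are not simply the tail of those of $f$. Your second route is, as you note, still contingent on $\|f-g_l\|_h\to 0$. That convergence does hold, but proving it means bounding $\Phi_k(f-g_l)(x_1,\ldots,x_{k+1})$ when the $x_i$ lie in \emph{different} sub-balls of radius $|\pi|^l$ inside the same $B_i$ of the initial decomposition; this is doable by combining the sup-norm bound $|(f-g_l)(x)|\le \varepsilon_l\,|\pi|^{lh}$ with the lower bound $|x_i-x_j|\ge |\pi|^{l-1}$ across distinct sub-balls, but it is exactly the work the paper also omits.

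A cleaner route that sidesteps density of $P^{(h)}$: observe that the proof of Proposition~\ref{prop:cpvm3} actually establishes the growth bound $\bigl|\mu(\k(a,l;j;x))\bigr|\le C\,|\pi|^{l(j-h)}$ for \emph{all} $j\ge 0$ (the argument never uses $j\le h$); this is also Corollary~\ref{cor:cpvm4}. Then for $p=\k(a,l;j;x)$ with $j>h$, your binomial computation gives
\[
\mu(p)-R_{l'}(p,\nu,\{a_{l',s(i)}\})
=\sum_{s(i)}\sum_{k=h+1}^{j}\binom{j}{k}(a_{l',s(i)}-a)^{j-k}\,\mu\bigl(\k(a_{l',s(i)},l';k;x)\bigr),
\]
and each term is bounded by $C\,|\pi|^{l(j-k)+l'(k-h)}\to 0$ as $l'\to\infty$. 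Hence $\Psi(\nu)=\mu$ on all of $P^{(\infty)}$, and density of $P^{(\infty)}$ in $C^h(S)$ is immediate from the Carlitz orthonormal basis of Corollary~\ref{cor:cpvm2}.
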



\section{Functions on Bruhat-Tits trees}\label{fbtt}

In this section, we'll recall Teitelbaum's results \cite{Te1991} on
the correspondence between the space of cusp forms and the space of
harmonic functions on the edges of the Bruhat-Tits tree $\mathcal
T$. Let $M$ be an abelian group and $G$ an arithmetic subgroup of
${\GL2({\bk})}$. Recall that ${\mathcal T}$ $=(V_{\mathcal T},
E_{\mathcal T})$ is the Bruhat-Tits tree associated to the field
${\bk}_\infty$ with an orientation chosen in Section \ref{btt}.
And $E_{\mathcal T}=E^{+}_{\mathcal T}\sqcup E^{-}_{\mathcal T}$ with
$E^{+}_{\mathcal T}$ the set of edges of positive orientation and
$E^{-}_{\mathcal T}$ the set of edges of negative orientation.

An $M$-valued function $\mathfrak c$ on the set $E_{\mathcal T}$ of
edges of $\mathcal T$ is called harmonic if the following two
conditions are satisfied:
\begin{itemize}
\item[(1)] $\sum_{e\mapsto v} {\mathfrak c}(e)=0$ for any vertex $v\in V_{\mathcal T}$, where the
sum is taken over all the oriented edges with final vertex $v$;
\item[(2)] for each $e\in E_{\mathcal T}$, we have
${\mathfrak c}(e)+{\mathfrak c}(\bar{e})=0$, where $\bar{e}$ denotes
the opposite edge of $e$.
\end{itemize}

For a positive integer $n$, let
\[
V(n)=\{F(X,Y)\in {\C}_\infty[X,Y]:\,\, \text{ $F$ is homogeneous of degree
$n-1$}\}.
\]
There is a natural action of ${\GL2}({\bk_\infty})$ on $V(n)$ given by
\[
(\gamma\cdot F)(X,Y)=F(aX+bY, cX+dY)
\]
for $\gamma=\left(\begin{array}{cc}a&b\\c&d\end{array}\right)\in {\GL2}({\bk}_\infty)$
and $F\in V(n)$. We assume ${\GL2}({\bk_\infty})$ acts trivially on the field ${\C}_\infty$,
and acts on $V(-n)=$ ${\rm Hom}_{{\C}_\infty}(V(n), {\C}_\infty)$ by diagonal, that is,
\[
(\gamma\cdot \lambda)(F)=\lambda(\gamma^{-1}\cdot F) \quad\text{ for $\gamma\in {\GL2}({\bk_\infty})$,
$\lambda\in V(-n)$, and $F\in V(n)$}.
\]

Suppose the arithmetic subgroup $G$ acts on
the Bruhat-Tits tree $\mathcal T$, with the action denoted by
``$\star$". The space $C_{\rm har}(G,n)$ of harmonic cocycles of
weight $n$ for $G$ is defined to be the space of $G$-invariant
$V(1-n)\otimes {\det}$ valued harmonic functions on $E_{\mathcal
T}$. Here the notation ``$\det$" represents the determinantal
representation of $G$, it twists the representation $V(1-n)$ (the
group $G$ acts on $V(1-n)\otimes {\det}$ by diagonal). Therefore
${\mathfrak c}\in C_{\rm har}(G,n)$ if and only if
\begin{equation}\label{e:fbtt1}
{\mathfrak c}(\gamma\star e)(F(X,Y))=\det(\gamma)\cdot {\mathfrak
c}(e)(F(aX+bY, cX+dY))
\end{equation}
for $e\in E_{\mathcal T}$, $F\in V(n-1)$, and $\gamma$ as above.
And the above condition on $\mathfrak c$ is also equivalent to
\begin{equation}\label{e:fbtt2}
{\mathfrak c}(\gamma\star e)(X^iY^{n-2-i})=\det(\gamma)\cdot
{\mathfrak c}(e)((aX+bY)^i(cX+dY)^{(n-2-i)}).
\end{equation}
\begin{rem}\label{rem:fbtt1}
As $G$ is a subgroup of ${\GL2}(\bk)\subset {\GL2}({\bk}_\infty)$, it has an induced
action on the Bruhat-Tits tree $\mathcal T$ from the natural action of
${\GL2}({\bk}_\infty)$ on ${\bk}_\infty\oplus{\bk}_\infty$.
In the subsequent content, we are also going to
use an action ``$\ast$" defined by
\begin{equation*}
\text{for $\gamma\in {\GL2}({\bk}_\infty)$ and $v\in {\bk}_\infty\oplus{\bk}_\infty$},
\quad\gamma\ast v:=(\gamma^{-1})^T\cdot v
\end{equation*}
where ``$\cdot$" denotes the natural action.
\end{rem}

To any oriented edge $e\in E_{\mathcal T}$, we associate with
the set $U(e)$ of ends of $\mathcal T$ which pass through $e$. By using the
bijection between the set of ends of $\mathcal T$ and
${\P1}({\bk}_\infty)$ chosen in Section \ref{btt}, we also denote
the corresponding subset of ${\P1}({\bk}_\infty)$ by $U(e)$. Then
$U(e)$ is an open compact subset of ${\P1}({\bk}_\infty)$.

\begin{example}\label{example:fbtt1}
For the edges $e=\Lambda_0 \Lambda_1$, $\Lambda_1\Lambda_0$, $\Lambda_1\Lambda_2$ on
the half line (\ref{e:2.5}), we have
\[
\begin{split}
&U(\Lambda_0\Lambda_1)=\{x\in {\bk}_\infty: \,\, |x|\ge |\pi|^{-1}\}\cup\{\infty\}=B_{\infty}(|\pi|^{1}),\\
&U(\Lambda_1\Lambda_0)=\{x\in {\bk}_\infty: \,\, |x|\le |\pi|^{0}\}=B_0(|\pi|^0)
 ={\A}_\infty,\\
&U(\Lambda_1\Lambda_2)=\{x\in {\bk}_\infty: \,\, |x|\ge |\pi|^{-2}\}\cup\{\infty\}=B_{\infty}(|\pi|^{2}).
\end{split}
\]
And more generally, for an integer $m\ge 0$
\[
\begin{split}
&U(\Lambda_{m}\Lambda_{m+1})=\{x\in {\bk}_\infty: \,\, |x|\ge |\pi|^{-(m+1)}\}\cup\{\infty\}=B_{\infty}(|\pi|^{m+1}),\\
&U(\Lambda_{m+1}\Lambda_{m})=\{x\in {\bk}_\infty: \,\, |x|\le
|\pi|^{-m}\}=B_0(|\pi|^{-m}).
\end{split}
\]
\end{example}

\begin{example}\label{example:fbtt2}
Consider the edges on the half line:
\begin{equation}\label{e:bbtt3}
\xymatrix@R=0pt{{\circ}\ar@{-}[r]&{\circ}\ar@{-}[r]&{\circ}\ar@{.}[r] &{\circ}\ar@{-}[r]&\ar@{.}[r]&\\
\Lambda_0 & \Lambda_{-1} & \Lambda_{-2} & \Lambda_{-m} &  }
\end{equation}
where $\Lambda_{-m}$ is the equivalence class of the lattice $=\pi^m
{\A_\infty}\oplus {\A_\infty}$ for each integer $m\ge 0$. Then
\[
\begin{split}
&U(\Lambda_{-m}\Lambda_{-(m+1)})=\{x\in {\bk}_\infty: \,\, |x|\le |\pi|^{m+1}\}=B_{0}(|\pi|^{m+1}),\\
&U(\Lambda_{-(m+1)}\Lambda_{-m})=\{x\in {\bk}_\infty: \,\, |x|\ge
|\pi|^{m}\}\cup\{\infty\}=B_{\infty}(|\pi|^{-m}).
\end{split}
\]
\end{example}

\begin{example}\label{example:fbtt3}
In general, let $M_j=[\pi^j {\bv}_1, x{\bv}_1+{\bv}_2]$ denote the
equivalence class of lattice generated by $\pi^j {\bv}_1$ and
$x{\bv}_1+{\bv}_2$ (the two vectors ${\bv_1}=(1,0)^T$,
${\bv_2}=(0,1)^T$ are the standard basis of
$V={\bk_\infty}\oplus{\bk_\infty}$) on the half line
(\ref{e:2.4.5}), where $x$ $=\sum_{j=j_0}^\infty c_j\pi^j$ $\in
{\bk}_\infty$ and $c_{j}\in {\Fq}$, then a little computation shows
that
\[
\begin{split}
U(M_j M_{j+1})=&\{y\in {\bk}_\infty: \,\, |y-x|\le |\pi|^{j+1}\}=B_x (|\pi|^{j+1}),\\
U(M_{j+1} M_j)=&\{y\in {\bk_\infty}: \,\, |y-x|=|\pi|^j\}\cup
\{y\in {\bk_\infty}: \,\, |y-x|=|\pi|^{j-1}\}\\
&\cup \cdots \cup \{y\in {\bk_\infty}: \,\, |y-x|=|\pi|^{j_0}\}\cup
\{y\in {\bk_\infty}: \,\, |y|\ge |\pi|^{j_0-1}\}\cup\{\infty\}\\
=&\{y\in {\bk}_\infty: \,\, |y-x|\ge |\pi|^j\}.
\end{split}
\]
\end{example}

Notice that the set of ends of $\mathcal T$ is in bijection with ${\P1}({\bk_\infty})$, as explained
in Section \ref{btt},  we conclude from the above examples that
\begin{prop}\label{prop:fbtt1}\begin{itemize}
\item[(1)] For any $j\in {\Z}$ and any $x\in {\bk_\infty}$, we have
\[
B_x(|\pi|^j)=U(M_{j-1} M_{j}), \qquad
B_{\infty}(|\pi|^j)=U(\Lambda_{j-1}\Lambda_{j}),
\]
where the notations are as in the Examples \ref{example:fbtt1}-\ref{example:fbtt3}.
\item[(2)] For any $e\in E_{\mathcal T}$, we have ${\P1}({\bk}_\infty)=U(e)\sqcup U(\bar{e})$. And $\infty\in U(e)$ if and only if $e\in E^{+}_{\mathcal T}$.
\end{itemize}
\end{prop}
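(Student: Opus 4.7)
The plan is to observe that Proposition \ref{prop:fbtt1} is largely a packaging of the three preceding examples, together with a brief combinatorial argument on the tree for part (2).

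For part (1), Example \ref{example:fbtt3} already establishes $U(M_j M_{j+1}) = B_x(|\pi|^{j+1})$ for an arbitrary $x \in \bk_\infty$ and any $j \in \Z$; the first identity is then the substitution $j \mapsto j-1$. For the second identity $B_\infty(|\pi|^j) = U(\Lambda_{j-1}\Lambda_j)$, I would split into the two cases $j \ge 1$ and $j \le 0$. For $j \ge 1$, Example \ref{example:fbtt1} with $m = j-1 \ge 0$ gives the claim directly. For $j \le 0$, writing $m = -j \ge 0$ and applying the computation in Example \ref{example:fbtt2} for $U(\Lambda_{-(m+1)}\Lambda_{-m}) = B_\infty(|\pi|^{-m})$ yields $U(\Lambda_{j-1}\Lambda_j) = B_\infty(|\pi|^j)$, once one checks that the $\Lambda_{-k}$ of Example \ref{example:fbtt2} coincides with $\Lambda_n$ at $n = -k$ under the convention of Section \ref{btt}.

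For part (2), I would invoke the standard fact that deleting a single (unoriented) edge from a tree disconnects it into two subtrees. Any end of $\mathcal{T}$ is an equivalence class of infinite non-back-tracking paths, and any such path enters and then remains in exactly one of these two components; hence we obtain a partition of the set of ends into $U(e) \sqcup U(\bar e)$. Invoking the bijection between ends of $\mathcal{T}$ and $\P1(\bk_\infty)$ from Section \ref{btt} yields the first claim. For the assertion about $\infty$, the set $E^{+}_{\mathcal T}$ was defined in (\ref{e:2.5}) so that the half-line from $\Lambda_0$ toward the $\infty$-end is positively oriented, and because $\mathcal{T}$ is connected, this orientation propagates uniquely to every edge by the connectedness remark following (\ref{e:2.5}). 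For arbitrary $e \in E^{+}_{\mathcal T}$, the terminal vertex of $e$ is therefore the one whose side of $e$ contains the unique non-back-tracking path to $\infty$, so $\infty \in U(e)$; the converse follows by applying the same reasoning to $\bar e$.

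The step that requires the most care is not the underlying argument but the bookkeeping: reconciling the three different labelings of vertices $\Lambda_n$, $\Lambda_{-m}$, and $M_j$ used in Examples \ref{example:fbtt1}, \ref{example:fbtt2}, and \ref{example:fbtt3}, and being explicit about the fact that the positive orientation extended from the $\infty$-end half-line (\ref{e:2.5}) propagates consistently through the rest of $\mathcal{T}$. Neither presents a genuine obstacle—both are standard consequences of the tree structure.
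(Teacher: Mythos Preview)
Your proposal is correct and matches the paper's approach: the paper gives no explicit proof, stating only that the proposition follows ``from the above examples'' and the bijection between ends of $\mathcal{T}$ and $\P1(\bk_\infty)$, and your write-up simply unpacks this by citing Examples~\ref{example:fbtt1}--\ref{example:fbtt3} for part~(1) and the tree structure for part~(2). Your added care about reconciling the labelings $\Lambda_n$, $\Lambda_{-m}$, $M_j$ and about how the orientation propagates is more detail than the paper offers, but it is accurate and does not diverge from the intended argument.
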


For an integer $n\ge 2$, a harmonic cocycle ${\mathfrak c}\in
C_{\har}(G,n)$ can be associated with a $\mu_{\mathfrak c}\in
(P^{(n-2)})^*={\rm Hom}_{\C_\infty}(P^{(n-2)}, {\C}_\infty)$ (the
space $P^{(n)}$ is defined in Definition \ref{defn:cpvm1}) as
follows. Given an $e\in E_{\mathcal T}$ and an integer $i$ with
$0\le i\le n-2$, we define
\begin{equation}\label{e:fbtt4}
\int_{U(e)} x^i d\mu_{\mathfrak c}(x)={\mathfrak
c}(e)(X^iY^{n-2-i}),
\end{equation}
and extend to $P^{(n-2)}$ by linearity. Proposition \ref{prop:fbtt1}
assures $\mu_{\mathfrak c}\in (P^{(n-2)})^*$.

\begin{lemma}[\cite{Te1991}]\label{lemma:fbtt1}
For ${\mathfrak c}\in C_{\har}(G,n)$,
$\gamma=\left(\begin{array}{cc}a&b\\c&d\end{array}\right)\in G$,
$e\in E_{\mathcal T}$, and $f$ a polynomial of degree at most
$n-2$, the following holds.
\begin{itemize}
\item[(1)] we have
\begin{equation}\label{e:fbtt5}
\int_{U(\gamma\star e)}f(x) d\mu_{\mathfrak
c}(x)=\int_{U(e)}\det(\gamma)\cdot f(\gamma
x)(cx+d)^{n-2}d\mu_{\mathfrak c}(x),
\end{equation}
where the action of $G\subset {\GL2}({\bk_\infty})$ on ${\P1}(\bk_\infty)$ is given by
$\gamma x=(ax+b)/(cx+d)$.
\item[(2)] \begin{equation}\label{e:fbtt5.5}\int_{{\P1}({\bk}_\infty)} f(x) d\mu_{\mathfrak c}(x)=0.\end{equation}
\item[(3)] There exists a constant $C>0$ such that for all $0\le i\le n-2$,
\begin{equation}\label{e:fbtt6}
\left|\int_{U(e)} (x-a)^i d\mu_{\mathfrak c}(x)\right|\le
C\rho(e)^{i-(n-2)/2},
\end{equation}
where $e\in E^{-}_{\mathcal T}$, $a\in U(e)$, and $\rho(e)=\sup_{x,\,y\in U(e)}|x-y|$ is
the diameter (or radius, the same in ultra-metric analysis) of $U(e)$.
\end{itemize}
\end{lemma}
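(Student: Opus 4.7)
The plan is to treat the three parts in order of increasing difficulty. Parts (1) and (2) follow directly from unpacking (\ref{e:fbtt4}) together with the two axioms defining a harmonic cocycle, while (3) is the substantive growth bound and will occupy most of the work. The shape of (\ref{e:fbtt6}) exactly matches the Vishik-type admissibility condition of Definition \ref{defn:cpvm2} with $h=(n-2)/2$, so the point is essentially to show that ${\mathfrak c}\in C_{\har}(G,n)$ produces an admissible distribution on ${\P1}(\bk_\infty)$.

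For (1), by linearity in $f$ it suffices to treat a single monomial $f(x)=x^i$ with $0\le i\le n-2$. The identity $f(\gamma x)(cx+d)^{n-2}=(ax+b)^i(cx+d)^{n-2-i}$ rewrites the right-hand side of (\ref{e:fbtt5}) as
\[
\det(\gamma)\int_{U(e)}(ax+b)^i(cx+d)^{n-2-i}\,d\mu_{\mathfrak c}(x).
\]
The integrand is a polynomial in $x$ of degree $\le n-2$, whose homogenization $(aX+bY)^i(cX+dY)^{n-2-i}$ lies in $V(n-1)$. Expanding this in the basis $\{X^jY^{n-2-j}\}$ and applying (\ref{e:fbtt4}) term by term, the above integral equals ${\mathfrak c}(e)\bigl((aX+bY)^i(cX+dY)^{n-2-i}\bigr)$. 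Multiplying by $\det(\gamma)$ and invoking the equivariance identity (\ref{e:fbtt2}) produces ${\mathfrak c}(\gamma\star e)(X^iY^{n-2-i})$, which by (\ref{e:fbtt4}) is $\int_{U(\gamma\star e)}x^i\,d\mu_{\mathfrak c}(x)$, the left-hand side of (\ref{e:fbtt5}).

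For (2), fix any edge $e\in E_{\mathcal T}$. By Proposition \ref{prop:fbtt1}(2) we have the disjoint decomposition ${\P1}(\bk_\infty)=U(e)\sqcup U(\bar e)$. Write $f(x)=F(x,1)$ for the unique $F\in V(n-1)$ homogeneous of degree $n-2$; using (\ref{e:fbtt4}) extended by linearity to all of $V(n-1)$, the integral decomposes as
\[
\int_{{\P1}(\bk_\infty)}f(x)\,d\mu_{\mathfrak c}(x)={\mathfrak c}(e)(F)+{\mathfrak c}(\bar e)(F),
\]
which vanishes by the antisymmetry condition ${\mathfrak c}(e)+{\mathfrak c}(\bar e)=0$ built into the definition of a harmonic cocycle.

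For (3), which I expect to be the main obstacle, my plan is to combine harmonicity with the $G$-equivariance (\ref{e:fbtt5}) and the finite-dimensionality of $C_{\har}(G,n)$. The estimate (\ref{e:fbtt6}) is translation-invariant in the choice of center $a\in U(e)$, so only the radius $\rho(e)$ genuinely enters. The quotient $G\backslash E_{\mathcal T}$ decomposes into a finite core subgraph plus finitely many infinite cusp rays; over the core there are only finitely many $G$-orbits, so (\ref{e:fbtt5}) immediately gives a uniform bound with no $\rho(e)$-dependence. The subtle case is $e$ lying deep in a cusp ray. Working first with $\Gamma={\GL2}(\A)$ and then descending to the finite-index subgroup $G$, Proposition \ref{prop:2.1} describes the stabilizer $\Gamma_m$ of $\Lambda_m$: it fixes the distinguished cusp edge $\Lambda_m\Lambda_{m+1}$ and acts transitively on the remaining $q$ edges at $\Lambda_m$, so by $\Gamma_m$-equivariance combined with the harmonicity relation $\sum_{e'\to\Lambda_m}{\mathfrak c}(e')=0$ those $q$ edge values are completely determined by ${\mathfrak c}(\Lambda_m\Lambda_{m+1})$. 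Iterating one step deeper into the cusp, the passage from $\Lambda_m$ to $\Lambda_{m+1}$ corresponds on $V(n-1)$ to the action of a matrix of determinant of absolute value $|\pi|$ and rescales the monomial $X^iY^{n-2-i}$ by exactly $|\pi|^{i-(n-2)/2}$ after the $\det(\gamma)$ twist from (\ref{e:fbtt2}) is taken into account; this is the source of the exponent $i-(n-2)/2$ of $\rho(e)$ in (\ref{e:fbtt6}). The main difficulty I anticipate is carrying out this cusp induction cleanly across every cusp of $G$ and verifying that the exponent one actually reads off is $i-(n-2)/2$ uniformly; this is the characteristic-$p$ shadow of the familiar ``half-weight'' normalization for harmonic cocycles attached to weight-$n$ cusp forms.
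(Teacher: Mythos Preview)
The paper does not give its own proof of this lemma; it simply cites \cite{Te1991}. Your arguments for (1) and (2) are correct and are precisely how one unwinds definitions (\ref{e:fbtt4}) and (\ref{e:fbtt2}) together with the harmonic-cocycle axioms.

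For (3), your outline contains the right ingredients---finite core, cusps, stabilizers $\Gamma_m$, harmonicity---but the decisive step is flawed. You propose that ``the passage from $\Lambda_m$ to $\Lambda_{m+1}$ corresponds on $V(n-1)$ to the action of a matrix of determinant of absolute value $|\pi|$,'' and that the $\det(\gamma)$-twist in (\ref{e:fbtt2}) then produces the factor $|\pi|^{i-(n-2)/2}$. But any such matrix lies in ${\GL2}(\bk_\infty)\setminus G$: every $\gamma\in G\subset{\GL2}(\A)$ has $|\det\gamma|=1$, so (\ref{e:fbtt2}) gives you no direct comparison between ${\mathfrak c}(e_m)$ and ${\mathfrak c}(e_{m+1})$. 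The relation you \emph{can} extract from harmonicity plus $\Gamma_m$-transitivity is an averaging identity on $V(1-n)$, not a scalar rescaling, and it does not hand you the exponent $i-(n-2)/2$.

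Teitelbaum's argument runs differently. One first shows that the growing unipotent part of $\Gamma_m$ forces the $\Gamma_m$-invariants in $V(1-n)\otimes\det$ to vanish once $m$ is large relative to $n$; together with the analogous statement at every cusp of $G$, this shows that ${\mathfrak c}$ is supported on only finitely many $G$-orbits of edges. The bound (\ref{e:fbtt6}) then reduces to a single $G$-orbit: writing $e=\gamma\ast e_j$ with $\gamma=\left(\begin{smallmatrix}a&b\\c&d\end{smallmatrix}\right)\in G$ and $e_j$ one of the finitely many representatives, one expands $(x-\alpha)^i$ via (\ref{e:fbtt5}), expresses $\rho(e)$ explicitly in terms of $|c|,|d|$, and checks that the balance between the factor $(cx+d)^{n-2}$ and $|\det\gamma|=1$ yields exactly $\rho(e)^{i-(n-2)/2}$. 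The half-weight exponent thus enters through the $G$-equivariance formula applied once to pull back to a fixed representative, not through a step-by-step descent along the cusp ray.
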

\begin{proof}
See \cite{Te1991}.
\end{proof}

Let $h$ be the smallest integer greater than or equal to $(n-2)/2$.

By Proposition \ref{prop:fbtt1}, every closed ball is of the form $U(e)$ for some
edge $e$ of the tree $\mathcal T$, therefore the inequality (\ref{e:fbtt6}) implies
that the measure $\mu_{\mathfrak c}$ is $h$-admissible on any
open compact subset $S$ of ${\bk}_\infty$. Hence the functions of $C^h(S)$ are
integrable against $\mu_{\mathfrak c}$ by Theorem \ref{thm:cpvm4}. For
$B_\infty(|\pi|^m)=U(\Lambda_{m-1}\Lambda_{m})$ $=\{x\in{\bk}_\infty:\,\,
|x|\ge |\pi|^{-m}\}\cup\{\infty\}$ where $m$ is an integer, we can
decompose it as
\begin{align*}
&B_\infty(|\pi|^m)\\
=&\,\left(\bigsqcup_{m\le s\le l}\{x\in{\bk}_\infty:\,\,
|x|=|\pi|^{-s}\}\right)\bigsqcup
\left(\{x\in{\bk}_\infty:\,\,|x|\ge |\pi|^{-(l+1)}\}\cup \{\infty\}\right)\\
=&\,\left(\bigsqcup_{c\in{\Fq}^{\ast}}\bigsqcup_{m\le s\le l}
B_{c\pi^{-s}}(|\pi|^{-s+1})\right)
\bigsqcup B_\infty(|\pi|^{l+1})
\end{align*}
for $l\ge m$ a large integer, and define for an integer $i> 0$
\begin{equation}\label{e:fbtt6.1}
\int_{B_\infty(|\pi|^m)} x^{-i}d\mu_{\mathfrak c}(x)
=\lim_{l\to \infty}\sum_{c\in{\Fq}^*}\sum_{m\le s\le l}
\int_{B_{c\pi^{-s}}(|\pi|^{-s+1})}x^{-i}d\mu_{\mathfrak c}(x).
\end{equation}
The limit of (\ref{e:fbtt6.1}) exists because
\begin{align*}
\left|\int_{B_{c\pi^{-l}}(|\pi|^{-l+1})}x^{-i}d\mu_{\mathfrak c}(x)\right|
&\le \left|\int_{B_{c\pi^{-l}}(|\pi|^{-l+1})}
\left(\dfrac{1}{c\pi^{-l}}\right)^i
\left(\dfrac{1}{1+(x-c\pi^{-l})/(c\pi^{-l})}\right)^i d\mu_{\mathfrak c}(x)\right|\\
&\,\le C\cdot |\pi|^{l(i+(n-2)/2)}
\end{align*}
where $C$ is a constant independent of the closed balls contained
in ${\P1}(\bk_\infty)$, and we have the estimate
\begin{equation}\label{e:fbtt6.2}
\left|\int_{B_\infty(|\pi|^m)} x^{-i}d\mu_{\mathfrak c}(x)\right|
\le C\cdot \left(|\pi|^m\right)^{(i+(n-2)/2)} \qquad\text{ for $i\ge 0$}
\end{equation}
where $C$ is a constant independent of the closed balls
contained in ${\P1}(\bk_\infty)$.

In Lemma \ref{lemma:fbtt1}, we take $e=\Lambda_{m-1}\Lambda_m$. Then
$\bar{e}=\Lambda_m\Lambda_{m-1}$ is the edge with opposite
orientation of that of $e$, and $U(e)=B_{\infty}(|\pi|^m)$,
$U(\bar{e})=B_0(|\pi|^{-(m-1)})$.  Therefore ${\mathfrak
c}(e)+{\mathfrak c}(\bar{e})=0$ implies
\begin{equation}\label{e:fbtt7}
\left|\int_{B_{\infty}(|\pi|^m)}x^i d\mu_{\mathfrak c}(x)\right|
 = \left|\int_{B_0(|\pi|^{-(m-1)})}
x^{i}d\mu_{\mathfrak c}(x)\right|
 \le C\cdot \left( |\pi|^m
\right)^{-i+(n-2)/2}.
\end{equation}
Combine (\ref{e:fbtt6.2}) and (\ref{e:fbtt7}), we have
\begin{prop}[\cite{Te1991}]\label{prop:fbtt1.1}
There exists a constant $C$ such that for $m\in {\mathbb Z}$ and $-\infty\le i\le n-2$
\begin{equation}\label{e:fbtt8}
\left|\int_{B_{\infty}(|\pi|^m)}x^i d\mu_{\mathfrak c}(x)\right|
\le C\cdot \left(|\pi|^m\right)^{-i+(n-2)/2}.
\end{equation}
\end{prop}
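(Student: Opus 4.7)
The proposition is essentially a repackaging of the two estimates (\ref{e:fbtt6.2}) and (\ref{e:fbtt7}), which together cover the full range of $i$: the former treats $i\le 0$ (given there in the form $x^{-i}$ with $i\ge 0$) and the latter treats $0\le i\le n-2$. My plan is therefore to give clean derivations of each of these two estimates from Lemma \ref{lemma:fbtt1}(3) and the harmonicity ${\mathfrak c}(e)+{\mathfrak c}(\bar e)=0$, and then simply take their maximum.

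For $0\le i\le n-2$, I would apply harmonicity to the edge $e=\Lambda_{m-1}\Lambda_m\in E^+_{\mathcal T}$, whose opposite $\bar e\in E^-_{\mathcal T}$ satisfies $U(\bar e)=B_0(|\pi|^{-(m-1)})$. Since $x^i$ is polynomial of degree $\le n-2$, the definition (\ref{e:fbtt4}) combined with ${\mathfrak c}(e)=-{\mathfrak c}(\bar e)$ yields $\int_{B_\infty(|\pi|^m)}x^i\,d\mu_{\mathfrak c}=-\int_{B_0(|\pi|^{-(m-1)})}x^i\,d\mu_{\mathfrak c}$, and Lemma \ref{lemma:fbtt1}(3) applied to $\bar e$ with base point $a=0$ bounds the latter by a constant multiple of $|\pi|^{(1-m)(i-(n-2)/2)}$. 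This matches the target $(|\pi|^m)^{-i+(n-2)/2}$ up to a factor $|\pi|^{i-(n-2)/2}$, whose absolute value is uniformly bounded over the finite range $0\le i\le n-2$ and so is absorbed into $C$.

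For $i<0$, the integrand is no longer polynomial, so I would reduce to the polynomial case via the annular decomposition of $B_\infty(|\pi|^m)$ used in (\ref{e:fbtt6.1}): each annulus $\{|x|=|\pi|^{-s}\}$ for $s\ge m$ splits into balls $B_{c\pi^{-s}}(|\pi|^{-s+1})$ with $c\in\Fq^*$, on which the strict inequality $|x-c\pi^{-s}|\le|\pi|^{-s+1}<|\pi|^{-s}=|c\pi^{-s}|$ permits the Taylor expansion $x^i=(c\pi^{-s})^i(1+(x-c\pi^{-s})/(c\pi^{-s}))^i$ as a convergent power series in $(x-c\pi^{-s})$ with coefficients of absolute value $\le 1$. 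Integrating term by term against $\mu_{\mathfrak c}$ and bounding each term via Lemma \ref{lemma:fbtt1}(3) applied to the corresponding edge in $E^-_{\mathcal T}$ gives a per-ball bound of order $|\pi|^{s(-i+(n-2)/2)}$, with all $k\ge 1$ terms suppressed by additional factors of $|\pi|$. The ultrametric sum over $c\in\Fq^*$ and $s\ge m$ is dominated by the $s=m$ contribution, producing the required $(|\pi|^m)^{-i+(n-2)/2}$.

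The main technical obstacle is to verify that the residual integral over $B_\infty(|\pi|^{l+1})$ in the decomposition (\ref{e:fbtt6.1}) truly vanishes as $l\to\infty$, so that the summation really represents $\int_{B_\infty(|\pi|^m)}x^i\,d\mu_{\mathfrak c}$ and not merely a partial sum. This relies on the non-archimedean decay furnished by Lemma \ref{lemma:fbtt1}(3) together with the positivity of the exponent $-i+(n-2)/2\ge 1$ for $i<0$ and $n\ge 2$, which makes $|\pi|^{l(-i+(n-2)/2)}\to 0$. Once this convergence is secured, (\ref{e:fbtt8}) is simply the maximum of the two regime bounds.
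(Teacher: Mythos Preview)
Your proposal is correct and follows essentially the same route as the paper: the paper simply says ``Combine (\ref{e:fbtt6.2}) and (\ref{e:fbtt7})'', and the two derivations you outline---harmonicity plus Lemma~\ref{lemma:fbtt1}(3) for $0\le i\le n-2$, and the annular decomposition with Taylor expansion for $i<0$---are exactly how (\ref{e:fbtt7}) and (\ref{e:fbtt6.2}) are obtained in the text preceding the proposition. One minor remark: in the paper the formula (\ref{e:fbtt6.1}) is taken as the \emph{definition} of $\int_{B_\infty(|\pi|^m)}x^{-i}\,d\mu_{\mathfrak c}$ for $i>0$, so there is no residual term to dispose of; your convergence check instead shows that this limit exists, which is precisely what the paper verifies immediately after (\ref{e:fbtt6.1}).
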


Therefore by recalling Theorem \ref{thm:cpvm4} of Section \ref{cpvm}, we have
\begin{cor}\label{cor:fbtt1}
The map
\[
\begin{array}{rcl}
C_{\har}(G,n)&\to &(C^h({\P1}(\bk_\infty))^*\\
{\mathfrak c}&\mapsto &\mu_{\mathfrak c}
\end{array}
\]
injects $C_{\har}(G,n)$ into $(C^h({\P1}(\bk_\infty))^*$ as the
subspace of $h$-admissible measures satisfying the equations
(\ref{e:fbtt5}) and (\ref{e:fbtt5.5}) for a polynomial $f$ of degree
at most $n-2$.
\end{cor}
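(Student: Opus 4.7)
The plan is to verify three facts about the map $\mathfrak{c}\mapsto\mu_{\mathfrak{c}}$: first, that its image lands in the space of $h$-admissible measures on ${\P1}(\bk_\infty)$, so that $\mu_{\mathfrak{c}}$ extends canonically from $P^{(n-2)}$ to $(C^h({\P1}(\bk_\infty)))^*$; second, that it is injective; and third, that its image coincides with the subspace cut out by (\ref{e:fbtt5}) and (\ref{e:fbtt5.5}).

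For the first point, Proposition \ref{prop:fbtt1} shows that every closed ball in ${\P1}(\bk_\infty)$ has the form $U(e)$ for some oriented edge $e$, with $U(e)\subset\bk_\infty$ precisely when $e\in E^{-}_{\mathcal T}$, in which case the diameter $\rho(e)$ equals the radius $|\pi|^l$. Combining inequality (\ref{e:fbtt6}) of Lemma \ref{lemma:fbtt1} with Proposition \ref{prop:fbtt1.1}, I obtain for every closed ball $B\subset{\P1}(\bk_\infty)$ of radius $|\pi|^l$ the uniform bound $\left|\int_B (x-a)^j\,d\mu_{\mathfrak c}(x)\right|\le C\,|\pi|^{l(j-(n-2)/2)}$ (with the usual reinterpretation at $\infty$). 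Since $h=\lceil(n-2)/2\rceil\ge (n-2)/2$ and $|\pi|^l\le 1$ for $l$ large, this gives $|\pi|^{l(j-(n-2)/2)}\le|\pi|^{l(j-h)}$, which is the $h$-admissibility condition (\ref{e:cpvm5}). Theorem \ref{thm:cpvm4} then yields the continuous extension of $\mu_{\mathfrak c}$ to $C^h({\P1}(\bk_\infty))$.

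For injectivity, note that for each edge $e$ the element $\mathfrak{c}(e)\in V(1-n)\otimes\det$ is determined by its values on the monomial basis $\{X^iY^{n-2-i}\}_{0\le i\le n-2}$ of $V(n-1)$; by the defining equation (\ref{e:fbtt4}) these values are exactly the integrals $\int_{U(e)}x^i\,d\mu_{\mathfrak c}(x)$, so $\mu_{\mathfrak c}$ recovers $\mathfrak c$.

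For the image characterization, parts (1) and (2) of Lemma \ref{lemma:fbtt1} immediately give that any $\mu_{\mathfrak{c}}$ satisfies (\ref{e:fbtt5}) and (\ref{e:fbtt5.5}). Conversely, given an $h$-admissible $\mu$ on ${\P1}(\bk_\infty)$ satisfying both identities for every polynomial $f$ of degree at most $n-2$, I define $\mathfrak{c}(e)(X^iY^{n-2-i}):=\int_{U(e)}x^i\,d\mu(x)$ and extend linearly in $V(n-1)$. The antisymmetry $\mathfrak{c}(e)+\mathfrak{c}(\bar e)=0$ follows from ${\P1}(\bk_\infty)=U(e)\sqcup U(\bar e)$ (Proposition \ref{prop:fbtt1}(2)) together with (\ref{e:fbtt5.5}); the vertex relation $\sum_{e\mapsto v}\mathfrak{c}(e)=0$ follows from the fact that, as $e$ ranges over the $q+1$ oriented edges with initial vertex $v$, the sets $U(e)$ partition ${\P1}(\bk_\infty)$, combined with (\ref{e:fbtt5.5}) and the antisymmetry just proved. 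Finally, the $G$-equivariance (\ref{e:fbtt1}) is extracted from (\ref{e:fbtt5}) by expanding $F(aX+bY,cX+dY)$ in the monomial basis and using $U(\gamma\star e)=\gamma\cdot U(e)$ under the fractional-linear action $\gamma x=(ax+b)/(cx+d)$: the factor $(cx+d)^{n-2}$ appearing in (\ref{e:fbtt5}) is exactly the twist needed to match $\det(\gamma)\cdot\mathfrak{c}(e)(\gamma\cdot F)$. I expect this last matching, reconciling the fractional-linear change of variables with the linear action on $V(n-1)$, to be the main technical step; it reduces to a direct polynomial identity once one writes $(aX+bY)^i(cX+dY)^{n-2-i}$ evaluated at $y=\gamma x$ and multiplied through by $(cx+d)^{n-2}$.
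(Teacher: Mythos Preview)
Your proof is correct and matches the paper's approach: the paper's own proof is a single sentence observing that every closed ball of ${\P1}(\bk_\infty)$ is of the form $U(e)$ by Proposition~\ref{prop:fbtt1}, so the $h$-admissibility follows from the estimates (\ref{e:fbtt6}) and (\ref{e:fbtt8}). You have simply filled in the details the paper leaves implicit, including the injectivity and the converse direction of the image characterization, which the paper does not spell out at all.
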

\begin{proof}
As every closed ball of ${\P1}(\bk_\infty)$ is of the form $U(e)$ for some $e\in E_{\mathcal T}$
by Proposition \ref{prop:fbtt1}, this is direct from the estimates (\ref{e:fbtt6}) and (\ref{e:fbtt8}).
\end{proof}

\begin{rem}\label{rem:fbtt2}
Due to the inequality (\ref{e:fbtt7}), not only we can integrate
$C^h$ functions on ${\P1}({\bk}_\infty)$ against $\mu_{\mathfrak
c}$, but also we can integrate those functions with a pole at
$\infty$ of order at most $n-2$. More generally, if the integral
$\int_{U(e)}f(x)d\mu_{\mathfrak c}(x)$ is defined for a $C^h$
function $f$ and an edge $e$ of $\mathcal T$, then we can also
formally define
\begin{equation}\label{e:fbtt8.1}
\int_{U(\bar{e})}f(x)d\mu_{\mathfrak c}(x):=
-\int_{U(e)}f(x)d\mu_{\mathfrak c}(x).
\end{equation}
We'll always apply this convention in the subsequent discussion.
Finally, we notice that although $\mu_{\mathfrak c}$ is constructed
as an element of $(P^{(n-2)})^*$ by definition, but we see that
$\mu_{\mathfrak c}$ is determined by its values on $P^{(h)}$.
\end{rem}

\begin{cor}\label{cor:fbtt3}
Equation (\ref{e:fbtt5}) induces an action of $G$ on $C_{\rm har}(G,n)$, when viewed as
a subspace of $(C^h({\P1}(\bk_\infty)))^*$:
\begin{equation}\label{e:fbttmeq}
\gamma\cdot d\mu_{\mathfrak c}(x)={\det}(\gamma)
(cx+d)^{n-2}d\mu_{\mathfrak c}(x) \quad\text{for ${\mathfrak c}\in
C_{\rm har}(G,n)$ and $\gamma
=\left(\begin{array}{cc}a&b\\c&d\end{array}\right)$}.
\end{equation}
\end{cor}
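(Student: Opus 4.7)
The plan is to deduce the transformation formula (\ref{e:fbttmeq}) as a direct consequence of equation (\ref{e:fbtt5}) of Lemma \ref{lemma:fbtt1} via a change of variables. First I would define the natural pullback action of $\gamma\in G$ on a functional $\mu\in (C^h({\P1}(\bk_\infty)))^*$ by
\[
(\gamma\cdot\mu)(f):=\mu(f\circ\gamma^{-1}),\qquad f\in C^h({\P1}(\bk_\infty)).
\]
This is well-defined because $\gamma^{-1}$ acts on ${\P1}(\bk_\infty)$ as a rigid analytic M\"obius transformation, so $f\mapsto f\circ\gamma^{-1}$ is a bounded operator on $C^h({\P1}(\bk_\infty))$ (using the $x\mapsto 1/x$ chart switch of Definition \ref{defn:cpvm3} to handle the pole of $\gamma^{-1}$); the group-action axioms follow by functoriality of pullback.

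Next I would verify the formula on the dense subspace $P^{(n-2)}$. Using the identification $\gamma\cdot U(e)=U(\gamma\star e)$, which is immediate from the bijection between ends and ${\P1}(\bk_\infty)$ chosen in Section \ref{btt} together with Proposition \ref{prop:fbtt1}, for any polynomial $f$ of degree at most $n-2$ and any edge $e\in E_{\mathcal T}$,
\[
(\gamma\cdot\mu_{\mathfrak c})(f\cdot\xi_{U(e)})=\int_{U(\gamma\star e)}f(\gamma^{-1}x)\,d\mu_{\mathfrak c}(x).
\]
Applying (\ref{e:fbtt5}) with the test function $f\circ\gamma^{-1}$ in place of $f$, the right-hand side collapses to $\int_{U(e)}\det(\gamma)(cy+d)^{n-2}f(y)\,d\mu_{\mathfrak c}(y)$. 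Since polynomials of degree at most $n-2$ supported on various $U(e)$ span $P^{(n-2)}$ by Definition \ref{defn:cpvm1}, the two functionals $\gamma\cdot\mu_{\mathfrak c}$ and $\det(\gamma)(cx+d)^{n-2}d\mu_{\mathfrak c}$ agree on all of $P^{(n-2)}$.

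Finally I would extend by continuity to $C^h({\P1}(\bk_\infty))$. By Proposition \ref{prop:fbtt1.1}, $\mu_{\mathfrak c}$ is $h$-admissible, hence by Theorem \ref{thm:cpvm4} it is a continuous functional on $C^h$; the pullback $\gamma\cdot\mu_{\mathfrak c}$ is continuous by the boundedness of the pullback operator, and multiplication by the polynomial $(cx+d)^{n-2}$ preserves $h$-admissibility thanks to the estimate (\ref{e:fbtt8}) at $\infty$ together with the convention of Remark \ref{rem:fbtt2}. As $P^{(n-2)}$ is dense in $C^h({\P1}(\bk_\infty))$ on each piece of a finite decomposition into closed balls, the identity propagates to the full space, yielding (\ref{e:fbttmeq}).

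The main obstacle lies in the control at the point $\infty$. The automorphy factor $(cx+d)^{n-2}$ grows polynomially of degree $n-2$ there, while the integral estimate (\ref{e:fbtt8}) of Proposition \ref{prop:fbtt1.1} for $\mu_{\mathfrak c}$ decays at exactly the reciprocal rate, so the twisted measure $\det(\gamma)(cx+d)^{n-2}d\mu_{\mathfrak c}(x)$ remains $h$-admissible only by virtue of this sharp cancellation. It is the harmonicity of $\mathfrak c$, routed through the formal convention (\ref{e:fbtt8.1}), that makes the integrals over neighbourhoods of $\infty$ compatible with those over affine balls; without it, the right-hand side of (\ref{e:fbttmeq}) would fail to define an admissible measure at all.
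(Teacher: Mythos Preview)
Your proposal is correct and is in fact considerably more detailed than what the paper offers: the paper gives no proof at all for this corollary, treating it as an immediate reformulation of equation~(\ref{e:fbtt5}). Your unpacking via the pullback action, verification on $P^{(n-2)}$, and extension to $C^h$ is the natural way to make this rigorous; the one minor imprecision is the claim that $P^{(n-2)}$ is dense in $C^h({\P1}(\bk_\infty))$---it is not in the norm sense, but this is harmless since (as Remark~\ref{rem:fbtt2} notes) an $h$-admissible measure is already determined by its restriction to $P^{(h)}\subset P^{(n-2)}$, which is exactly what Lemma~\ref{lemma:cpvm4} and Theorem~\ref{thm:cpvm4} establish.
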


There is a natural relation between the Bruhat-Titts tree $\mathcal T$ and the Drinfeld's upper half
plane $\Omega$, which we will describe below, more exposition about it can be found in \cite{Ge1996}.
The Bruhat-Tits tree $\mathcal T$ is in fact a pair of sets $V_{\mathcal T}$ and $E_{\mathcal T}$
with two maps $o,t: E_{\mathcal T}\to V_{\mathcal T}$ designating the origin $o(e)$ and the terminal $t(e)$
of an element $e$ of $E_{\mathcal T}$, and an orientation given in Section \ref{btt}.
Let ${\mathcal T}({\mathbb R})$ be the the realization of $\mathcal T$ consisting of a unit interval for
every edge (without considering orientation) of $\mathcal T$ which is glued at the extremities according to
the incidence relations of $\mathcal T$. The set of vertices of ${\mathcal T}({\mathbb R})$ is
${\mathcal T}(\Z)$, which corresponds to $V_{\mathcal T}$.

An edge $e$ of ${\mathcal T}({\mathbb R})$ corresponds to some edge $[L][L']$, where $L$ and $L'$ are ${\A}_\infty$-lattices
of $V={\bk_\infty}\oplus {\bk_\infty}$ satisfying $\pi L'\subset L\subset L'$ (which is the same
as saying that $d([L], [L'])=1$ in Section \ref{btt}). Then every point $P$ of $e$ can be formally expressed
as
\begin{equation}\label{e:fbtt9}
P=(1-t)[L]+t[L'] \quad\text{ for $0\le t\le 1$.}
\end{equation}
We have defined an action $\gamma\cdot [L]$ for $\gamma\in{\GL2}(\bk_\infty)$
and $L$ an ${\A}_\infty$-lattice in Section \ref{btt}, which is induced
from the ordinary action of ${\GL2}(\bk_\infty)$ on $V={\bk}_\infty\oplus{\bk}_\infty$. There is
another action
\begin{equation}\label{e:staraction}
\begin{array}{rcrcl}
{\GL2}({\bk}_\infty)&\times &V_{\mathcal T}&\rightarrow &V_{\mathcal T}\\
(\gamma&, &[L]) &\mapsto &\gamma\ast [L]:=({\gamma}^T)^{-1}\cdot [L]=[({\gamma}^T)^{-1}\cdot L]
\end{array}
\end{equation}
which induces an action ``$\ast$" of $\GL2({\bk_\infty})$ on ${\mathcal T}({\mathbb R})$.

To any lattice $L$ of $V$, we associate a norm $\nu_{L}$ by
\begin{equation}\label{e:fbtt10}
\nu_L(v) = \inf\{|x|:\,\, x\in {\bk_\infty}, v\in xL\}, \quad \text{ for any $v\in V$ }.
\end{equation}
This is equivalent to saying that $L$ is the unit ball
with respect to $\nu_L$. And for the point $P\in {\mathcal T}(\mathbb R)$ in (\ref{e:fbtt9}),
we define a norm $\nu_P$ by
\begin{equation}\label{e:fbtt11}
\nu_P(v)=\max\{\nu_L(v), q^t\nu_{L'}(v)\}, \quad \text{ for $v\in V$ }.
\end{equation}

Two norms $\nu_1$ and $\nu_2$ on $V$ are said to be similar if there
exists a real number $C>0$ such that $\nu_2=C \nu_1$. Similarity of norms on $V$
is an equivalence relation. The equivalence class of $\nu$ is denoted by $[\nu]$. It is easy to
see that equivalent lattices correspond to similar norms, and we have the well-defined
notions $[\nu_{[L]}]=[\nu_L]$ and $[\nu_P]$ according to (\ref{e:fbtt9})--(\ref{e:fbtt11}).

\begin{example}\label{example:fbtt4}
If we denote by $L={\A}_\infty\oplus {\A}_\infty$, and $L'=\pi{\A}_\infty\oplus {\A}_\infty$, then
for $v=(a,b)^T\in V={\bk}_\infty\oplus{\bk}_\infty$,
\[
\nu_L(v)=\max\{|a|, |b|\}, \qquad \nu_{L'}(v)=\max\{q\cdot|a|,
|b|\}.
\]
\end{example}

The group ${\GL2}({\bk}_\infty)$ acts on the space of similarity classes of norms on $V$ by
\begin{equation}\label{e:fbtt12}
(\gamma\cdot\nu)(v)=\nu(\gamma^{T}\cdot v)
\end{equation}
for $\gamma\in {\GL2}({\bk}_\infty)$, $v\in V$, and $\nu$ a norm on
$V$ (this induces an action $\gamma\cdot [\nu]$), where ${\gamma}^T$ denotes
the transpose of the matrix $\gamma$.

\begin{theorem}[Goldman-Iwahori, see \cite{Ge1996}]\label{thm:fbtt1} The association
of a point $P$ in ${\mathcal T}(\mathbb R)$ with $[\nu_P]$ establishes a canonical
${\GL2}(\bk_\infty)$-equivariant bijection between ${\mathcal T}(\mathbb R)$ (with respect to the ``$\ast$" action defined by the equation (\ref{e:staraction})) and the
space of similarity classes of norms on $V$.
\end{theorem}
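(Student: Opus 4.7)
The plan is to verify in turn that the assignment $P\mapsto[\nu_P]$ is well defined on $\mathcal T(\mathbb R)$, is $\GL2(\bk_\infty)$-equivariant, surjective, and injective, with the existence of a non-Archimedean splitting basis for any norm on $V$ as the main technical ingredient.

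First I would check that $[\nu_P]$ depends only on $P$. For a vertex, the identity $\nu_{\lambda L}(v)=|\lambda|^{-1}\nu_L(v)$ follows from (\ref{e:fbtt10}) by substituting $x\mapsto\lambda x$ in the defining infimum, so $[\nu_L]$ depends only on $[L]$. For an interior point $P=(1-t)[L]+t[L']$, the same geometric point of the edge can also be written as $(1-s)[L']+s[\pi^{-1}L]$ with $s=1-t$, and the chain $\pi L'\subset L\subset L'$ forces $\nu_{L'}\le\nu_L\le q\,\nu_{L'}$; a short calculation then shows the two norms produced by (\ref{e:fbtt11}) differ only by the scalar factor $q^t$. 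Equivariance is the one-line computation
\[
\nu_{(\gamma^T)^{-1}L}(v)=\inf\{|x|:\gamma^T v\in xL\}=\nu_L(\gamma^T v)=(\gamma\cdot\nu_L)(v),
\]
obtained by substituting $w=\gamma^T v$; the max-formula (\ref{e:fbtt11}) propagates this to interior points.

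For surjectivity, the key input is the splitting lemma: every norm $\nu$ on $V$ admits a basis $\{e_1,e_2\}$ with $\nu(ae_1+be_2)=\max(|a|\nu(e_1),|b|\nu(e_2))$. Given such a basis, I rescale $\nu$ so that $\nu(e_2)=1$, then replace $e_1$ by a suitable $\pi^k e_1$ to achieve $\nu(e_1)=q^{-t}$ with $t\in[0,1]$. Taking $L=\A_\infty e_1+\A_\infty e_2$ and $L'=\A_\infty Te_1+\A_\infty e_2$, one checks $\pi L'\subset L\subset L'$ and computes
\[
\nu_P(ae_1+be_2)=\max(|a|,q^t|b|)=q^t\,\nu(ae_1+be_2),\qquad P=(1-t)[L]+t[L'],
\]
so $[\nu_P]=[\nu]$. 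Injectivity is then settled by reconstructing $P$ from the decreasing filtration $M_\nu(r):=\{v\in V:\nu(v)\le r\}$ as $r$ varies in $q^{\mathbb R}$: for any splitting basis the jumps occur exactly at $\nu(e_1)$ and $\nu(e_2)$, and on either side of each jump $M_\nu$ is an $\A_\infty$-lattice whose homothety class is intrinsic to $[\nu]$; these classes recover the neighboring vertex classes while the scale-invariant ratio $\nu(e_1)/\nu(e_2)$ recovers the interior parameter.

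The principal obstacle is the splitting lemma. In rank two I would fix an arbitrary basis $f_1,f_2$ of $V$ and minimize $\mu(c):=\nu(f_2-cf_1)$ over $c\in\bk_\infty$; the function $\mu$ is Lipschitz in $c$ (by the ultrametric reverse triangle inequality) and tends to infinity as $|c|\to\infty$ (for $|c|\nu(f_1)>\nu(f_2)$ strict ultrametric forces $\mu(c)=|c|\nu(f_1)$), so by local compactness of $\bk_\infty$ its infimum is attained at some $c_0$. Setting $e_1=f_1$ and $e_2=f_2-c_0 f_1$, the ultrametric inequality already yields $\nu(ae_1+be_2)\le\max(|a|\nu(e_1),|b|\nu(e_2))$ with automatic equality whenever $|a|\nu(e_1)\ne|b|\nu(e_2)$; in the remaining case $|a|\nu(e_1)=|b|\nu(e_2)$ with $b\ne0$, the required equality translates (after dividing by $|b|$) into $\mu(c_0-a/b)\ge\mu(c_0)$, which is exactly the minimality of $c_0$.
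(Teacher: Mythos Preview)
The paper does not supply a proof of this theorem; it is stated as a citation to Goldman--Iwahori via \cite{Ge1996}, with no argument given. There is therefore no proof in the paper to compare yours against.

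Your outline is the standard rank-two proof and is essentially correct. The well-definedness and equivariance verifications are accurate (your ``differ by $q^t$'' is actually $q^{-t}$, but of course the similarity class is the same). Your proof of the splitting lemma---minimizing $\mu(c)=\nu(f_2-cf_1)$ over the locally compact field $\bk_\infty$ and observing that the equal-norm case reduces precisely to the minimality of $c_0$---is the classical argument and is correctly executed. The surjectivity computation with $L=\A_\infty e_1+\A_\infty e_2$ and $L'=\A_\infty Te_1+\A_\infty e_2$ checks out; indeed $\nu_P(ae_1+be_2)=\max(|a|,q^t|b|)=q^t\nu(ae_1+be_2)$ as you claim. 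The injectivity sketch via the balls $M_\nu(r)$ is the right idea but is the one place that deserves another sentence: to make it airtight, note that the collection of homothety classes $\{[M_\nu(r)]:r>0\}$ is intrinsic to $[\nu]$, consists of a single class precisely when $P$ is a vertex, and otherwise consists of exactly the two adjacent classes $[L],[L']$, with the successive jump radii having ratios $\{q^t,q^{1-t}\}$, which recovers $t$ up to the swap $t\leftrightarrow 1-t$ corresponding to relabelling the endpoints of the edge.
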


To any $z\in \Omega$ we also associate $[\nu_z]$ of the norm $\nu_z$ on $V$ by
\begin{equation}\label{e:fbtt13}
\nu_z((u,v)):=|uz+v| \quad\text{ for $u,v\in {\bk}_\infty$.}
\end{equation}
Due to Theorem \ref{thm:fbtt1}, we can identify ${\mathcal T}(\mathbb R)$ with the space of
similarity classes of norms on $V$. Therefore (\ref{e:fbtt13}) gives the ``building map"
\[
\begin{array}{rcl}
\lambda: \Omega &\to &{\mathcal T}(\mathbb R)\\
            z   &\mapsto &[\nu_z].
\end{array}
\]
The image of $\lambda$ is ${\mathcal T}(\mathbb Q)$.
\begin{prop}[\cite{Ge1996}]\label{prop:fbtt2}
The building map $\lambda$ is ${\GL2}(\bk_\infty)$-equivariant, that is, it satisfies
\[
\lambda(\gamma z)=[\nu_{\gamma z}]=\gamma\cdot [\nu_z]
\]
for any $z\in \Omega$ and any $\gamma\in {\GL2}(\bk_\infty)$.
\end{prop}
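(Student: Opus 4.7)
The plan is a direct calculation showing that the norms $\nu_{\gamma z}$ and $\gamma \cdot \nu_z$ differ only by a positive real scalar, hence define the same similarity class in $\mathcal{T}(\mathbb{R})$.

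First I would fix $\gamma = \left(\begin{smallmatrix}a&b\\c&d\end{smallmatrix}\right) \in {\GL2}(\bk_\infty)$ and $z \in \Omega$, and note that $cz+d \ne 0$: if $c=0$ then $d \ne 0$ by invertibility, and if $c \ne 0$ then $cz+d = 0$ would give $z = -d/c \in \bk_\infty$, contradicting $z \in \Omega = {\C}_\infty - \bk_\infty$. Thus $|cz+d|$ is a well-defined positive real number.

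Next I would evaluate $\nu_{\gamma z}$ on an arbitrary vector $(u,v) \in V$ using the definition (\ref{e:fbtt13}) together with the formula $\gamma z = (az+b)/(cz+d)$:
\[
\nu_{\gamma z}((u,v)) = \left|u\cdot\frac{az+b}{cz+d} + v\right| = \frac{|(au+cv)z + (bu+dv)|}{|cz+d|}.
\]
On the other hand, the action (\ref{e:fbtt12}) gives
\[
(\gamma \cdot \nu_z)((u,v)) = \nu_z(\gamma^T (u,v)) = \nu_z((au+cv,\, bu+dv)) = |(au+cv)z + (bu+dv)|.
\]
Comparing, I get the pointwise identity $\nu_{\gamma z} = |cz+d|^{-1}\,(\gamma\cdot \nu_z)$ on $V$. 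Since $|cz+d|^{-1}$ is a fixed positive real number independent of $(u,v)$, the two norms are similar, so $[\nu_{\gamma z}] = [\gamma\cdot \nu_z] = \gamma \cdot [\nu_z]$. Invoking Theorem \ref{thm:fbtt1} to identify ${\mathcal T}({\mathbb R})$ with similarity classes of norms, this is exactly the desired equality $\lambda(\gamma z) = \gamma \cdot \lambda(z)$.

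There is essentially no obstacle beyond bookkeeping. The only subtlety is to remember that the transpose appears in the action (\ref{e:fbtt12}), which is exactly what is needed to line up the pair $(au+cv,\, bu+dv)$ with the numerator produced from the Möbius action on $z$; without the transpose the off-diagonal entries would be swapped and the identification would fail. I would flag this explicitly in the write-up to make clear why the definitions of the two actions are chosen compatibly.
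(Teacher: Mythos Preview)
Your proposal is correct and is precisely the direct verification the paper alludes to; the paper's own proof simply says ``This is a direct verification'' and, like you, flags that the transpose in the action (\ref{e:fbtt12}) is the one point of care.
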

\begin{proof}
This is a direct verification. Notice that the notion of the action
of ${\GL2}(\bk_\infty)$ given by (\ref{e:fbtt12}) is a little
different from \cite{Ge1996}, the group ${\GL2}(\bk_\infty)$ always
acts on the left in this paper.
\end{proof}

\begin{example}\label{example:fbtt5}
Let $\Lambda_n$ be the equivalence class of the ${\A}_\infty$-lattice
$L_n=\pi^{-n}{\A}_\infty\oplus {\A}_\infty$ as in Section \ref{btt}.
\begin{itemize}
\item[(1)] ${\lambda}^{-1}(\Lambda_{n})=\left\{z\in \Omega:\,\,
|z-c\pi^n| = |\pi^n| \text{ for all $c\in {\F}_q$ }\right\}$.
\item[(2)] Let $\tilde{e}_n=\Lambda_n\Lambda_{n+1}\in {\mathcal T}(\mathbb R)$ be the edge with the
two end points $\Lambda_n$ and $\Lambda_{n+1}$, and $e_n$ the
edge with the two end points removed. Then
\[
\begin{split}
&{\lambda}^{-1}(e_n)=\{z\in \Omega:\,\, |\pi^{n+1}|<|z|<|\pi^n|\},\\
&{\lambda}^{-1}(\tilde{e}_n)=\left\{z\in \Omega:\,\,
\begin{array}{ll}
&|\pi^{n+1}|\le |z|\le |\pi^n|,\\
&|z-c\pi^n|\ge |\pi^n|, |z-c\pi^{n+1}|\ge |\pi^{n+1}| \text{ for all
$c\in {\F}_q^*$ }
\end{array}
\right\}.
\end{split}
\]
\end{itemize}
These relations can be verified directly from the definition of the
building map $\lambda$. A natural way to understand them is by considering
some analytic reduction of $\Omega$ to a locally finite scheme over ${\Fq}$,
which gives rise to the Bruhat-Tits tree $\mathcal T$, as in \cite{Ge1996}.
\end{example}

Now we identify $\mathcal T$ with its realization ${\mathcal T}(\mathbb R)$ and let $e_n$ be as in
Example \ref{example:fbtt5}. Let $e\in E_{\mathcal T}$ with the two endpoints removed and
$D(e)={\lambda}^{-1}(e)$. Then
\[
D:=D(e_0)=\{z\in \Omega:\,\, |\pi|<|z|<1\}
\]
is a fundamental region of $\Omega$. Since $\tilde{e}_0=\Lambda_0\Lambda_1$ is a
fundamental domain of the action of ${\GL2}(\bk_\infty)^{+}$ on $\mathcal T$ as explained in
Section \ref{btt}, we have
\begin{equation}\label{e:edgerelation}
e={\gamma}\ast e_0=({\gamma}^{-1})^T\cdot e_0
\end{equation}
and there is an isomorphism
\begin{equation}\label{e:parametrization}
\begin{array}{rcl}
  D& \rightarrow &D(e)\\
   v&\mapsto &z=\gamma v
\end{array}
\end{equation}
given by an element $\gamma\in {\GL2}(\bk_\infty)^{+}$, from Theorem
\ref{thm:fbtt1} and Proposition \ref{prop:fbtt2}. Then any rigid
differential form $f dz$ on $D(e)$ can be expressed by
\begin{equation}\label{e:fbtt15}
\sum_{i\in {\Z}} a_i v^i dv.
\end{equation}
We have the following definitions with respect to rigid differential
forms (see \cite{Te1991}).
\begin{itemize}
\item Define ${\rm Res}_e fdz:=a_{-1}$, where $a_{-1}$ is the coefficient of $v^{-1}dv$ in the equation (\ref{e:fbtt15}).
Here $e\in E_{\mathcal T}$ is an oriented edge which is related to $e_0$ by (\ref{e:edgerelation}).
\item Suppose $f$ is a modular form of weight $n$ for an arithmetic subgroup $G\subset {\GL2}(\bk)$. Then
define a harmonic cocycle ${\rm Res}(f)$ of weight $n$ by
\begin{equation}\label{e:residue}
{\rm Res}(f)(e)(X^iY^{n-2-i}):={\rm Res}_e \,z^if(z)dz.
\end{equation}
\end{itemize}
In the above definitions and the discussions throughout the rest of
this paper, we will not distinguish an edge with the end points or
without the end points, since the results will be the same. It can
be verified that ${\rm Res}(f)$ is harmonic and $G$-equivariant
(note that we need to use the action ``$\ast$" of $G$ on ${\mathcal T}$
defined in (\ref{e:staraction}) for ${\rm Res}(f)$ to be
$G$-equivariant).

\begin{theorem}[Teitelbaum, \cite{Te1991}]\label{thm:fbtt2}
Let ${\mathfrak c}\in C_{\rm har}(G,n)$ be a harmonic cocycle of
weight $n\ge 2$ for the arithmetic group $G$, and let
$\mu_{\mathfrak c}$ be the associated measure. Define $f$ by the
integral
\[
f(z)=\int_{{\mathbb P}^1(\bk_\infty)} \frac{1}{z-x} d\mu_{\mathfrak
c}(x).
\]
Then $f$ is a rigid analytic cusp form for $G$ of weight $n$ and
${\rm Res}(f)={\mathfrak c}$. Moreover, let $e\in E^-_{\mathcal T}$
(thus $\infty\not\in U(e)$) and $a$ be a center of $U(e)$. Then for
all $m\in{\Z}$, we have
\begin{equation}\label{e:fbtt21}
{\rm Res}_e(z-a)^mf(z)dz=\int_{U(e)}(x-a)^md\mu_{\mathfrak c}(x).
\end{equation}
\end{theorem}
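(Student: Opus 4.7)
The plan is to verify, in turn: (i) $f$ is rigid analytic on $\Omega$; (ii) $f$ transforms as a modular form of weight $n$ and type $0$ under $G$; (iii) $f$ vanishes at every cusp; and (iv) the residue formula (\ref{e:fbtt21}) holds. The identity ${\rm Res}(f) = {\mathfrak c}$ then drops out of (iv) as the special case $a = 0$, $m = i \in \{0, \ldots, n-2\}$, unpacked against the definition (\ref{e:residue}) of ${\rm Res}(f)$.

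For (i), on each affinoid of the good covering of $\Omega$ I would pick an edge $e$ with $D(e)$ containing it, split ${\P1}(\bk_\infty) = U(e) \sqcup U(\bar e)$, and expand $1/(z-x)$ as a power series in $(z-a)^{\pm 1}$ adapted to each piece; the admissibility bounds (\ref{e:fbtt6}) and (\ref{e:fbtt8}) force the termwise integrated series to converge, producing a rigid analytic function on the affinoid. For (ii), Corollary \ref{cor:fbtt3} together with the identity
\[
\gamma z - \gamma x = \frac{\det(\gamma)(z-x)}{(cz+d)(cx+d)}
\]
yields
\[
f(\gamma z) = (cz+d) \int_{{\P1}(\bk_\infty)} \frac{(cx+d)^{n-1}}{z-x}\,d\mu_{\mathfrak c}(x).
\]
Expanding $(cx+d)^{n-1} = \sum_{k=0}^{n-1}\binom{n-1}{k}c^k(x-z)^k(cz+d)^{n-1-k}$ isolates the $k=0$ summand as $(cz+d)^n f(z)$; the remaining summands are integrals over ${\P1}(\bk_\infty)$ of polynomials in $x$ of degree at most $n-2$, all vanishing by Lemma \ref{lemma:fbtt1}(2), so $f|[\gamma]_{n,0} = f$.

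For (iii), as $z \to \infty$ in $\Omega$, split the integral at an edge $e^{\infty}$ with $\infty \in U(e^{\infty})$; the bound (\ref{e:fbtt8}) (together with the convention (\ref{e:fbtt6.1}) for integration near $\infty$) handles the $U(e^{\infty})$-piece, while on $U(\overline{e^{\infty}})$ the geometric expansion $1/(z-x) = \sum_{k \geq 0} x^k/z^{k+1}$ produces a series in $1/z$ whose would-be constant term is killed by Lemma \ref{lemma:fbtt1}(2), so $f(z) \to 0$. For the other cusps, conjugate by an element of ${\GL2}(\bk)$ carrying $\infty$ to the given cusp to reduce to this case.

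The main obstacle is (iv). Fix $e \in E^-_{\mathcal T}$ with $U(e) = B_a(r)$, parametrize $D(e)$ by $z = \gamma v$ with $v \in D = \{|\pi| < |v| < 1\}$ and $z - a = \rho v$ for a suitable $\rho$ of absolute value $r$, and expand $1/(z-y)$ as a Laurent series in $v$ on each piece of ${\P1}(\bk_\infty) = U(e) \sqcup U(\bar e)$. After termwise integration, the coefficient of $v^{-m-1}$ in $f(z)$ (viewed as a function of $v$) collects into $\rho^{-(m+1)} \int_{U(e)}(y-a)^m d\mu_{\mathfrak c}(y)$, so that the coefficient of $v^{-1}\,dv$ in
\[
(z-a)^m f(z)\,dz = \rho^{m+1} v^m f(\gamma v)\,dv
\]
is $\int_{U(e)}(y-a)^m d\mu_{\mathfrak c}(y)$, which is (\ref{e:fbtt21}). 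The delicate points are (a) selecting a parametrization $\gamma$ that correctly respects the orientation of $e \in E^-$ (this typically forces an element of ${\GL2}(\bk_\infty)$ of odd-determinant valuation, since ${\GL2}(\bk_\infty)^+$ preserves the orientation of $\tilde{e}_0$, so that the standard $v^{-1}$-coefficient picks up $\int_{U(e)}$ rather than $\int_{U(\bar e)}$), and (b) reassembling the contributions from the sub-balls which decompose $U(e) = B_a(r\pi) \sqcup \bigsqcup_{c \in \F_q^*} B_{a+c\rho}(r\pi)$ into a single integral over $U(e)$, for which one invokes the harmonic cocycle relations at the vertex of $e$ lying on the inner circle of $D(e)$.
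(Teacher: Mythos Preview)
The paper does not prove this theorem; it is quoted from Teitelbaum \cite{Te1991} and followed only by Remark~\ref{rem:fbtt3}, which clarifies the convention for $m<0$ in (\ref{e:fbtt21}). So there is no in-paper proof to compare against; your sketch is a standalone reconstruction of Teitelbaum's argument.

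On its merits, your outline is correct in structure and identifies the right tools: the change-of-variable identity $\gamma z-\gamma x=\det(\gamma)(z-x)/((cz+d)(cx+d))$ together with Corollary~\ref{cor:fbtt3} for the transformation law, Lemma~\ref{lemma:fbtt1}(2) to kill the polynomial remainder terms, and the admissibility bounds (\ref{e:fbtt6}), (\ref{e:fbtt8}) for convergence and cuspidality. The deduction of ${\rm Res}(f)=\mathfrak c$ from (\ref{e:fbtt21}) with $a=0$, $0\le m\le n-2$ is exactly how (\ref{e:residue}) and (\ref{e:fbtt4}) match up.

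Two points that would need tightening in a full proof. First, in step (iv) you write the parametrization of $D(e)$ as $z-a=\rho v$, i.e., an affine map; but the paper's definition of ${\rm Res}_e$ in (\ref{e:edgerelation})--(\ref{e:parametrization}) uses a general $\gamma\in{\rm GL}_2(\bk_\infty)^+$ acting by M\"obius transformation, and you should either check that for $e\in E^-_{\mathcal T}$ one may choose $\gamma$ upper-triangular (so the action is affine on $D$), or carry the Jacobian $d(\gamma v)/dv$ through the computation. Second, your point (a) on orientation is well taken but slightly mis-stated: ${\rm GL}_2(\bk_\infty)^+$ acts transitively on \emph{unoriented} edges (this is what the paper uses at (\ref{e:edgerelation})), and the sign of ${\rm Res}_e$ under orientation reversal is built into the Laurent-coefficient definition; you do not need to leave ${\rm GL}_2(\bk_\infty)^+$, but you do need to verify that the resulting sign matches the harmonicity condition $\mathfrak c(e)+\mathfrak c(\bar e)=0$.
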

\begin{rem}\label{rem:fbtt3}
In the equation (\ref{e:fbtt21}) of Theorem \ref{thm:fbtt2}, the
integral $\int_{U(e)}(x-a)^md\mu_{\mathfrak c}(x)$ is certainly well
defined for all $m\ge 0$, because $\infty\not\in U(e)$ and so
$(x-a)^m$ is an analytic function on $U(e)$ when $m\ge 0$. For
$m<0$, the function $(x-a)^m$ has a pole at the point $a\in U(e)$,
therefore the integral can not be defined directly. But the function
$(x-a)^m$ is analytic on $U(\bar{e})$, thus we set
\begin{equation*}
\int_{U(e)}(x-a)^md\mu_{\mathfrak
c}(x):=-\int_{U(\bar{e})}(x-a)^md\mu_{\mathfrak c}(x)
\end{equation*}
as Remark \ref{rem:fbtt2} explains.
\end{rem}
\begin{theorem}[Teitelbaum, \cite{Te1991}]\label{thm:fbtt3}
Let $G$ be an arithmetic subgroup of ${\GL2}(\bk)$, and let $S_n(G)$
denote the space of rigid cusp forms of weitht $n\ge 2$ for $G$.
Then the maps
\[
{\rm Res}: S_n(G)\rightarrow C_{\rm har}(G,n)
\]
and
\[
\begin{array}{rcl}
\iota: C_{\rm har}(G,n)&\rightarrow &S_n(G)\\
      {\mathfrak c}&\mapsto &\int_{{\mathbb P}^1(\bk_\infty)}\frac{1}{z-x}
      d\mu_{\mathfrak c}(x)
\end{array}
\]
are mutually inverse isomorphisms.
\end{theorem}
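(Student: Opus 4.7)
The plan is to verify that both maps are well-defined and establish the two composition identities. For ${\rm Res}$: given $f\in S_n(G)$, harmonicity of ${\rm Res}(f)$ follows from the sum-of-residues theorem on rigid affinoids applied to the preimage under $\lambda$ of a star neighborhood of each vertex $v$ of ${\mathcal T}$; the $G$-equivariance condition \eqref{e:fbtt1} follows from the change-of-variable formula for residues under $z\mapsto\gamma z$ combined with the modular transformation law of $f$; and the cusp condition ensures the relevant residues are controlled at the ends of ${\mathcal T}$. Both well-definedness of $\iota$ and the identity ${\rm Res}\circ\iota={\rm id}_{C_{\har}(G,n)}$ are already supplied by Theorem \ref{thm:fbtt2} via equation \eqref{e:fbtt21} specialized to $m=0,1,\ldots,n-2$.

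It therefore remains to establish $\iota\circ{\rm Res}={\rm id}_{S_n(G)}$. My strategy is to prove directly the reconstruction identity
\[
f(z_0)=\int_{{\P1}(\bk_\infty)}\frac{1}{z_0-x}\,d\mu_{{\rm Res}(f)}(x)\quad\text{for all }z_0\in\Omega,\ f\in S_n(G).
\]
Fix $z_0\in\Omega$ and a compact connected subgraph $X\subset{\mathcal T}$ with $\lambda(z_0)$ in its interior. The sum-of-residues theorem applied to the rigid differential $f(z)(z-z_0)^{-1}dz$ on the affinoid $\lambda^{-1}(X)$ gives $f(z_0)$ as a signed sum $\sum_{e\in\partial X}{\rm Res}_e\bigl(f(z)(z-z_0)^{-1}dz\bigr)$. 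For each boundary edge $e$ of $X$ one has $z_0\notin D(e)$, so $1/(z-z_0)$ expands as a convergent series in the local parameter on $D(e)$; substituting this expansion into the residue and applying the defining relation \eqref{e:fbtt4} of $\mu_{{\rm Res}(f)}$ rewrites each boundary residue as $-\int_{U(e)}(z_0-x)^{-1}\,d\mu_{{\rm Res}(f)}(x)$ (after collecting the geometric series into the Stieltjes kernel).

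Passing to the limit as $X$ exhausts ${\mathcal T}$, the cusp decay of $f$ is designed to control the contributions from edges abutting the ends of ${\mathcal T}$: these are the only potentially problematic terms, since $1/(z_0-x)$ is unbounded as $x\to\infty$ and requires estimate \eqref{e:fbtt8} for $\mu_{{\rm Res}(f)}$ to compensate. The surviving sum converges to the right-hand side of the reconstruction identity. Applying this identity to $h=f-\iota({\rm Res}(f))\in S_n(G)$, which satisfies ${\rm Res}(h)=0$ by the first composition identity, forces $h\equiv 0$ and hence $\iota\circ{\rm Res}={\rm id}_{S_n(G)}$. The main obstacle is the rigorous justification of this limit: uniformly estimating the boundary residue contributions by balancing the growth bounds \eqref{e:fbtt6}--\eqref{e:fbtt8} against the cusp decay of $f$, and carefully tracking orientation conventions so that the signs in the boundary sum match the sign in \eqref{e:fbtt8.1}. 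In effect, this is the harmonicity argument from step one run backwards on an exhaustion of the tree.
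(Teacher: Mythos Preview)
The paper does not supply its own proof of Theorem~\ref{thm:fbtt3}; it is quoted as a result of Teitelbaum with a reference to \cite{Te1991}, so there is nothing in the paper to compare your sketch against.

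On the substance of your sketch, there is a genuine gap in the argument for $\iota\circ{\rm Res}={\rm id}$. The crucial step is your claim that for a boundary edge $e$ (with $z_0\notin D(e)$),
\[
{\rm Res}_e\,\frac{f(z)}{z-z_0}\,dz \;=\; -\int_{U(e)}\frac{1}{z_0-x}\,d\mu_{{\rm Res}(f)}(x),
\]
obtained by expanding $1/(z-z_0)=-\sum_{m\ge 0}(z-a)^m/(z_0-a)^{m+1}$ and invoking \eqref{e:fbtt4}. But \eqref{e:fbtt4} only identifies ${\rm Res}_e\,(z-a)^m f(z)\,dz$ with $\int_{U(e)}(x-a)^m\,d\mu_{{\rm Res}(f)}(x)$ for $0\le m\le n-2$. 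For $m>n-2$ the right-hand integral is \emph{defined} by the $h$-admissible extension of Section~\ref{cpvm} (a Riemann-sum limit built from the moments of order $\le h$), not by any residue of $f$; there is no a~priori reason it should equal ${\rm Res}_e\,(z-a)^m f(z)\,dz$. Indeed, the equality for all $m\in\Z$ is exactly the content of \eqref{e:fbtt21}, and Theorem~\ref{thm:fbtt2} establishes it only for forms already of the shape $\iota({\mathfrak c})$. Assuming it for an arbitrary $f\in S_n(G)$ amounts to assuming $f=\iota({\rm Res}(f))$, which is the conclusion you want.

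Your final move---applying the reconstruction identity to $h=f-\iota({\rm Res}(f))$---does not rescue this: the same unjustified per-edge identity is needed for $h$. What is actually required is an independent proof that ${\rm Res}$ is injective (if ${\rm Res}_e\,z^i h(z)\,dz=0$ for all edges $e$ and all $0\le i\le n-2$, then $h=0$). That is the step Teitelbaum supplies in \cite{Te1991}, and it does not fall out of the sum-of-residues formalism alone. Note also that if the boundary-residue argument you outline genuinely worked, it would never use the hypothesis ${\rm Res}(h)=0$, which should be a warning sign; and no exhaustion is really in play, since for any finite subtree $X$ the sets $U(e)$ over outward boundary edges already partition ${\P1}(\bk_\infty)$.
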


The set ${\mathcal U}_1=\{x\in{\bk}_\infty:\,\, |x-1|<1\}$ of $1$-units of ${\bk}_\infty$ can be given as
\begin{equation*}
{\mathcal U}_1=U(M_0M_1)
\end{equation*}
where $M_0M_1$ is the edge with initial vertex $M_0=[\bv_1,\bv_2]$ and terminal vertex
$M_1=[\pi \bv_1, \bv_1+\bv_2]$, as explained in Example \ref{example:fbtt3}. For $G=\Gamma={\GL2}(\A)$
and $f\in S_n(\Gamma)$, an analogue of $L$-functions of classical cusp forms is $L_f(s):{\Z}_p\to {\C}_\infty$
given by (see \cite{Go1992})
\begin{equation}\label{e:lfunction}
L_f(s)=\int_{{\mathcal
U}_1}x^s\dfrac{d\mu_f(x)}{x}=\int_{U(M_0M_1)}x^{s-1}d\mu_f(x)
\end{equation}
where $\mu_f$ is the measure associated to the cocycle
${\iota}^{-1}(f)$ under the correspondence of Theorem
\ref{thm:fbtt3}.  In general, let $G$ be an arithmetic subgroup of
${\GL2}(\bk)$ and $f\in S_n(G)$ with $n\ge 2$. Then for an edge $e$
of $\mathcal T$, we define
\begin{equation}\label{e:fbtt22}
L(f;e;j)=\int_{U(e)} x^{j-1} d\mu_f(x)\quad\text{ for $j\in{\Z}$}
\end{equation}
In the case when the edge $e=M_0M_1$, the function $L(f;e;j): {\Z}\to {\C}_\infty$
in the variable $j$ can be extended to the function $L(f;e;s)$ for $s\in{\Z}_p$ by
continuity, which is the $L$-function $L_f(s)$ defined in (\ref{e:lfunction}).


\section{The measure associated to the Drinfeld
discriminant}\label{mdd}

Example \ref{example:3.4} explains how we get the Drinfeld discriminant $\Delta$.
In this section, we will compute $C_{\rm har}(\Gamma,q^2-1)$, hence will get
the measure associated to the Drinfeld discriminant $\Delta$.

\begin{theorem}[Goss, \cite{Go1980a}]\label{thm:mdd1}
The dimension of the space of cusp forms of weight $m=m_0(q-1)$ and type $0$ of the arithmetic subgroup
$\Gamma$ is $[m_0/(q+1)]$. The dimension of the space of all Drinfeld modular forms of weight
$m=m_0(q-1)$ and type $0$ of the arithmetic subgroup $\Gamma$ is $[m_0/(q+1)]+1$.
\end{theorem}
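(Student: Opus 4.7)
The plan is to reduce both dimension counts to a monomial-counting exercise using the structure theorem for modular forms of type $0$ on $\Gamma$ recalled as part~(1) of Theorem~\ref{thm:3.1}.

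First I would recall that by Theorem~\ref{thm:3.1}(1) the graded ring of modular forms on $\Gamma$ of type $0$ is the polynomial ring ${\C}_\infty[g,\Delta]$, with $g$ of weight $q-1$ and $\Delta$ of weight $q^2-1=(q-1)(q+1)$, both of type $0$. Consequently, every modular form of type $0$ and weight $m=m_0(q-1)$ is a ${\C}_\infty$-linear combination of monomials $g^a\Delta^b$ with $a,b\ge 0$ and
\[
a(q-1)+b(q-1)(q+1)=m_0(q-1),\quad\text{i.e. } a+b(q+1)=m_0.
\]
Because ${\C}_\infty[g,\Delta]$ is a polynomial ring, these monomials are linearly independent, so the dimension of the weight-$m$, type-$0$ modular forms equals the number of pairs $(a,b)\in {\Z}_{\ge 0}^2$ with $a+b(q+1)=m_0$. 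Parametrizing by $b\in\{0,1,\dots,[m_0/(q+1)]\}$ (which forces $a=m_0-b(q+1)\ge 0$), this count is exactly $[m_0/(q+1)]+1$, giving the second assertion.

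For the cusp-form statement, I would identify which of these monomials vanish at the unique cusp $\infty$ of $\Gamma$. By Theorem~\ref{thm:3.1}(3), $g=(T^q-T)E_{q-1}$ has a nonzero constant term in its $t$-expansion at $\infty$ (indeed $g(\infty)\ne 0$ since $E_{q-1}(\infty)\ne 0$), whereas $\Delta$ is a cusp form: for instance, the factorization $\Delta=-(P_{q+1,1})^{q-1}$ from Theorem~\ref{thm:3.1}(2) together with the fact that $P_{q+1,1}$ is the Poincar\'e cusp form of Example~\ref{example:3.3} shows that $\Delta$ vanishes at $\infty$. Therefore a monomial $g^a\Delta^b$ is a cusp form if and only if $b\ge 1$, and the admissible indices become $b\in\{1,\dots,[m_0/(q+1)]\}$, yielding exactly $[m_0/(q+1)]$ cusp forms in the basis and hence the first claimed dimension.

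The only subtlety, and what I would regard as the main thing to justify carefully, is the non-vanishing of $g$ and the vanishing of $\Delta$ at the cusp $\infty$; once these are established, linear independence of the monomials reduces the whole theorem to the arithmetic count above. Since $\Gamma$ has a single cusp (visible from the fundamental domain~(\ref{e:2.1})), there are no further cusp conditions to impose, so no additional cancellation or codimension issues arise.
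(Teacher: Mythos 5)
Your argument is correct, but note that the paper itself gives no proof of this statement: Theorem~\ref{thm:mdd1} is simply quoted from Goss \cite{Go1980a}, so there is no ``paper proof'' to compare against. What you have done is derive the dimension formulas from Theorem~\ref{thm:3.1}(1) (the ring of type-$0$ forms is ${\C}_\infty[g,\Delta]$) together with the facts that $g$ is not a cusp form (Example~\ref{example:3.2}, since $E_{q-1}$ has nonzero constant term at $\infty$) and that $\Delta=-(P_{q+1,1})^{q-1}$ is a cusp form (Theorem~\ref{thm:3.1}(2) and Example~\ref{example:3.3}). The monomial count $a+b(q+1)=m_0$ giving $[m_0/(q+1)]+1$ solutions is right, and since $\Gamma$ has a single cusp, vanishing there is one linear condition whose kernel omits exactly the unique monomial with $b=0$ (namely $g^{m_0}$), yielding $[m_0/(q+1)]$ for the cusp forms; your phrasing ``a monomial is a cusp form iff $b\ge1$'' is fine because the cusp-expansion constant term of a general linear combination is $c_{m_0,0}\,g(\infty)^{m_0}$, so the condition is precisely $c_{m_0,0}=0$. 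The only caveat worth flagging is a potential circularity relative to the original source: Theorem~\ref{thm:3.1}(1) is attributed to the same paper of Goss, and in \cite{Go1980a} the dimension formulas and the ring structure are established together, so your derivation is a legitimate deduction within the logical framework of \emph{this} paper but should not be read as an independent proof of Goss's results.
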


By the above theorem and Theorem \ref{thm:fbtt3}, we see that $C_{\rm har}(\Gamma, q^2-1)$,
as well as $S_{q^2-1}(\Gamma)$, is a one dimensional vector space over ${\C}_\infty$. The space $S_{q^2-1}(\Gamma)$
of cusp forms is generated by the Drinfeld discriminant $\Delta$.
Let ${\mathfrak c}_{\Delta}={\rm Res}(\Delta)$ be the harmonic cocycle in Theorem \ref{thm:fbtt3}.
Then ${\mathfrak c}_{\Delta}$ is a basis of $C_{\rm har}(\Gamma, q^2-1)$.
We can calculate ${\mathfrak c}_{\Delta}$ by
\[
{\mathfrak c}_{\Delta}(e)(X^i Y^{q^2-3-i})={\rm Res}_e z^i\Delta(z) dz
\]
for $0\le i\le q^2-3$ from the
definition (\ref{e:residue}) of harmonic cocycles coming from modular forms. Since the
harmonic cocycle associated to $\Delta$ is $\Gamma$-invariant and the
half line (\ref{e:2.1}) consisting of $\Lambda_m$ ($m\ge 0$) is the fundamental domain of
$\Gamma\backslash {\mathcal T}$, the values ${\rm Res}_{e_m} z^i\Delta(z) dz$
(for $0\le i\le q^2-3$) determine ${\mathfrak c}_{\Delta}$, where $e_m$ is
the oriented edge $\Lambda_m\Lambda_{m+1}$.

At first, we are going to calculate ${\rm Res}_{e_0} z^i\Delta(z)dz$,
the residue of $z^i\Delta(z)dz$ over the region $D=\lambda^{-1}(e_0)$
$=\{z\in \Omega: |\pi|<|z|<1\}$. As $\Delta(z)=(T^{q^2}-T)E_{q^2-1}(z)+(T^q-T)^q E_{q-1}(z)^{q+1}$,
we will expand $(E_{q-1}(z))^{q+1}$ and $E_{q^2-1}(z)$ on the region $D$ in the following.
\begin{align}
E_{q-1}(z)&=\sum\limits_{(0,0)\ne (c,d)\in {\A}^2} \dfrac{1}{(cz+d)^{q-1}}\notag\\
&=\sum_1\dfrac{1}{(cz+d)^{q-1}} + \sum_2\dfrac{1}{(cz+d)^{q-1}}\label{e:mdd1}
\end{align}
where $\sum\limits_1$ is taken over all $(c,d)\in{\A}^2$ such that $d\ne 0$ and $\deg(d)\ge \deg(c)$,
and $\sum\limits_2$ is taken over all $(c,d)\in{\A}^2$ such that $c\ne 0$ and
$\deg(c)\ge \deg(d)+1$. Here we set $\deg(0)=-\infty$ by convention. We'll compute
$\sum\limits_1$ and $\sum\limits_2$ separately.
\begin{align}
&S_1:=\sum_1\dfrac{1}{(cz+d)^{q-1}}\notag\\ 
=&\sum_1 \left(\dfrac{1}{1+\frac{c}{d}z}\right)^{q-1}\cdot \dfrac{1}{d^{q-1}}\notag\\
=&\sum_1 \dfrac{1}{d^{q-1}}\left(\sum\limits_{i\ge 0}\left(\dfrac{c}{d}\right)^i z^i\right)^{q-1}\notag\\
=&\sum_1 \dfrac{1}{d^{q-1}}\left(1-\dfrac{c}{d}z+\left(\dfrac{c}{d}\right)^q z^q-\left(\dfrac{c}{d}\right)^{q+1} z^{q+1}
+\left(\dfrac{c}{d}\right)^{2q} z^{2q}-\left(\dfrac{c}{d}\right)^{2q+1} z^{2q+1}+\cdots\right)\label{e:mdd2}
\end{align}
where we have applied the identity in the following lemma.
\begin{lemma}\label{lem:mdd1}
For $B\in {\C}_\infty$, we have
\[
\left(\sum\limits_{i\ge 0} B^i z^i\right)^{q-1}=1-Bz+B^q z^q-B^{q+1}z^{q+1}
+B^{2q}z^{2q}-B^{2q+1}z^{2q+1}+\cdots
\]
\end{lemma}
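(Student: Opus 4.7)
The plan is to substitute $w=Bz$ and recognize both sides as rational functions in the formal power series ring ${\C}_\infty[[z]]$, then use the characteristic $p$ Frobenius identity to equate them.

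First I would rewrite the left-hand side. Setting $w=Bz$, we have the geometric series identity
\[
\sum\limits_{i\ge 0} B^i z^i = \sum\limits_{i\ge 0} w^i = \frac{1}{1-w}
\]
as an element of ${\C}_\infty[[z]]$, so the left-hand side becomes $(1-w)^{-(q-1)}$.

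Next I would rewrite the right-hand side. Grouping the terms in pairs $(w^{kq},-w^{kq+1})$ for $k\ge 0$, the right-hand side is
\[
\sum\limits_{k\ge 0}\bigl(w^{kq}-w^{kq+1}\bigr)=(1-w)\sum\limits_{k\ge 0}w^{kq}=\frac{1-w}{1-w^q}.
\]
The key step is the characteristic $p$ Frobenius identity $1-w^q=(1-w)^q$, which holds in ${\C}_\infty[[z]]$ since ${\C}_\infty$ has characteristic $p$ and $q$ is a power of $p$. This gives
\[
\frac{1-w}{1-w^q}=\frac{1-w}{(1-w)^q}=\frac{1}{(1-w)^{q-1}},
\]
matching the left-hand side.

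There is no real obstacle here; the only thing to justify carefully is that these manipulations take place in the formal power series ring ${\C}_\infty[[z]]$ (so convergence is automatic) and that the Frobenius identity applies because $q$ is a power of the characteristic. I would close by noting that substituting $w=Bz$ back gives the claimed equality coefficient by coefficient.
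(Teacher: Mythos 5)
Your proof is correct and is essentially the paper's argument in closed form: the paper's one-line proof verifies directly that $\left(\sum_{i\ge 0}B^iz^i\right)^q=\left(\sum_{i\ge 0}B^iz^i\right)\cdot(\text{RHS})$, which after the substitution $w=Bz$ is exactly your identity $\frac{1}{(1-w)^q}=\frac{1}{1-w}\cdot\frac{1-w}{1-w^q}$ combined with the Frobenius relation $(1-w)^q=1-w^q$. Your geometric-series/telescoping packaging is just a tidier way of organizing the same computation in ${\C}_\infty[[z]]$, so there is nothing to add.
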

\begin{proof}
We directly verify that
\[
\left(\sum\limits_{i\ge 0} B^i z^i\right)^q
=\left(\sum\limits_{i\ge 0} B^i z^i\right)\left(
1-Bz+B^qz^q-B^{q+1}z^{q+1}+B^{2q}z^{2q}-B^{2q+1}z^{2q+1}+\cdots\right)
\]
and get the identity in the lemma.
\end{proof}
And also for the sum $\sum\limits_2$\,,
\begin{align}
&S_2:=\sum_2\dfrac{1}{(cz+d)^{q-1}}\notag\\
=&\sum_2 \dfrac{1}{(cz)^{q-1}}\left(\dfrac{1}{1-\frac{d}{cz}}\right)^{q-1}\notag\\
=&\sum_2\dfrac{1}{(cz)^{q-1}}\left(\sum\limits_{i\ge 0}\left(\dfrac{d}{c}\right)^i
\dfrac{1}{z^i}\right)^{q-1}\notag\\
=&\sum_2\dfrac{1}{(cz)^{q-1}}\left(1-\dfrac{d}{c}\dfrac{1}{z}+\left(\dfrac{d}{c}\right)^q\dfrac{1}{z^q}
-\left(\dfrac{d}{c}\right)^{q+1}\dfrac{1}{z^{q+1}}+
\left(\dfrac{d}{c}\right)^{2q}\dfrac{1}{z^{2q}}-\left(\dfrac{d}{c}\right)^{2q+1}\dfrac{1}{z^{2q+1}}+\cdots\right)
\label{e:mdd3}
\end{align}
In the summation (\ref{e:mdd2}), we notice that
\begin{itemize}
\item the condition $\deg(d)\ge \deg(c)$ for $\sum\limits_1$ ensures that $d\ne 0$,
since otherwise $\deg(c)\le \deg(d)=-\infty$ implies $c=0$;
\item the summation can be taken term by term inside the parenthesis; and we have
\[
\sum\limits_1 \dfrac{1}{d^{q-1}}\left(\dfrac{c}{d}\right)^n z^n
=\sum\limits_{\substack{(c,\,d)\in {\A}^2\\ \deg(c)\le \deg(d)}}\dfrac{1}{d^{q-1}}\left(\dfrac{c}{d}\right)^n z^n
= 0 \text{\,\, if $(q-1)\not |\,n$, }
\]
since we can pull out the leading coefficients of $d\in {\A}={\Fq}[T]$ in the
terms of a fixed degree of $d$ and sum these
terms up.
\end{itemize}
Therefore the summation $S_1$ in (\ref{e:mdd2}) equals
\begin{align}
\sum\limits_{\substack{(c,\,d)\in {\A}^2\\\deg(d)\ge \deg(c)}}& \dfrac{1}{d^{q-1}}\bigg(1+
  \left(\dfrac{c}{d}\right)^{(q-1)q}z^{(q-1)q}+\left(\dfrac{c}{d}\right)^{2(q-1)q}z^{2(q-1)q}+\cdots \notag\\
  \qquad\qquad &-\left(\dfrac{c}{d}\right)^{(q-1)^2}z^{(q-1)^2}-\left(\dfrac{c}{d}\right)^{(2q-1)(q-1)}z^{(2q-1)(q-1)}
  -\left(\dfrac{c}{d}\right)^{(3q-1)(q-1)}z^{(3q-1)(q-1)}-\cdots\bigg) \label{e:mdd4}
\end{align}
where the terms with negative sign occur because we need to solve out the equations
\[
mq+1=n(q-1)
\]
as
\[
\begin{cases}
m=-1+t(q-1),\\
n=-1+tq,
\end{cases}
\text{ for $t\ge 1$.}
\]
In the same way, we notice that in summation (\ref{e:mdd3})
\begin{itemize}
\item $c$ can not be equal to $0$, otherwise $\deg(d)\le \deg(c)-1=-\infty$ implies
$d=0$,
\item we can take the summation term by term inside the parenthesis, and
\[
\sum\limits_2 \dfrac{1}{c^{q-1}}\left(\dfrac{d}{c}\right)^n \dfrac{1}{z^{n+q-1}}
=\sum\limits_{\substack{(c,\,d)\in {\A}^2\\ \deg(c)\ge \deg(d)+1}}
\dfrac{1}{c^{q-1}}\left(\dfrac{d}{c}\right)^n \dfrac{1}{z^{n+q-1}}
= 0 \text{\,\, if $(q-1)\not |\,n$, }
\]
for example, the first term is
\[
\sum\limits_{\substack{(c,\,d)\in{\A}^2\\\deg(c)\ge \deg(d)+1}}\dfrac{1}{c^{q-1}z^{q-1}}
=\sum\limits_{\substack{(c,\,d)\in{\A}^2\\\deg(c)=0,\,d=0}}\dfrac{1}{c^{q-1}z^{q-1}}
=-\dfrac{1}{z^{q-1}}.
\]
\end{itemize}
Therefore the summation $S_2$ in (\ref{e:mdd3}) equals
\begin{align}
\sum\limits_{\substack{(c,\,d)\in {\A}^2\\\deg(c)\ge \deg(d)+1}}& \dfrac{1}{c^{q-1}z^{q-1}}\bigg(1+
  \left(\dfrac{d}{c}\right)^{(q-1)q}\dfrac{1}{z^{(q-1)q}}+\left(\dfrac{d}{c}\right)^{2(q-1)q}\dfrac{1}{z^{2(q-1)q}}
  +\cdots \notag\\
  -\left(\dfrac{d}{c}\right)&^{(q-1)^2}\dfrac{1}{z^{(q-1)^2}}-\left(\dfrac{d}{c}\right)^{(2q-1)(q-1)}
  \dfrac{1}{z^{(2q-1)(q-1)}}
  -\left(\dfrac{d}{c}\right)^{(3q-1)(q-1)}\dfrac{1}{z^{(3q-1)(q-1)}}-\cdots\bigg). \label{e:mdd5}
\end{align}
Hence we get the Laurent series expansion of $E_{q-1}(z)$:
\[
E_{q-1}(z)=S_1 + S_2
\]
which is the summation of (\ref{e:mdd4}) and (\ref{e:mdd5}). In the
equation (\ref{e:3.9}) for $\Delta(z)$, the part with $E_{q^2-1}(z)$
doesn't contribute to the residue of $z^i\Delta(z)dz$ on the region
$D=\{z\in \Omega:\,\,|\pi|<|z|<1\}$ for $0\le i\le q^2-3$, since
\[
E_{q^2-1}(z)=\sum\limits_{(0,0)\ne
(c,\,d)\in{\A}^2}\dfrac{1}{(cz+d)^{q^2-1}}\,,
\]
and the residue of $z^i dz/(cz+d)^{q^2-1}$ is $0$ as long as $0\le
i\le q^2-3$. Therefore we need only compute the coefficient of
$dz/z$ in $(T^q-T)^q E_{q-1}(z)^{q+1}z^i dz$. So consider
\begin{equation}\label{e:mdd6}
E_{q-1}(z)^{q+1}=(S_1+S_2)^{q+1}=S_1^{q+1}+S_1^qS_2+S_2^qS_1+S_2^{q+1}.
\end{equation}
It is clear that the terms $S_1^{q+1}$ and $S_2^{q+1}$ in (\ref{e:mdd6}) don't
contribute to the wanted residues. Hence we need only compute the expansions
of $S_1^q S_2$ and $S_2^q S_1$. As
\begin{align}
S_1^q S_2=\left(\sum\limits_{\substack{(c,\,d)\in{\A}^2\\\deg(d)\ge\deg(c)}}
\dfrac{1}{d^{(q-1)q}}\left(\sum\limits_{i=0}^\infty \left(\dfrac{c}{d}\right)^{i(q-1)q^2} z^{i(q-1)q^2}
      -\sum\limits_{j=1}^\infty \left(\dfrac{c}{d}\right)^{(jq-1)(q-1)q} z^{(jq-1)(q-1)q}\right)\right)\cdot\notag\\
\left(\sum\limits_{\substack{(c,\,d)\in{\A}^2\\\deg(c)\ge\deg(d)+1}}\dfrac{1}{c^{q-1}z^{q-1}}
\left(\sum\limits_{s=0}^\infty \left(\dfrac{d}{c}\right)^{s(q-1)q}\dfrac{1}{z^{s(q-1)q}}
-\sum\limits_{t=1}^\infty \left(\dfrac{d}{c}\right)^{(tq-1)(q-1)}\dfrac{1}{z^{(tq-1)(q-1)}}\right)\right),\label{e:mdd7}
\end{align}
we consider all possible powers $(1/z)^m$ for $m\in{\Z}$ in the above expansion:
\begin{itemize}
\item[(1).] for $1\le m=-i(q-1)q^2+s(q-1)q+q-1\le q^2-2$ with $i\ge 0$ and $s\ge 0$, then the only solution for $m$ is
    $m=q-1$ when $s=iq$ for $i\ge 0$; in general, $m=(q-1)+l(q-1)q$ for $l\in {\Z}$;
\item[(2).] for $1\le m=-i(q-1)q^2+(tq-1)(q-1)+q-1\le q^2-2$ with $i\ge 0$ and $t\ge 1$, then the only solution for $m$
    is $m=(q-1)q$ when $t=iq+1$ for $i\ge 0$; in general, $m=l(q-1)q$ for $l\in{\Z}$;
\item[(3).] for $1\le m=-(jq-1)(q-1)q+s(q-1)q+q-1\le q^2-2$ with $j\ge 1$ and $s\ge 0$, then the only solution for $m$ is
    $m=q-1$ when $s=jq-1$ for $j\ge 1$; in general, $m=(q-1)+l(q-1)q$ for $l\in{\Z}$;
\item[(4).] for $1\le m=-(jq-1)(q-1)q+(tq-1)(q-1)+q-1\le q^2-2$ with $j\ge 1$ and $t\ge 1$, then the only solution for $m$
    is $m=(q-1)q$ when $t=jq$ for $j\ge 1$; in general, $m=l(q-1)q$ for $l\in{\Z}$.
\end{itemize}
Therefore we get from (\ref{e:mdd7})
\begin{equation}\label{e:mdd8}
S_1^qS_2=\dfrac{\Upsilon_{10}}{z^{q-1}} + \dfrac{\Xi_{11}}{z^{(q-1)q}}+
\sum\limits_{0\ne i\in{\Z}}\dfrac{\Upsilon_{1i}}{z^{q-1+i(q-1)q}} + \sum\limits_{1\ne j\in{\Z}}\dfrac{\Xi_{1j}}{z^{j(q-1)q}}.
\end{equation}
where
\begin{align*}
\Upsilon_{10}=&\sum\limits_{i=0}^\infty\Bigg(\sum\limits_{\substack{(c,\,d)\in {\A}^2\\\deg(d)\ge\deg(c)}}
  \dfrac{1}{d^{(q-1)q}}\left(\dfrac{c}{d}\right)^{i(q-1)q^2}\Bigg)\cdot
  \Bigg(\sum\limits_{\substack{(c,\,d)\in{\A}^2\\\deg(c)\ge\deg(d)+1}}
  \dfrac{1}{c^{q-1}}\left(\dfrac{d}{c}\right)^{i(q-1)q^2}\Bigg)\\
&-\sum\limits_{j=1}^\infty\Bigg(\sum\limits_{\substack{(c,\,d)\in{\A}^2\\\deg(d)\ge\deg(c)}}
  \dfrac{1}{d^{(q-1)q}}\left(\dfrac{c}{d}\right)^{(jq-1)(q-1)q}\Bigg)\cdot
  \Bigg(\sum\limits_{\substack{(c,\,d)\in{\A}^2\\\deg(c)\ge\deg(d)+1}}
  \dfrac{1}{c^{q-1}}\left(\dfrac{d}{c}\right)^{(jq-1)(q-1)q}\Bigg),\\
\Xi_{11}=&-\sum\limits_{i=0}^\infty\Bigg(\sum\limits_{\substack{(c,\,d)\in {\A}^2\\\deg(d)\ge\deg(c)}}
  \dfrac{1}{d^{(q-1)q}}\left(\dfrac{c}{d}\right)^{i(q-1)q^2}\Bigg)\cdot
  \Bigg(\sum\limits_{\substack{(c,\,d)\in{\A}^2\\\deg(c)\ge\deg(d)+1}}
  \dfrac{1}{c^{q-1}}\left(\dfrac{d}{c}\right)^{((iq+1)q-1)(q-1)}\Bigg)\\
&+\sum\limits_{j=1}^\infty\Bigg(\sum\limits_{\substack{(c,\,d)\in{\A}^2\\\deg(d)\ge\deg(c)}}
  \dfrac{1}{d^{(q-1)q}}\left(\dfrac{c}{d}\right)^{(jq-1)(q-1)q}\Bigg)\cdot
  \Bigg(\sum\limits_{\substack{(c,\,d)\in{\A}^2\\\deg(c)\ge\deg(d)+1}}
  \dfrac{1}{c^{q-1}}\left(\dfrac{d}{c}\right)^{(jq^2-1)(q-1)}\Bigg).
\end{align*}
We compute the expansion of $S_2^q S_1$ in the same way. In the following expansion of
\begin{align}
&S_2^q S_1\notag \\
=&\sum\limits_{\substack{(c,\,d)\in{\A}^2\\\deg(c)\ge\deg(d)+1}}\dfrac{1}{c^{(q-1)q}z^{(q-1)q}}
\left(\sum\limits_{s=0}^\infty \left(\dfrac{d}{c}\right)^{s(q-1)q^2}\dfrac{1}{z^{s(q-1)q^2}}
-\sum\limits_{t=1}^\infty \left(\dfrac{d}{c}\right)^{(tq-1)(q-1)q}\dfrac{1}{z^{(tq-1)(q-1)q}}\right)\notag\\
&\cdot\sum\limits_{\substack{(c,\,d)\in{\A}^2\\\deg(d)\ge\deg(c)}}
\dfrac{1}{d^{(q-1)}}\left(\sum\limits_{i=0}^\infty \left(\dfrac{c}{d}\right)^{i(q-1)q} z^{i(q-1)q}
      -\sum\limits_{j=1}^\infty \left(\dfrac{c}{d}\right)^{(jq-1)(q-1)} z^{(jq-1)(q-1)}\right),\label{e:mdd9}
\end{align}
we find all possible powers $(1/z)^m$ for $m\in{\Z}$:
\begin{itemize}
\item[(1).] for $1\le m=s(q-1)q^2+(q-1)q-i(q-1)q\le q^2-2$ with $i\ge 0$ and $s\ge 0$, the only solution is $m=(q-1)q$ when $i=sq$ for $s\ge 0$; in general, $m=l(q-1)q$ for $l\in{\Z}$;
\item[(2).] for $1\le m=s(q-1)q^2+(q-1)q-(jq-1)(q-1)\le q^2-2$ with $j\ge 1$ and $s\ge 0$, the only solution is $m=q-1$ when $j=sq-1$ for $s\ge 1$; in general, $m=(q-1)+l(q-1)q$ for $l\in{\Z}$;
\item[(3).] for $1\le m=(tq-1)(q-1)q+(q-1)q-i(q-1)q\le q^2-2$ with $i\ge 0$ and $t\ge 1$, the only solution is $m=(q-1)q$ when $i=tq-1$ for $t\ge 1$; in general, $m=l(q-1)q$ for $l\in{\Z}$;
\item[(4).] for $1\le m=(tq-1)(q-1)q+(q-1)q-(jq-1)(q-1)\le q^2-2$ with $j\ge 1$ and $t\ge 1$, the only solution is $m=q-1$ when $j=tq$ for $t\ge 1$; in general, $m=(q-1)+l(q-1)q$ for $l\in{\Z}$.
\end{itemize}
Therefore we get from (\ref{e:mdd9})
\begin{equation}\label{e:mdd10}
S_2^q S_1=\dfrac{\Upsilon_{20}}{z^{q-1}} + \dfrac{\Xi_{21}}{z^{(q-1)q}}+
\sum\limits_{0\ne i\in{\Z}}\dfrac{\Upsilon_{2i}}{z^{q-1+i(q-1)q}} + \sum\limits_{1\ne j\in{\Z}}\dfrac{\Xi_{2j}}{z^{j(q-1)q}}.
\end{equation}
where
\begin{align*}
\Upsilon_{20}=&-\sum\limits_{s=1}^\infty\Bigg(\sum\limits_{\substack{(c,\,d)\in{\A}^2\\\deg(c)\ge\deg(d)+1}}
  \dfrac{1}{c^{(q-1)q}}\left(\dfrac{d}{c}\right)^{s(q-1)q^2}\Bigg)\cdot
  \Bigg(\sum\limits_{\substack{(c,\,d)\in{\A}^2\\\deg(d)\ge\deg(c)}}
  \dfrac{1}{d^{q-1}}\left(\dfrac{c}{d}\right)^{((sq-1)q-1)(q-1)}\Bigg)\\
&+\sum\limits_{t=1}^\infty\Bigg(\sum\limits_{\substack{(c,\,d)\in{\A}^2\\\deg(c)\ge\deg(d)+1}}
  \dfrac{1}{c^{(q-1)q}}\left(\dfrac{d}{c}\right)^{(tq-1)(q-1)q}\Bigg)\cdot
  \Bigg(\sum\limits_{\substack{(c,\,d)\in{\A}^2\\\deg(d)\ge\deg(c)}}
  \dfrac{1}{d^{q-1}}\left(\dfrac{c}{d}\right)^{(tq^2-1)(q-1)}\Bigg),\\
\Xi_{21}=&\sum\limits_{s=0}^\infty\Bigg(\sum\limits_{\substack{(c,\,d)\in{\A}^2\\\deg(c)\ge\deg(d)+1}}
  \dfrac{1}{c^{(q-1)q}}\left(\dfrac{d}{c}\right)^{s(q-1)q^2}\Bigg)\cdot
  \Bigg(\sum\limits_{\substack{(c,\,d)\in{\A}^2\\\deg(d)\ge\deg(c)}}
  \dfrac{1}{d^{q-1}}\left(\dfrac{c}{d}\right)^{s(q-1)q^2}\Bigg)\\
&-\sum\limits_{t=1}^\infty\Bigg(\sum\limits_{\substack{(c,\,d)\in{\A}^2\\\deg(c)\ge\deg(d)+1}}
  \dfrac{1}{c^{(q-1)q}}\left(\dfrac{d}{c}\right)^{(tq-1)(q-1)q}\Bigg)\cdot
  \Bigg(\sum\limits_{\substack{(c,\,d)\in{\A}^2\\\deg(d)\ge\deg(c)}}
  \dfrac{1}{d^{q-1}}\left(\dfrac{c}{d}\right)^{(tq-1)(q-1)q}\Bigg).
\end{align*}
Hence we have the expansion on $D=\{z\in\Omega:\,\,|\pi|<|z|<1\}$ from (\ref{e:mdd8}) and (\ref{e:mdd10})
\begin{align}
&\Delta(z)=\dfrac{\Upsilon_0}{z^{q-1}}+\dfrac{\Xi_1}{z^{(q-1)q}}+
\sum\limits_{m\ge 0}\ast\cdot z^m + \sum\limits_{m\le -(q^2-2)}\ast\cdot z^m,\label{e:mdd11} \\
&\Upsilon_0=(T^q-T)^q\left(\Upsilon_{10}+\Upsilon_{20}\right),\label{e:Upsilon} \\
&\Xi_1=(T^q-T)^q\left(\Xi_{11}+\Xi_{21}\right).\label{e:Xi}
\end{align}
Let $\mu_{\Delta}$ be the measure associated to the Drinfeld discriminant $\Delta(z)$, which is
by definition, the measure associated to the harmonic cocycle ${\mathfrak c}_{\Delta}$ given in equation (\ref{e:fbtt4}).
Then for $0\le i\le q^2-3$,
\begin{align*}
\int_{U(e_0)} x^id\mu_{\Delta}(x)=\,&{\mathfrak c}_{\Delta}(e)(X^iY^{q^2-3-i})={\rm Res}_{e_0}z^i\Delta(z)dz\\
=\,&{\rm Res}_{e_0}\left(\Upsilon_0\dfrac{z^idz}{z^{q-1}}+\Xi_1\dfrac{z^idz}{z^{(q-1)q}}\right)
\end{align*}
where $U(e_0)=U(\Lambda_0\Lambda_1)=\{x\in {\bk}_\infty: \,\, |x|\ge |\pi|^{-1}\}\cup\{\infty\}
=B_{\infty}(|\pi|^{1})$ is given in Example \ref{example:fbtt1}. From the above discussion, we get
\begin{lemma}\label{lem:mdd2}
For $0\le i\le q^2-3$, we have the integral values of $x^i$ over $U(e_0)$ against $\mu_{\Delta}$:
\begin{align*}
&{\rm (1) \,}\int_{U(e_0)}x^id\mu_{\Delta}(x)=0 \quad\text{ if $i\ne q-2, \,q^2-q-1$};
\qquad\qquad\qquad\qquad\qquad\qquad\qquad\\
&{\rm (2) \,}\int_{U(e_0)}x^{q-2}d\mu_{\Delta}(x)=\Upsilon_0;\\
&{\rm (3) \,}\int_{U(e_0)}x^{q^2-q-1}d\mu_{\Delta}(x)=\Xi_1.
\end{align*}
\end{lemma}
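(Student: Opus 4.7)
The plan is to reduce each of (1), (2), (3) to reading off a single Laurent coefficient of $\Delta(z)$ on the standard fundamental region $D = \lambda^{-1}(e_0) = \{z\in\Omega:\,\,|\pi|<|z|<1\}$, and then to invoke the computation of that Laurent series which was just carried out in equations (\ref{e:mdd4})--(\ref{e:mdd11}).

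First, I would unravel the definitions. By the definition of $\mu_\Delta$ as the measure attached to the harmonic cocycle $\mathfrak c_\Delta = \mathrm{Res}(\Delta)$ (equation (\ref{e:fbtt4}) applied to the cocycle produced by (\ref{e:residue})), we have
\[
\int_{U(e_0)} x^i\, d\mu_\Delta(x) \;=\; \mathfrak c_\Delta(e_0)(X^iY^{q^2-3-i}) \;=\; \mathrm{Res}_{e_0}\,z^i\Delta(z)\,dz.
\]
Next, since $e_0 = \Lambda_0\Lambda_1$ is the \emph{standard} edge, the parametrization (\ref{e:parametrization}) can be taken with $\gamma=\mathrm{id}$, so the local coordinate $v$ on $D(e_0)$ coincides with $z$, and the residue is simply the coefficient of $z^{-1}$ in $z^i\Delta(z)$, equivalently the coefficient of $z^{-i-1}$ in the Laurent expansion of $\Delta(z)$ on $D$.

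With this translation in hand, the lemma becomes a direct appeal to (\ref{e:mdd11}): the expansion on $D$ reads
\[
\Delta(z) \;=\; \frac{\Upsilon_0}{z^{q-1}} \;+\; \frac{\Xi_1}{z^{(q-1)q}} \;+\; \sum_{m\ge 0}\ast\,z^m \;+\; \sum_{m\le -(q^2-2)}\ast\,z^m,
\]
with $\Upsilon_0$ and $\Xi_1$ as in (\ref{e:Upsilon})--(\ref{e:Xi}). For $0\le i\le q^2-3$ the relevant exponent $-i-1$ lies in the window $[-(q^2-2),-1]$, and within this window the only nonvanishing coefficients are those at $z^{-(q-1)}$ and $z^{-(q-1)q}$. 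So the integral is $0$ unless $-i-1\in\{-(q-1),-(q-1)q\}$, i.e.\ unless $i=q-2$ (yielding $\Upsilon_0$) or $i=q^2-q-1$ (yielding $\Xi_1$). This gives (1), (2), (3) simultaneously. One should also note that the contribution of $(T^{q^2}-T)E_{q^2-1}(z)$ to $\Delta(z)$ contributes nothing, because each summand $1/(cz+d)^{q^2-1}$ is annihilated when multiplied by $z^i$ with $0\le i\le q^2-3$ and integrated as a residue.

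The main obstacle is essentially bookkeeping rather than ideas: one must be confident that the case analysis preceding (\ref{e:mdd11}) (the four cases each for $S_1^q S_2$ and $S_2^q S_1$) exhausts all powers of $1/z$ in the range $[1, q^2-2]$, and that $S_1^{q+1}$ and $S_2^{q+1}$ contribute nothing in that window (the former because it is holomorphic on $D$, the latter because its lowest power is $z^{-(q-1)(q+1)}=z^{-(q^2-1)}$). Once this is checked, the lemma is immediate from matching coefficients.
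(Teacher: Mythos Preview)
Your proposal is correct and follows essentially the same approach as the paper: both reduce the integral to $\mathrm{Res}_{e_0}\,z^i\Delta(z)\,dz$ via (\ref{e:fbtt4}) and (\ref{e:residue}), and then read off the answer from the Laurent expansion (\ref{e:mdd11}) on $D$. Your added remarks on why $E_{q^2-1}$, $S_1^{q+1}$, and $S_2^{q+1}$ contribute nothing in the window $1\le -m\le q^2-2$ are exactly the bookkeeping the paper carries out in the paragraphs leading up to (\ref{e:mdd11}).
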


\begin{lemma}\label{lem:mdd3}
$\Xi_1=0$.
\end{lemma}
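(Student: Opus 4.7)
The plan is to evaluate $\Xi_1=\int_{U(e_0)}x^{q^2-q-1}d\mu_\Delta$ by exploiting the harmonicity of the cocycle $\mathfrak c_\Delta$ at the vertex $\Lambda_0$, rather than pushing the Laurent-series calculation any further. The idea is that harmonicity together with $\Gamma$-equivariance reduces the claim to an elementary character-sum vanishing in $\F_q$.

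First, I enumerate the $q+1$ edges incident to $\Lambda_0$. One of them is $e_0=\Lambda_0\Lambda_1$; the remaining $q$ arise as $\gamma_c*e_0$ with $\gamma_c=\left(\begin{smallmatrix}0&1\\1&c\end{smallmatrix}\right)\in\Gamma$ for $c\in\F_q$, acting via the ``$*$'' action of (\ref{e:staraction}). A direct computation with $(\gamma_c^T)^{-1}$ shows that $\gamma_c*\Lambda_0=\Lambda_0$ and that $\gamma_c*\Lambda_1$ is the neighbour of $\Lambda_0$ corresponding to the line $\langle -c\bv_1+\bv_2\rangle$ in $\Lambda_0/\pi\Lambda_0\cong\F_q^2$; as $c$ ranges over $\F_q$ these exhaust all lines distinct from $\langle\bv_1\rangle$ (which corresponds to $\Lambda_1$ itself).

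Next, applying Corollary \ref{cor:fbtt3} to each $\gamma_c$ (with $\det\gamma_c=-1$ and $c'x+d'=x+c$) and using the antisymmetry $\mathfrak c_\Delta(\bar e)=-\mathfrak c_\Delta(e)$, the harmonicity condition for $\mathfrak c_\Delta$ at $\Lambda_0$ applied to the polynomial $F=X^{q^2-q-1}Y^{q-2}$ becomes, after expanding $(x+c)^{q-2}$ binomially and substituting $\int_{\A_\infty}x^k d\mu_\Delta=-\int_{U(e_0)}x^k d\mu_\Delta$,
\begin{equation*}
\int_{U(e_0)}x^{q^2-q-1}\,d\mu_\Delta=\sum_{k=0}^{q-2}\binom{q-2}{k}\Bigl(\sum_{c\in\F_q}c^{q-2-k}\Bigr)\int_{U(e_0)}x^k\,d\mu_\Delta.
\end{equation*}
For every $0\le k\le q-2$ the inner character sum vanishes in characteristic $p$: the exponent $q-2-k$ is either $0$, in which case $\sum_{c\in\F_q}1=q\equiv 0$, or a positive integer strictly smaller than $q-1$, for which $\sum_{c\in\F_q}c^m=0$ since $(q-1)\nmid m$. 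Hence the right-hand side is identically zero, and by Lemma \ref{lem:mdd2}(3) this is exactly $\Xi_1=0$.

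The main obstacle is the preliminary book-keeping in the first paragraph: one must verify carefully that the specific coset representatives $\gamma_c$ together with the identity really produce the full set of $q+1$ edges at $\Lambda_0$ under the ``$*$''-action, which amounts to tracking the line in $\Lambda_0/\pi\Lambda_0$ represented by each $\gamma_c*\Lambda_1$. Once that is in place, the rest of the argument is a one-line calculation in character sums over $\F_q$.
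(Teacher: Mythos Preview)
Your argument is correct and is the same idea as the paper's: both proofs exploit harmonicity of $\mathfrak c_\Delta$ at the vertex $\Lambda_0$ together with $\Gamma$-equivariance of the cocycle, reducing the question to an elementary identity over $\F_q$.

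The execution differs slightly. The paper parametrizes the $q$ edges at $\Lambda_0$ other than $e_0$ by the two families $\gamma_b=\left(\begin{smallmatrix}1&b\\0&1\end{smallmatrix}\right)$ ($b\in\F_q^*$) and $\delta=\left(\begin{smallmatrix}0&1\\1&0\end{smallmatrix}\right)$, derives the relation
\[
\int_{U(e_0)}x^{q^2-3-j}\,d\mu_\Delta=\sum_{b\in\F_q^*}\int_{U(e_0)}(x+b)^j\,d\mu_\Delta+\int_{U(e_0)}x^j\,d\mu_\Delta,
\]
sets $j=q-2$, and then invokes Lemma~\ref{lem:mdd2}(1) to kill all terms with $k<q-2$ in the binomial expansion; the surviving $k=q-2$ term contributes $(q-1)\Upsilon_0$, which cancels the extra $\Upsilon_0$. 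Your single family $\gamma_c=\left(\begin{smallmatrix}0&1\\1&c\end{smallmatrix}\right)$ ($c\in\F_q$) absorbs $\delta=\gamma_0$ into the sum, and the resulting inner sum $\sum_{c\in\F_q}c^{q-2-k}$ vanishes for every $0\le k\le q-2$ by itself, so you never need Lemma~\ref{lem:mdd2}(1). This is a mild but genuine streamlining: your version shows that $\Xi_1=0$ is forced purely by the $\Gamma$-equivariance and harmonicity, independent of the Laurent-series input.
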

\begin{proof} We consider the edges around the vertex ${\Lambda}_0$ of the Bruhat-Tits tree $\mathcal T$:
\begin{equation}\label{figure}
\xymatrix@=48pt{
&                            &{\circ}\ar@{}[d]\ar@{}^{[\pi{\bv}_1,\, b{\bv}_1+{\bv}_2]}                     &\\
&{\circ}\ar@{->}[r]^{e_{-1}}\ar@{ }_{\Lambda_{-1}}\ar@{<.}[l] &{\circ}\ar@{<-}[u]^{e_{b,0}}\ar@{->}[r]^{e_0}
\ar@{}_{\Lambda_0} &{\circ}\ar@{.>}[r]
\ar@{}_{\Lambda_1} & \\
&                            &\ar@{.>}[u]  &
}
\end{equation}
where the notations are as in Section \ref{btt}, and $b\in {\Fq}^*$. Under the correspondence (\ref{e:2.4}), the vertex
$[\pi{\bv}_1, \, b{\bv}_1+{\bv}_2]$ corresponds to the matrix
$\left(\begin{array}{cc}\pi & b\\0 & 1\end{array}\right)$ and $\Lambda_{-1}$
corresponds to the matrix $\left(\begin{array}{cc}\pi&0\\0&1\end{array}\right)$.
Let $\gamma_b=\left(\begin{array}{cc}1&b\\0&1\end{array}\right)$ for $b\in{\Fq}^*$,
and $\delta=\left(\begin{array}{cc}0&1\\1&0\end{array}\right)$. Then
$(\gamma_b^{-1})^T=\left(\begin{array}{cc}1&0\\-b&1\end{array}\right)$. The matrices
$\gamma_b$ and $\delta$ fix the vertex $\Lambda_0$ under the ``$\ast$" action.
And by performing column operations, we see that
\begin{alignat*}{2}
\delta\ast\Lambda_1=\left(\begin{array}{cc}0&1\\1&0\end{array}\right)
  \left(\begin{array}{cc}\pi^{-1}&0\\0&1\end{array}\right)=
  \left(\begin{array}{cc}0&1\\\pi^{-1}&1\end{array}\right)
  &&&\sim\left(\begin{array}{cc}\pi&0\\0&1\end{array}\right)
  =\Lambda_{-1},\\
\gamma_b\ast\Lambda_1=\left(\begin{array}{cc}1&0\\-b&1\end{array}\right)
  \left(\begin{array}{cc}\pi^{-1}&0\\0&1\end{array}\right)=
  \left(\begin{array}{cc}\pi^{-1}&0\\-b\pi^{-1}&1\end{array}\right)
  &&&\sim\left(\begin{array}{cc}\pi&-b^{-1}\\0&1\end{array}\right)\\
  &&&\quad=[\pi{\bv}_1,\,-b^{-1}{\bv}_1+{\bv}_2],
\end{alignat*}
where the notion $\sim$ denotes that two matrices are in the same class in the
quotient space ${\GL2}(\bk_\infty)/({\bk^*_\infty}\cdot {\GL2}(\A_\infty))$.
Hence $\overline{e_{b,0}}=\gamma_{-b^{-1}}\ast e_0$ and
$\overline{e_{-1}}=\delta \ast e_0$,
where $\overline{e}$ denotes the edge with opposite orientation of $e$.

By the definitions in Section \ref{fbtt}, we have for an edge $e=\gamma\ast {e_0}$ with
$\gamma=\left(\begin{array}{cc}a&b\\c&d\end{array}\right)\in {\GL2}({\bk}_{\infty})^+$ and
$0\le i\le q^2-3$,
\begin{align*}
\int_{U(e)}x^id\mu_{\Delta}(x)&={\mathfrak c}_{\Delta}(e)(X^iY^{q^2-3-i})\\
&={\rm Res}_e z^i\Delta(z)dz\\
&={\rm Res}_{e_0} (\gamma z)^i\Delta(\gamma z) d\gamma z\\
&=\int_{U(e_0)}{\det(\gamma)}(ax+b)^i(cx+d)^{q^2-3-i}d\mu_{\Delta}(x).
\end{align*}
Therefore for $0\le j\le q^2-3$,
\begin{align*}
\int_{U(\gamma_b\ast e_0)} x^j d\mu_{\Delta}(x)&=\int_{U(e_0)} (x+b)^jd\mu_{\Delta}(x),\\
\int_{U(e_{-1})} x^jd\mu_{\Delta}(x)&=-\int_{U(\delta\ast e_0)}x^jd\mu_{\Delta}(x)=\int_{U(e_0)}x^{q^2-3-j}d\mu_{\Delta}(x).
\end{align*}
Hence from
\begin{align*}
&\sum\limits_{b\in{\Fq}^{*}}\int_{U(\gamma_b\ast e_0)}x^jd\mu_{\Delta}(x)+\int_{U(e_0)}x^jd\mu_{\Delta}(x)
-\int_{U(e_{-1})}x^jd\mu_{\Delta}(x)\\
=&\sum\limits_{b\in{\Fq}^{*}}\int_{U(\gamma_{-b^{-1}}\ast e_0)}x^jd\mu_{\Delta}(x)+\int_{U(e_0)}x^jd\mu_{\Delta}(x)
-\int_{U(e_{-1})}x^jd\mu_{\Delta}(x)\\
=&-\left(\sum\limits_{b\in{\Fq}^{*}}\int_{U(e_{b,0})}x^jd\mu_{\Delta}(x)+\int_{U(\overline{e_0})}x^jd\mu_{\Delta}(x)
+\int_{U(e_{-1})}x^jd\mu_{\Delta}(x)\right)\\
=&\,0,
\end{align*}
we get
\begin{equation*}
\int_{U(e_0)}x^{q^2-3-j}d\mu_{\Delta}(x)
=\sum\limits_{b\in{\Fq}^*}\int_{U(e_0)}(x+b)^jd\mu_{\Delta}(x)+\int_{U(e_0)}x^jd\mu_{\Delta}(x).
\end{equation*}
We plug in $j=q-2$ in the above equation and apply (1) of Lemma \ref{lem:mdd2}, then
get
\begin{equation*}
\int_{U(e_0)}x^{q^2-q-1}d\mu_{\Delta}(x)=0,
\end{equation*}
therefore $\Xi_1=0$.
\end{proof}

Recall that the $\Lambda_n$ ($n\ge 0$) are the vertices of the half line (\ref{e:2.5}), and $e_n=\Lambda_n\Lambda_{n+1}$
is the oriented edge with origin $\Lambda_n$ and terminal $\Lambda_{n+1}$.
\begin{theorem}\label{thm:mdd2}
The measure $\mu_{\Delta}$ on ${\mathbb P}^1({\bk}_\infty)$ associated
to the Drinfeld discriminant $\Delta(z)$ is
$h$-admissible with $h$ being the smallest integer greater than or
equal to $(q^2-3)/2$, and is completely determined by the following
values:
\begin{align*}
&{\rm (1)\,} \int_{U(e_0)} x^jd\mu_{\Delta}(x)=\begin{cases}0,\, &\text{ if \, $0\le j\le q^2-3$, $j\ne q-2$},\\
\Upsilon_0,\, &\text{ if  $j=q-2$}.\end{cases}\qquad\qquad\qquad\\
&{\rm (2)\,} \int_{U(e_n)}x^jd\mu_{\Delta}(x)=0,\, \text{ for
\,$0\le j\le q^2-3$ and any $n\ge 1$.}
\end{align*}
\end{theorem}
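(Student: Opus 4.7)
The $h$-admissibility with $h = \lceil(q^2-3)/2\rceil$ is immediate from the general setup of Section~\ref{fbtt}: the measure $\mu_\Delta$ is attached to the harmonic cocycle $\mathfrak{c}_\Delta = {\rm Res}(\Delta) \in C_{\rm har}(\Gamma, q^2-1)$, and Proposition~\ref{prop:fbtt1.1} with $n=q^2-1$, together with Corollary~\ref{cor:fbtt1}, gives exactly this admissibility. Statement~(1) is essentially a summary of what has already been done: Lemma~\ref{lem:mdd2} pins down the only two a priori nonzero values, namely $\int_{U(e_0)} x^{q-2}\,d\mu_\Delta = \Upsilon_0$ and $\int_{U(e_0)} x^{q^2-q-1}\,d\mu_\Delta = \Xi_1$ in the range $0\le j\le q^2-3$, while Lemma~\ref{lem:mdd3} shows $\Xi_1=0$.

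For statement~(2) my plan is to induct on $n$ via the recursion
\[
\int_{U(e_n)} x^j\,d\mu_\Delta
= -\sum_{\substack{k\ge 1,\ k\le q^2-3-j\\ (q-1)\mid k}}
\binom{q^2-3-j}{k}\,T^{nk}\,\int_{U(e_{n-1})} x^{j+k}\,d\mu_\Delta,
\]
obtained by combining the harmonicity of $\mathfrak{c}_\Delta$ at $\Lambda_n$ with the equivariance formula~(\ref{e:fbtt5}). Writing $f_1 := \overline{e_{n-1}}$ and $f_2,\dots,f_q$ for the remaining edges with origin $\Lambda_n$, the harmonic identity at $\Lambda_n$ reads $\int_{U(e_n)} x^j\,d\mu_\Delta = -\sum_{i=1}^q\int_{U(f_i)} x^j\,d\mu_\Delta$. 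To apply (\ref{e:fbtt5}) I would use coset representatives that $\ast$-fix $\Lambda_n$ and move $f_1$ through all the $f_i$'s: the $\ast$-stabilizer of $\Lambda_n$ turns out to be the lower-triangular analog $\Gamma_n^{\rm low} := \{\left(\begin{array}{cc}a&0\\b&d\end{array}\right)\in\Gamma : a,d\in\Fq^*,\ \deg_T(b)\le n\}$ of $\Gamma_n$, because $(\gamma^{-1})^T$ is then upper triangular of exactly the form defining $\Gamma_n$. An orbit-stabilizer count mirroring Proposition~\ref{prop:2.1}(3) gives that $\Gamma_n^{\rm low}$ acts $\ast$-transitively on $\{f_1,\dots,f_q\}$, and a convenient set of representatives for $\Gamma_n^{\rm low}/\Gamma_{n-1}^{\rm low}$ is $\gamma_\alpha := \left(\begin{array}{cc}1&0\\ \alpha T^n&1\end{array}\right)$ for $\alpha\in\Fq$.

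Substituting these $\gamma_\alpha$ into (\ref{e:fbtt5}) and summing over $\alpha\in\Fq$, the characteristic-$p$ identity $\sum_{\alpha\in\Fq}\alpha^k = -1$ when $(q-1)\mid k$ with $k\ge 1$ and $0$ otherwise (recalling that $q=0$ in $\Fq$) collapses the binomial expansion of $\sum_\alpha(\alpha T^n x+1)^{q^2-3-j}$, and using $\int_{U(f_1)} = -\int_{U(e_{n-1})}$ produces the displayed recursion. The point that requires care is the insistence on lower-triangular representatives rather than the more natural upper-triangular ones in $\Gamma_n$, since the parametrization $z=\gamma v$ underlying (\ref{e:fbtt5}) respects the $\ast$-action on edges and not the natural one; the wrong choice produces a spurious nonzero term at $j=q^2-q-1$. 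Once the recursion is in hand the induction is short: for $n=1$ the only nonzero $\int_{U(e_0)} x^{j+k}\,d\mu_\Delta$ in range sits at $j+k=q-2$, giving $k=q-2-j$, so $k\ge 1$ forces $j\le q-3$ while $(q-1)\mid k$ forces $j\equiv q-2\pmod{q-1}$, and these are jointly infeasible in $[0,q-3]$; for $n\ge 2$, every $\int_{U(e_{n-1})} x^{j+k}\,d\mu_\Delta$ appearing in the sum has $0\le j+k\le q^2-3$ and so vanishes by the inductive hypothesis.
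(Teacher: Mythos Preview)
Your proof is correct and follows essentially the same route as the paper: harmonicity at $\Lambda_n$ combined with the $\Gamma$-equivariance formula~(\ref{e:fbtt5}) applied via lower-triangular elements acting by ``$\ast$'', then a binomial expansion collapsed by the character sum over $\Fq$, followed by induction on $n$. Your explicit recursion is a tidy repackaging of the paper's computation (\ref{e:mdd12}); the only cosmetic difference is that the paper transports $e_0$ to the edges $e_{b,1}$ terminating at $\Lambda_1$ using $(\gamma_b^{-1})^T\ast$, whereas you transport $\overline{e_{n-1}}$ to the $f_i$ via your $\gamma_\alpha\ast$, which amounts to the same set of group elements up to relabeling and an orientation flip.
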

\begin{proof}
That the measure $\mu_{\Delta}$ is $h$-admissible follows from Corollary
\ref{cor:fbtt1} directly.

Since the fundamental domain of $\Gamma\backslash{\mathcal T}$ is the half line (\ref{e:2.1})
consisting of $\Lambda_n$ ($n\ge 0$) (for both the ordinary action
and the action ``$\ast$") and the measure $\mu_{\Delta}$ satisfies the
equation (\ref{e:fbtt5}), $\mu_{\Delta}$ is determined by the values $\int_{U(e_n)}x^jd\mu_{\Delta}(x)$
for $0\le j\le q^2-3$ and all integers $n\ge 0$.

From Proposition \ref{prop:2.1}, we know that under the ordinary
action, the group
\begin{equation*}
\Gamma_n=\left\{\left(\begin{array}{cc}a&b\\0&d\end{array}\right):\,\,
a,d\in {\Fq}^*, \,b\in {\Fq}[T], \,\deg_T(b)\le n\right\}, \quad
n\ge 1
\end{equation*}
fixes the edge $\Lambda_n\Lambda_{n+1}$, and acts transitively on
the set of edges with origin $\Lambda_n$ but distinct from the edge
$\Lambda_n\Lambda_{n+1}$. In fact, the element $\gamma_b=\left(\begin{array}{cc}1&b\\0&1\end{array}\right)$
for $b=\beta T^n$ with $\beta\in{\Fq}$ fixes $\Lambda_n$ and $\Lambda_{n+1}$ and takes
$\Lambda_{n-1}$ to the vertex $[\pi^{-(n-1)}{\bv}_1,\,\beta\pi^{-n}{\bv}_1+{\bv}_2]$, since $\Lambda_{n-1}$
is represented by the lattice $L_{n-1}={\A}_\infty\oplus \pi^{n-1}{\A}_\infty\sim \pi^{-(n-1)}{\A}_\infty
\oplus{\A}_\infty$, which is represented by the matrix $\left(\begin{array}{cc}1&0\\0&\pi^{n-1}\end{array}\right)$,
and
\begin{equation*}
\left(\begin{array}{cc}1&\beta T^{n}\\0&1\end{array}\right)
\left(\begin{array}{cc}1&0\\0&\pi^{n-1}\end{array}\right)=
\left(\begin{array}{cc}1&\beta T\\0&\pi^{n-1}\end{array}\right)\sim
\left(\begin{array}{cc}\pi^{-(n-1)}&\beta\pi^{-n}\\0&1\end{array}\right)
\end{equation*}
where the notion `$\sim$' means that the two matrices are the same in
${\GL2}(\bk_\infty)/({\bk^*_\infty}\cdot {\GL2}(\A_\infty))$ as before, thus they
represent the same vertex of the Bruhat-Tits tree $\mathcal T$ in terms of
the correspondence (\ref{e:2.2}).

Conclusion (1) of this theorem comes from Lemma \ref{lem:mdd2} and Lemma \ref{lem:mdd3} directly.

For conclusion (2), we consider the case $n=1$ at first. The $q+1$ edges with terminal vertex $\Lambda_1$
are $e_{b,1}$ (for $b=\beta T$ and $\beta\in{\Fq}$) and $\overline{e_1}$, as illustrated in the following
diagram:
\begin{equation*}
\xymatrix@=48pt{
&                                           &{\circ}\ar@{->}[d]^{e_{b,1}}\ar@{}^{[{\bv}_1,\, b{\bv}_1+{\bv}_2]} & & &\\
&{\circ}\ar@{->}[r]^{e_0}\ar@{}_{\Lambda_0} &{\circ}\ar@{->}[r]^{e_1}\ar@{}_{\Lambda_1}
    &{\circ}\ar@{->}[r]^{e_2}\ar@{}_{\Lambda_2} &{\circ}\ar@{.>}[r]\ar@{}_{\Lambda_3}&\\
&                                           &\ar@{.>}[u] & & &
}
\end{equation*}
where the origin vertex of $e_{b,1}$ is
\begin{equation*}
(\gamma_b^{-1})^T\ast \left(\begin{array}{cc}1&0\\0&1\end{array}\right)
=\left(\begin{array}{cc}1&\beta T\\0&1\end{array}\right)\cdot
\left(\begin{array}{cc}1&0\\0&1\end{array}\right)
\sim\left(\begin{array}{cc}
1&\beta\pi^{-1}\\0&1\end{array}\right)\sim\left(\begin{array}{cc}1&b\\0&1\end{array}
\right).
\end{equation*}
Therefore
\begin{align}
&\int_{U(e_1)}x^jd\mu_{\Delta}(x)\notag\\
=&\sum\limits_{b\in T\cdot{\Fq}^*}\int_{U(e_{b,1})}x^jd\mu_{\Delta}(x)+\int_{U(e_0)}x^jd\mu_{\Delta}(x)\notag\\
=&\sum\limits_{b\in T\cdot{\Fq}^*}\int_{U((\gamma_b^{-1})^T\ast e_0)}x^jd\mu_{\Delta}(x)
   + \int_{U(e_0)}x^jd\mu_{\Delta}(x)\notag\\
=&\sum\limits_{b\in T\cdot{\Fq}^*}\int_{U(e_0)}x^j(-bx+1)^{q^2-3-j}d\mu_{\Delta}(x)
   + \int_{U(e_0)}x^jd\mu_{\Delta}(x)\notag\\
=&\sum\limits_{i=0}^{q^2-3-j}\binom{q^2-3-j}{i}\sum\limits_{b\in T\cdot{\Fq}^*}
  (-b)^i\int_{U(e_0)}x^{i+j}d\mu_{\Delta}(x)+ \int_{U(e_0)}x^jd\mu_{\Delta}(x)
  \label{e:mdd12}.
\end{align}
In the expression (\ref{e:mdd12}), the integral of the first term is equal to $0$
unless $i+j=q-2$, thus $\int_{U(e_1)}x^jd\mu_{\Delta}(x)=0$ if $j>q-2$ by Lemma
\ref{lem:mdd2} and \ref{lem:mdd3}.
For $j\le q-2$, we get from (\ref{e:mdd12}) that
\begin{equation*}
\int_{U(e_1)}x^jd\mu_{\Delta}(x)
=\binom{q^2-3-j}{q-2-j}\sum\limits_{b\in T\cdot{\Fq}^*}(-b)^{q-2-j}\int_{U(e_0)}
x^{q-2}d\mu_{\Delta}(x)+\int_{U(e_0)}x^jd\mu_{\Delta}(x).
\end{equation*}
The summation $\sum_{b\in T\cdot{\Fq}^*}(-b)^{q-2-j}=0$ unless $j=q-2$, so it is
clear that the integral $\int_{U(e_1)}x^jd\mu_{\Delta}(x)$ is equal to $0$ for $j<q-2$,
because the integral $\int_{U(e_0)}x^jd\mu_{\Delta}(x)$ is also equal to $0$ in
this case. And for $j=q-2$, we have
\begin{align*}
\int_{U(e_1)}x^{q-2}d\mu_{\Delta}(x)&=\binom{q^2-q-1}{0} \sum\limits_{b\in T\cdot{\Fq}^*} 1
\cdot\int_{U(e_0)}
x^{q-2}d\mu_{\Delta}(x)+\int_{U(e_0)}x^{q-2}d\mu_{\Delta}(x)\\
&=0.
\end{align*}
Therefore we have showed that
\begin{equation*}
\int_{U(e_1)}x^{j}d\mu_{\Delta}(x)=0
\end{equation*}
for $0\le j\le q^2-3$.

For $n>1$, we can show in a similar way that $\int_{U(e_n)}x^{j}d\mu_{\Delta}(x)=0$ for
$0\le j\le q^2-3$ by induction on $n$.
\end{proof}

\begin{cor}\label{cor:mdd1}
$\Upsilon_0\ne 0$.
\end{cor}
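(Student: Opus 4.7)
The plan is to argue by contradiction, leveraging Theorem \ref{thm:mdd2} together with Teitelbaum's correspondence in Theorem \ref{thm:fbtt3}. Suppose, for contradiction, that $\Upsilon_0 = 0$. Then conclusion (1) of Theorem \ref{thm:mdd2} shows that $\int_{U(e_0)} x^j d\mu_\Delta(x) = 0$ for every $j$ in the full range $0 \le j \le q^2-3$, and conclusion (2) already gives $\int_{U(e_n)} x^j d\mu_\Delta(x) = 0$ for all such $j$ and every $n \ge 1$. Equivalently, ${\mathfrak c}_\Delta(e_n)(X^i Y^{q^2-3-i}) = 0$ for every $n \ge 0$ and $0 \le i \le q^2-3$.

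Next I would invoke $\Gamma$-equivariance. The half line $\Lambda_0 \Lambda_1 \Lambda_2 \cdots$ in (\ref{e:2.1}) is a fundamental domain for $\Gamma \backslash \mathcal T$, so every edge of $\mathcal T$ is of the form $\gamma \ast e_n$ for some $\gamma \in \Gamma$ and some $n \ge 0$ (up to orientation). The transformation rule (\ref{e:fbtt5}) of Lemma \ref{lemma:fbtt1} then expresses $\int_{U(\gamma \ast e_n)} f(x) d\mu_\Delta(x)$ as a linear combination of the integrals $\int_{U(e_n)} x^j d\mu_\Delta(x)$ for $0 \le j \le q^2-3$, all of which now vanish. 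Combined with the relation (\ref{e:fbtt8.1}) for edges of opposite orientation, this forces ${\mathfrak c}_\Delta \equiv 0$ on all of $E_{\mathcal T}$.

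Finally, by Teitelbaum's isomorphism (Theorem \ref{thm:fbtt3}), the map ${\rm Res}: S_{q^2-1}(\Gamma) \to C_{\rm har}(\Gamma, q^2-1)$ is an isomorphism and ${\mathfrak c}_\Delta = {\rm Res}(\Delta)$. Hence ${\mathfrak c}_\Delta = 0$ would force $\Delta \equiv 0$, contradicting the well-known fact (recorded in Example \ref{example:3.4} and Theorem \ref{thm:3.1}) that the Drinfeld discriminant is a nonzero cusp form of weight $q^2 - 1$. Therefore $\Upsilon_0 \ne 0$.

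There is no serious obstacle here; the only thing to verify carefully is that $\Upsilon_0$ is indeed the \emph{only} remaining freedom in the list of determining integrals, which is exactly what Theorem \ref{thm:mdd2} established (note that $\Xi_1 = 0$ was already shown in Lemma \ref{lem:mdd3}). So the corollary follows essentially for free once one recognizes that the vanishing of $\Upsilon_0$ would collapse the entire measure to zero.
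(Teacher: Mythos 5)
Your proof is correct and follows essentially the same route as the paper: assume $\Upsilon_0=0$, conclude that all the determining integrals over the $U(e_n)$ vanish, propagate this to every edge by $\Gamma$-equivariance, and derive the contradiction $\Delta\equiv 0$ via Teitelbaum's isomorphism. The only (harmless) difference is that you cite conclusion (2) of Theorem \ref{thm:mdd2} directly for the edges $e_n$ with $n\ge 1$, whereas the paper phrases this as rerunning the argument of that theorem's proof.
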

\begin{proof}
Suppose $\Upsilon_0=0$. Then $\int_{U(e_0)}x^jd\mu_{\Delta}(x)=0$ for all integers $j$
between $0$ and $q^2-3$, thus we can show in the same way as the proof of Theorem
\ref{thm:mdd2} that $\int_{U(e_n)}x^jd\mu_{\Delta}(x)=0$ for all integers $n\ge 0$
and $0\le j\le q^2-3$, which implies that $\int_{U(e)}x^jd\mu_{\Delta}(x)=0$
for all edges $e$ of the Bruhat-Tits tree $\mathcal T$. Therefore the harmonic cocycle ${\mathfrak c}_{\Delta}=0$, and then $\Delta(z)=0$ by Theorem \ref{thm:fbtt3}, which is absurd.
\end{proof}

\begin{cor}\label{cor:mdd2}
For any edge $e$ of the Bruhat-Tits tree $\mathcal T$ and $0\le j\le q^2-3$, we have
\begin{equation*}
\int_{U(e)}x^j d\mu_{\Delta}(x)=r(e,j)\cdot \Upsilon_0
\end{equation*}
where $r(e,j)\in{\A}$.
\end{cor}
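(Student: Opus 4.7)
The plan is to reduce the integral over $U(e)$ to the fundamental domain of $\Gamma\backslash\mathcal T$ via the transformation formula (\ref{e:fbtt5}) and then apply Theorem \ref{thm:mdd2}. By (\ref{e:2.1}), the fundamental domain is the half-line of positively oriented edges $e_n=\Lambda_n\Lambda_{n+1}$ for $n\ge 0$. Since $\Gamma\subset\GL2(\bk_\infty)^+$ fixes the $\infty$-end that defines the orientation of $\mathcal T$, the $\Gamma$-action preserves orientation. Consequently every $e\in E^+_{\mathcal T}$ is of the form $\gamma\ast e_n$ for some $\gamma\in\Gamma$ and $n\ge 0$, and every $e\in E^-_{\mathcal T}$ is $\bar{e'}$ for such an $e'$.

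For $e=\gamma\ast e_n$ with $\gamma=\left(\begin{array}{cc}a&b\\c&d\end{array}\right)\in\Gamma$, applying (\ref{e:fbtt5}) with $f(x)=x^j$, exactly as in the proofs of Lemma \ref{lem:mdd3} and Theorem \ref{thm:mdd2}, gives
\[
\int_{U(e)}x^j\,d\mu_{\Delta}(x)=\int_{U(e_n)}\det(\gamma)(ax+b)^j(cx+d)^{q^2-3-j}\,d\mu_{\Delta}(x).
\]
Because $a,b,c,d\in\A$ and $\det(\gamma)\in\Fq^{*}\subset\A$, binomial expansion of the integrand produces a polynomial $\sum_{k=0}^{q^2-3}\alpha_k\,x^k$ with every coefficient $\alpha_k\in\A$.

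Next I invoke Theorem \ref{thm:mdd2}: for $n\ge 1$ each integral $\int_{U(e_n)}x^k\,d\mu_{\Delta}$ with $0\le k\le q^2-3$ vanishes, so $r(e,j)=0\in\A$; for $n=0$ only the $k=q-2$ term survives, yielding $r(e,j)=\alpha_{q-2}\in\A$. For a negatively oriented edge $e=\bar{e'}$, finite additivity of $\mu_{\Delta}$ combined with (\ref{e:fbtt5.5}) gives $\int_{U(e)}x^j\,d\mu_{\Delta}(x)=-\int_{U(e')}x^j\,d\mu_{\Delta}(x)$, so $r(e,j)=-r(e',j)\in\A$ by the previous case. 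The only genuine obstacle is bookkeeping around the $\ast$ versus $\cdot$ actions on $\mathcal T$ and the orientation of the representative edges chosen in the fundamental domain; but these issues are already settled by the explicit coset manipulations in the proofs of Lemma \ref{lem:mdd3} and Theorem \ref{thm:mdd2}, so no new idea is required beyond carefully tracking the $\A$-coefficient through the binomial expansion.
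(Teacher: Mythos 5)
Your argument is the same as the paper's: write $e$ in terms of the fundamental domain, pull the integral back to $U(e_n)$ via (\ref{e:fbtt5}), observe that the resulting integrand $\det(\gamma)(ax+b)^j(cx+d)^{q^2-3-j}$ is a polynomial with coefficients in $\A$, and apply Theorem \ref{thm:mdd2}. The one flaw is your orientation claim: $\Gamma$ does \emph{not} fix the end $\infty$ (the element $\delta=\left(\begin{array}{cc}0&1\\1&0\end{array}\right)\in\Gamma$ interchanges $0$ and $\infty$), and the $\ast$-action does not preserve the orientation of $\mathcal T$. Indeed, in the proof of Lemma \ref{lem:mdd3} one has $\delta\ast e_0=\overline{e_{-1}}$, where $e_0$ and $e_{-1}=\Lambda_{-1}\Lambda_0$ are both positively oriented (both $U(e_0)$ and $U(e_{-1})$ contain $\infty$); so $\delta$ carries the positive edge $e_0$ to a negative edge, and the positive edge $e_{-1}=\delta\ast\overline{e_0}$ is not of the form $\gamma\ast e_n$. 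Hence your dichotomy ``every $e\in E^+_{\mathcal T}$ is some $\gamma\ast e_n$, every $e\in E^-_{\mathcal T}$ is the opposite of one'' is false as stated. It is harmless here only because the two cases you actually treat, $e=\gamma\ast e_n$ and $e=\overline{\gamma\ast e_n}$, do together exhaust $E_{\mathcal T}$: the fundamental domain (\ref{e:2.1}) shows every unoriented edge is $\Gamma$-equivalent to some $\{\Lambda_n,\Lambda_{n+1}\}$, so every oriented edge is $\gamma\ast e_n$ or its opposite, and both cases yield $r(e,j)\in\A$. With the orientation-based justification replaced by this one, your proof is correct and coincides with the paper's.
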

\begin{proof}
The edge $e$ can be obtained as $e=\gamma\ast e_n$ for some integer $n\ge 0$ and some
$\gamma=\left(\begin{array}{cc}a&b\\c&d\end{array}\right)\in {\Gamma}$. Hence
\begin{align}
\int_{U(e)} x^jd\mu_{\Delta}(x)&=\int_{U(\gamma\ast e_n)}x^jd\mu_{\Delta}(x)\notag\\
&=\int_{U(e_n)}\det(\gamma) (ax+b)^j (cx+d)^{q^2-3-j}d\mu_{\Delta}(x).\label{e:mdd13}
\end{align}
In the integral of (\ref{e:mdd13}), the integrand is a polynomial with coefficients
in $\A$, therefore the integral $\int_{U(e)}x^jd\mu_{\Delta}(x)$ is $0$ if $n\ne 0$.
And if $n=0$, the integral $\int_{U(e)}x^jd\mu_{\Delta}(x)$ is equal to $r(e,j)\cdot
\int_{U(e_0)}x^{q-2}d\mu_{\Delta}(x)$ $=r(e,j)\cdot\Upsilon_0$ for some $r(e,j)\in {\A}$.
\end{proof}


\section{Complements}\label{comp}
In Theorem \ref{thm:mdd2}, we have determined the integrals
$\int_{U(e_0)}x^jd\mu_{\Delta}(x)$ for $0\le j\le q^2-3$ and we can
see what the values $\int_{U(e)}x^jd\mu_{\Delta}(x)$ look like in
Corollary \ref{cor:mdd2} for a general edge $e$ of $\mathcal T$ and
$0\le j\le q^2-3$. In general, we set
\begin{equation}\label{e:comp1}
L(\Delta;e;j)=\int_{U(e)}x^{j-1}d\mu_{\Delta}(x) \quad\text{ for
$e\in E_{\mathcal T}$ and $j\in{\Z}$}
\end{equation}
as given in the equation (\ref{e:fbtt22}). Since $\delta=\left(\begin{array}{cc}
0&1\\1&0\end{array}\right)\in \Gamma$, we have the following equation for these values:
\begin{equation}\label{e:comp2}
L(\Delta;\delta\ast e;j)=-L(\Delta;e,q^2-1-j) \quad\text{ for $e\in E_{\mathcal T}$ and
$j\in{\Z}$}.
\end{equation}
When the edge $e$ is $\overline{e_{1,0}}$ in (\ref{figure}), the
corresponding $U(\overline{e_{1,0}})$ is the set ${\mathcal U}_1$ of
$1$-units of ${\bk}_\infty$, so $L(\Delta;\overline{e_{1,0}};j)$ can
be extended to $L(\Delta;\overline{e_{1,0}};s)$ for $s\in{\Z}_p$,
which is $L_{\Delta}(s)$ in terms of the notations in
(\ref{e:lfunction}), and (\ref{e:comp2}) becomes the functional
equation
\begin{equation}\label{e:compfeq}
L_{\Delta}(s)=-L_{\Delta}(q^2-1-s)
\end{equation}
as pointed out by D. Goss in \cite{Go1992}. About the measure associated to
the Drinfeld discriminant $\Delta$, it is also interesting to see what the values
$L_{\Delta}(j)$ might be for $j\ge 1$.

As a special case of integrals of functions in $C^h({\P1}(\bk_\infty))$ against the measure $\mu_{\Delta}$ (see Corollary \ref{cor:fbtt1} and Theorem \ref{thm:fbtt3}),
the values $L(\Delta;e;j)$ for $j\ge q^2-1$ or $j\le 0$ are obtained through a
limit process by those for $1\le j\le q^2-2$, but they can also be calculated by applying the equation (\ref{e:fbtt21}) in terms of the residues. At first, we will
consider $L(\Delta;e_0;j+1)$ $=\int_{U(e_0)}x^jd\mu_{\Delta}(x)$ by expanding the
Drinfeld discriminant $\Delta(z)$ over the region
$\lambda^{-1}(e_0)=\{z\in \Omega:\,\,|\pi|<|z|<1\}$ better than
the equation (\ref{e:mdd11}), where the terms with $z^{m}$ for $m\ge 0$ or
$m\le -(q^2-2)$ are omitted.

In the equation (\ref{e:mdd6}), we've already calculated $S_1^qS_2$ in (\ref{e:mdd8})
and $S_2^qS_1$ in (\ref{e:mdd10}). And we do similar calculations to get
\begin{align}
&S_1^{q+1}=\sum\limits_{l=0}^\infty \ast\cdot z^{l(q-1)q}
  +\sum\limits_{l=1}^\infty \ast\cdot z^{-(q-1)+l(q-1)q},\label{e:comp3}\\
&S_2^{q+1}=\sum\limits_{l=1}^\infty\dfrac{\ast}{z^{q-1+l(q-1)q}}
  +\sum\limits_{l=2}^\infty\dfrac{\ast}{z^{l(q-1)q}}.\label{e:comp4}
\end{align}
Therefore we get from (\ref{e:mdd8}), (\ref{e:mdd10}), (\ref{e:comp3}), and
(\ref{e:comp4}) that
\begin{equation*}
(T^q-T)^qE_{q-1}(z)^{q+1}=\dfrac{\Upsilon_0}{z^{q-1}}+\dfrac{\Xi_1}{z^{(q-1)q}}+
\sum\limits_{0\ne l\in{\Z}}\dfrac{\ast}{z^{q-1+l(q-1)q}}
+ \sum\limits_{1\ne l\in{\Z}}\dfrac{\ast}{z^{l(q-1)q}}.
\end{equation*}
We also expand $E_{q^2-1}(z)$ over the region $\lambda^{-1}(e_0)$ as:
\begin{equation*}
E_{q^2-1}(z)=\sum\limits_{l\in{\Z}}\dfrac{\ast}{z^{l(q-1)q^2}}
+\sum\limits_{l\in{\Z}}\dfrac{\ast}{z^{(q-1)+(q-1)q+l(q-1)q^2}}.
\end{equation*}
Therefore we get the expansion of $\Delta(z)$ over $\lambda^{-1}(e_0)$:
\begin{align}
\Delta(z)&=(T^{q^2}-T)E_{q^2-1}(z)+(T^q-T)^qE_{q-1}(z)^{q+1}\notag\\
&=\sum\limits_{i\in{\Z}}\dfrac{\Upsilon_i}{z^{q-1+i(q-1)q}}+
\sum\limits_{i\in{\Z}}\dfrac{\Xi_i}{z^{i(q-1)q}}\label{e:comp5}
\end{align}
where $\Upsilon_0$ and $\Xi_1$ are given in (\ref{e:Upsilon}) and (\ref{e:Xi}),
respectively.
\begin{prop}\label{prop:comp1} \begin{itemize}
\item[(1)] We have
\[
L(\Delta;e_0;j)=\begin{cases}
\Upsilon_i, &\quad\text{ if $j=q-1+i(q-1)q$ with $i\in{\Z}$},\\
\Xi_i, &\quad\text{ if $j=i(q-1)q$ with $i\in{\Z}$},\\
0, &\quad\text{ if $j\ne q-1+i(q-1)q, i(q-1)q$ with $i\in{\Z}$}.
\end{cases}
\]
\item[(2)] For an integer $m\ge 0$, we have
\begin{align}
L_{\Delta}(m+1)=&\sum\limits_{0\le q-2+i(q-1)q\le m}
(-1)^{m-(q-2+i(q-1)q)}\binom{m}{q-2+i(q-1)q}\Upsilon_i\notag\\
&\qquad+\sum\limits_{0\le -1+i(q-1)q\le m}
(-1)^{m-(-1+i(q-1)q)}\binom{m}{-1+i(q-1)q}\Xi_i\label{e:comp6}
\end{align}
where the two summations are taken over the integers $i$.
\end{itemize}
\end{prop}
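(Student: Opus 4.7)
The plan for part (1) is to identify the integral $L(\Delta;e_0;j)=\int_{U(e_0)}x^{j-1}d\mu_{\Delta}(x)$ with the residue ${\rm Res}_{e_0}\,z^{j-1}\Delta(z)\,dz$, which by the construction recalled in Section~\ref{fbtt} is exactly the coefficient of $z^{-1}dz$ in the Laurent expansion of $z^{j-1}\Delta(z)\,dz$ on the annulus $\lambda^{-1}(e_0)=\{z\in\Omega:|\pi|<|z|<1\}$; equivalently, it is the coefficient of $z^{-j}$ in $\Delta(z)$. Reading this off directly from the expansion~(\ref{e:comp5}) already established in this section, the term $\Upsilon_i z^{-(q-1+i(q-1)q)}$ contributes precisely when $j=q-1+i(q-1)q$, the term $\Xi_i z^{-i(q-1)q}$ contributes precisely when $j=i(q-1)q$, and no other monomial in the expansion produces a simple pole at the origin. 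The two arithmetic progressions $\{q-1+i(q-1)q:i\in\Z\}$ and $\{i(q-1)q:i\in\Z\}$ are disjoint (their initial terms differ by $q-1$, which is not divisible by the common step $(q-1)q$), so the three cases are mutually exclusive and jointly exhaustive.

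For part (2), my strategy is to transport the integral from $\mathcal{U}_1=U(\overline{e_{1,0}})$ back to the base edge $e_0$. The identification $\overline{e_{1,0}}=\gamma_{-1}\ast e_0$ established in the proof of Lemma~\ref{lem:mdd3}, together with $\overline{e_{1,0}}\in E^-_{\mathcal T}$, places us in the situation of Theorem~\ref{thm:fbtt2} and gives $L_\Delta(m+1)={\rm Res}_{\overline{e_{1,0}}}\,z^m\Delta(z)\,dz$ (applying the residue formula with center $a=1\in\mathcal{U}_1$ to $(x-1)^k$ and recombining via $x^m=\sum_k\binom{m}{k}(x-1)^k$). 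The parametrization~(\ref{e:parametrization}) assigns to this edge the local coordinate $z=\gamma_{-1}v=v-1$, so $dz=dv$, and the modular transformation law of $\Delta$ with $\gamma_{-1}=\left(\begin{smallmatrix}1&-1\\0&1\end{smallmatrix}\right)$ (whose automorphy factor $(0\cdot v+1)^{q^2-1}$ is trivial) gives $\Delta(v-1)=\Delta(v)$. Hence the residue becomes the coefficient of $v^{-1}dv$ in $(v-1)^m\Delta(v)\,dv$ on $D=\lambda^{-1}(e_0)$. Expanding $(v-1)^m=\sum_{k=0}^m(-1)^{m-k}\binom{m}{k}v^k$ by the binomial theorem and invoking part (1) on each of the resulting residues then produces~(\ref{e:comp6}) term by term.

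I do not anticipate a serious obstacle; once the Laurent expansion~(\ref{e:comp5}) and the relation $\overline{e_{1,0}}=\gamma_{-1}\ast e_0$ are in hand, the proof is essentially term-by-term matching of coefficients. The one delicate point is the choice of local parameter at each edge: for part (1), direct coefficient extraction on $D$ with $z$ itself as the parameter is the most transparent route, while for part (2), the key algebraic luck is that $\gamma_{-1}$ is unipotent, so the automorphy factor disappears and $\Delta(v-1)$ collapses to $\Delta(v)$; a generic element of $\Gamma$ would leave behind a $(cv+d)^{q^2-1}$ factor obstructing the reduction to a residue computation on $e_0$.
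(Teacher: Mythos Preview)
Your proposal is correct and follows essentially the same approach as the paper. Part~(1) is identical: both read off the residues directly from the Laurent expansion~(\ref{e:comp5}). For part~(2), both arguments use the identification $\overline{e_{1,0}}=\gamma_{-1}\ast e_0$ from Lemma~\ref{lem:mdd3} to transport the integral over $\mathcal U_1$ to one over $U(e_0)$, then expand $(x-1)^m$ binomially and invoke part~(1); the only cosmetic difference is that the paper quotes the $\Gamma$-equivariance formula~(\ref{e:fbtt5}) for the measure to get $\int_{U(\gamma_{-1}\ast e_0)}x^m\,d\mu_\Delta=\int_{U(e_0)}(x-1)^m\,d\mu_\Delta$ in one step, whereas you reach the equivalent residue identity by pulling back along the parametrization $z=v-1$ and using the modularity $\Delta(v-1)=\Delta(v)$---which is precisely how~(\ref{e:fbtt5}) is proved in the first place.
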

\begin{proof}
Conclusion (1) directly follows from the residue formula
(\ref{e:fbtt21}) and the equation (\ref{e:comp5}) above.

To prove the conclusion (2), we use the action of
$\gamma:=\gamma_{-1}=\left(\begin{array}{cc}1&-1\\0&1\end{array}
\right)$ on the edge $e_0$ as in the proof of Lemma \ref{lem:mdd3}
to get $\overline{e_{1,0}}=\gamma\ast e_0$. Since ${\mathcal U}_1
=U(\overline{e_{1,0}})$, we have for an integer $m\ge 0$
\begin{align*}
L_{\Delta}(m+1) &=\int_{{\mathcal U}_1} x^md\mu_{\Delta}(x)
=\int_{U(\gamma\ast e_0)} x^m d\mu_{\Delta}(x)
=\int_{U(e_0)}(x-1)^md\mu_{\Delta}(x)\\
&=\sum\limits_{j=0}^m(-1)^{m-j}\binom{m}{j}\int_{U(e_0)}x^jd\mu_{\Delta}(x)\\
&=\sum\limits_{j=0}^m(-1)^{m-j}\binom{m}{j}L_{\Delta}(j+1)\\
&=\sum\limits_{0\le q-2+i(q-1)q\le m}
(-1)^{m-(q-2+i(q-1)q)}\binom{m}{q-2+i(q-1)q}\Upsilon_i\\
&\qquad\qquad+\sum\limits_{0\le -1+i(q-1)q\le m}
(-1)^{m-(-1+i(q-1)q)}\binom{m}{-1+i(q-1)q}\Xi_i.
\end{align*}
\end{proof}

\begin{rem}\label{rem:comp1}
We have some remarks related to the values $L_{\Delta}(j)$ below.
\begin{itemize}
\item[(1)] Other than $\Upsilon_0\ne 0$ and $\Xi_1=0$ which we prove in Section \ref{mdd},
we don't know whether the other elements $\Upsilon_i,\,
\Xi_i\in{\bk_\infty}$ vanish or not.
\item[(2)] We don't know whether the elements $\Upsilon_i$ and
$\Xi_i$ are transcendental over $\bk$ (if they are not equal to
$0$).
\item[(3)] The values $L_{\Delta}(j)$ for $1\le j\le q^2-2$ are
calculated in Example \ref{example:comp1} in the following.
\end{itemize}
\end{rem}
We'll state the Lucas' formula of binomial numbers in characteristic $p$, which is
useful in the computation in our cases. Denote by
\[
((i_1,i_2,\cdots,i_s))=\frac{(i_1+i_2+\cdots+i_s)!}{i_1!\:
i_2!\:\cdots\:i_s!}
\]
for any integers $i_1,i_2,\cdots,i_s\geq 0$. We
have the following assertion about the multinomial
numbers by Lucas \cite{Lu}:
\begin{prop}[Lucas]\label{prop:comp2}
For non-negative integers $n_0,n_1,\cdots,n_s$,
\begin{equation}\label{e:comp7}
((n_0,n_1,\cdots,n_s)) \equiv \prod_{j\geq 0}
  ((n_{0,j},n_{1,j},\cdots,n_{s,j})) \mod p
\end{equation}
where $n_i=\sum\limits_{j\geq 0}n_{i,j}\, q^j$ is the
$q$-digit expansion for $i=0,1,\cdots,s$.
\end{prop}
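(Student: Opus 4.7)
The plan is to establish the congruence by interpreting $((n_0,\ldots,n_s))$ as the coefficient of $\prod_i x_i^{n_i}$ in the expansion of $(x_0+\cdots+x_s)^N\in\F_p[x_0,\ldots,x_s]$, where $N=n_0+\cdots+n_s$, and then computing that coefficient in a different way using the Frobenius. Since $q=p^r$, iterating the freshman's dream gives the polynomial identity $(x_0+\cdots+x_s)^{q^j}=x_0^{q^j}+\cdots+x_s^{q^j}$ in characteristic $p$. Writing $N=\sum_j N_j q^j$ with $0\le N_j<q$, this lets me factor
\[
(x_0+\cdots+x_s)^N=\prod_j\bigl((x_0+\cdots+x_s)^{q^j}\bigr)^{N_j}\equiv\prod_j(x_0^{q^j}+\cdots+x_s^{q^j})^{N_j}\pmod p.
\]

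Next I would extract the coefficient of $\prod_i x_i^{n_i}$ on the right by expanding each factor with the multinomial theorem. The contribution is a sum over tuples $(a_{i,j})_{i,j}$ of non-negative integers satisfying $\sum_i a_{i,j}=N_j$ for each $j$ and $\sum_j a_{i,j}q^j=n_i$ for each $i$, of the product $\prod_j((a_{0,j},\ldots,a_{s,j}))$. In the generic case where no carries occur in the base-$q$ addition of $n_0+n_1+\cdots+n_s$ (so $N_j=\sum_i n_{i,j}<q$ for every $j$), the bound $a_{i,j}\le N_j<q$ together with uniqueness of $q$-digit expansions forces $a_{i,j}=n_{i,j}$. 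The sum then collapses to the single term $\prod_j((n_{0,j},\ldots,n_{s,j}))$, yielding the claimed congruence.

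The main obstacle is the case in which a carry occurs at some position $j$, i.e.\ $\sum_i n_{i,j}\ge q$: then $N_j\ne\sum_i n_{i,j}$ and the assignment $a_{i,j}=n_{i,j}$ is not admissible, so the coefficient extraction above does not directly recover the right side. To handle this I would invoke Kummer's theorem, which identifies the $p$-adic valuation of a multinomial with the number of carries arising in the base-$p$ addition of its parts. A base-$q$ carry at position $j$ means $n_{0,j}+\cdots+n_{s,j}\ge p^r$; since each $n_{i,j}<p^r$ contributes to at most $r$ consecutive base-$p$ digit positions, the base-$p$ addition within those positions must overflow, producing at least one base-$p$ carry. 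Hence $p\mid((n_0,\ldots,n_s))$ (from the full addition) and $p\mid((n_{0,j},\ldots,n_{s,j}))$ (from the local addition), so both sides of \eqref{e:comp7} reduce to $0\bmod p$ and the identity holds trivially in this case as well.
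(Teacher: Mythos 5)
Your proof is correct. The paper itself offers no proof of this proposition --- it is stated with the attribution ``Lucas'' and a pointer to \cite{Lu} --- so any complete argument here is necessarily your own, and the generating-function route you take is the standard one: interpreting $((n_0,\dots,n_s))$ as the coefficient of $\prod_i x_i^{n_i}$ in $(x_0+\cdots+x_s)^N$ over $\F_p$, factoring via the Frobenius identity $(x_0+\cdots+x_s)^{q^j}=x_0^{q^j}+\cdots+x_s^{q^j}$ (valid since $q=p^r$), and using uniqueness of base-$q$ expansions to isolate the surviving term. Two small remarks. First, the bound $a_{i,j}\le N_j<q$ forces $a_{i,j}=n_{i,j}$ for \emph{every} admissible tuple, whether or not carries occur; so in the carry case the admissible set is simply empty and the coefficient extraction already gives $((n_0,\dots,n_s))\equiv 0 \pmod p$ directly, making the appeal to Kummer's theorem for the left-hand side unnecessary (though your justification there is also valid, since a base-$q$ carry forces a base-$p$ carry). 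Second, your Kummer argument for the right-hand side is the part that is genuinely needed and it is sound: if $\sum_i n_{i,j}\ge q=p^r$ while each $n_{i,j}<p^r$, a carry-free base-$p$ addition would keep the sum below $p^r$, a contradiction, so $p$ divides $((n_{0,j},\dots,n_{s,j}))$ and both sides of \eqref{e:comp7} vanish. In short, you have supplied a complete and correct proof of a result the paper only cites.
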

\begin{rem}\label{rem:comp2}
Proposition~\ref{prop:comp2} is useful when $s=1$.
In this case formula (\ref{e:comp7}) is
expressed in the form: let $n=\sum\limits_j n_j\, q^j$ and
$k=\sum\limits_j k_j\, q^j$ be $q$-digit expansion for
non-negative integers $n$ and $k$, then
\begin{equation}\label{e:comp8}
\binom nk \equiv \prod_{j\geq 0} \binom{n_j}{k_j} \mod p.
\end{equation}
\end{rem}

\begin{example}\label{example:comp1}
The values $L_{\Delta}(j)$ for $1\le j\le q^2-2$ can be calculated quickly by putting
in $m=j-1$ in the formula (\ref{e:comp6}), where the index $i=0$
in the first summation and $i=1$ in the second summation
on the right side of the formula. As $\Xi_1=0$, we get
\begin{equation*}
L_{\Delta}(j)=(-1)^{j-q+1}\binom{j-1}{q-2}\Upsilon_0, \quad 1\le j\le q^2-2.
\end{equation*}
After applying Lucas' formula (\ref{e:comp8}), we see that $L_{\Delta}(j)\ne 0$
if and only if $j=q-1+lq, q+lq$ for $l=0, 1, \cdots, q-2$, and
\begin{equation*}
L_{\Delta}(q-1+lq)=(-1)^l\Upsilon_0, \qquad L_{\Delta}(q+lq)=(-1)^l\Upsilon_0.
\end{equation*}
Therefore the functional equation (\ref{e:compfeq}) for $L_{\Delta}(j)$ with
$1\le j\le q^2-2$ essentially becomes
\begin{align*}
L_{\Delta}(q-1+lq)&=(-1)^l\Upsilon_0=(-1)\cdot(-1)^{q-l-2}\Upsilon_0
=-L_{\Delta}((q-l-1)q)\\
&=-L_{\Delta}(q^2-1-(q-1+lq)).
\end{align*}
\end{example}

\vspace{12pt} Due to Gekeler's work ((2) of Theorem \ref{thm:3.1}), we have
$\Delta(z)=-(P_{q+1,1}(z))^{q-1}$, where the Poincar\'e series
$P_{q+1,1}(z)$ is a cusp form of $\Gamma$ of weight $q+1$ and type
$1 \mod (q-1)$. For a cusp form $f$ of $\Gamma$ with weight $n$ and
type $m$, the equation (\ref{e:fbtt5}) becomes
\begin{equation}\label{e:comp10}
\int_{U(\gamma\star e)}g(x)d\mu_f(x)=\int_{U(e)}
(\det(\gamma))^{1-m}(cx+d)^{n-2}g(\gamma x)d\mu_f(x)
\end{equation}
where the action ``$\star$" of $\Gamma$ on $\mathcal T$ is actually ``$\ast$",
see \cite{Go1992} and \cite{Te1992} for detailed exposition on the above equation.
Let $\mathcal{P}(z)$ denote by the Poincar\'e series $P_{q+1,1}(z)$
and $\mu_{\mathcal P}$ the associated measure on
${\P1}(\bk_\infty)$. And let
\begin{equation*}
{\mathscr X}_0=\int_{U(e_0)}d\mu_{\mathcal P}.
\end{equation*}

\begin{prop}\label{prop:comp3}
The measure $\mu_{\mathcal P}$ on ${\P1}(\bk_\infty)$ is $h$-admissible with
$h$ being the smallest integer greater than or equal to $(q-1)/2$, and is
completely determined by the following values:
\begin{align*}
&{\rm (1)\,} \int_{U(e_0)} x^jd\mu_{\mathcal P}(x)=\begin{cases}0,\, &\text{ if \, $0< j\le q-1$},\\
{\mathscr X}_0\ne 0,\, &\text{ if  $j=0$}.\end{cases}\qquad\qquad\qquad\qquad\\
&{\rm (2)\,} \int_{U(e_n)}x^jd\mu_{\mathcal P}(x)=0,\, \text{ for
\,$0\le j\le q-1$ and any $n\ge 1$.}
\end{align*}
The notations $e_n$ for $n\in{\Z}$ are the same as those in Section \ref{mdd}.
\end{prop}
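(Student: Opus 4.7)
\noindent\emph{Proof plan.} The strategy will run parallel to the proof of Theorem~\ref{thm:mdd2}, but is markedly simpler because the weight $n=q+1$ is much smaller than $q^2-1$ and the type $m=1$ trivialises the factor $(\det\gamma)^{1-m}$ in the transformation law~(\ref{e:comp10}). First, since $\mathcal{P}\in S_{q+1}(\Gamma)$ is a cusp form of weight $q+1$, Corollary~\ref{cor:fbtt1} gives $h$-admissibility of $\mu_{\mathcal P}$ with $h$ the smallest integer $\ge (q-1)/2$. Since the half line~(\ref{e:2.1}) is a fundamental domain for the $\ast$-action of $\Gamma$ on $\mathcal T$, the $\Gamma$-equivariance of $\mu_{\mathcal P}$ reduces the description of the measure to computing $I_n(j):=\int_{U(e_n)}x^j d\mu_{\mathcal P}(x)$ for $n\ge 0$ and $0\le j\le q-1$, and (\ref{e:comp10}) specialises to $\int_{U(\gamma\ast e)}g(x)d\mu_{\mathcal P}(x)=\int_{U(e)}(cx+d)^{q-1}g(\gamma x)d\mu_{\mathcal P}(x)$.

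For conclusion (1), I would mimic the proof of Lemma~\ref{lem:mdd3} at the vertex $\Lambda_0$. The edges terminating at $\Lambda_0$ are $\overline{e_0}$, $e_{-1}$, and $e_{b,0}$ for $b\in\Fq^*$, and the identifications $\overline{e_{b,0}}=\gamma_{-b^{-1}}\ast e_0$ and $\overline{e_{-1}}=\delta\ast e_0$ recorded there remain valid because they depend only on the $\ast$-action. Writing out the harmonic condition $\sum_{e\to\Lambda_0}\mathfrak c_{\mathcal P}(e)(X^j Y^{q-1-j})=0$ and applying the simplified transformation law---noting that $(cx+d)^{q-1}$ equals $1$ for $\gamma_{-b^{-1}}$ and $x^{q-1}$ for $\delta$---yields
\begin{equation*}
\sum_{b\in\Fq^*}\int_{U(e_0)}(x-b^{-1})^jd\mu_{\mathcal P}(x)+I_0(j)+I_0(q-1-j)=0.
\end{equation*}
Expanding binomially and using the standard identity that $\sum_{\beta\in\Fq^*}\beta^k$ equals $-1$ when $(q-1)\mid k$ and $0$ otherwise, only the $k=j$ term contributes for $0\le j\le q-2$ (the $k=0$ term also contributes when $j=q-1$, but the equation then collapses to $0=0$). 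For $0\le j\le q-2$ this reduces to $-I_0(j)+I_0(j)+I_0(q-1-j)=0$, forcing $I_0(i)=0$ for every $1\le i\le q-1$.

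Conclusion (2) follows by induction on $n$, mirroring the proof of Theorem~\ref{thm:mdd2}(2). At the base case $n=1$, the harmonic relation at $\Lambda_1$ combined with the identification $e_{b,1}=(\gamma_b^{-1})^T\ast e_0$ (for $b=\beta T$, $\beta\in\Fq^*$) and the type-$1$ transformation law gives
\begin{equation*}
I_1(j)=\sum_{b\in T\cdot\Fq^*}\int_{U(e_0)}x^j(-bx+1)^{q-1-j}d\mu_{\mathcal P}(x)+I_0(j).
\end{equation*}
Expanding $(-bx+1)^{q-1-j}$ and summing over $b=\beta T$, the same $(q-1)$-divisibility criterion leaves only the $i=0$ contribution (the potential $i=q-1$ term at $j=0$ is killed by $I_0(q-1)=0$ from conclusion (1)), and this cancels exactly $I_0(j)$, delivering $I_1(j)=0$ for all $0\le j\le q-1$. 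The step from $n$ to $n+1$ is identical using the groups $\Gamma_n$ of Proposition~\ref{prop:2.1}.

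The remaining assertion $\mathscr X_0\ne 0$ is obtained by contradiction, after the fashion of Corollary~\ref{cor:mdd1}: if $\mathscr X_0=0$ then every $I_n(j)$ vanishes for $n\ge 0$ and $0\le j\le q-1$, so by $\Gamma$-equivariance the cocycle $\mathfrak c_{\mathcal P}$ is identically zero, whence $\mathcal P\equiv 0$ by Teitelbaum's Theorem~\ref{thm:fbtt3}; but then $\Delta=-\mathcal P^{q-1}=0$, contradicting $\Upsilon_0\ne 0$ in Corollary~\ref{cor:mdd1}. The principal bookkeeping difficulty is the careful tracking of signs and of the $(q-1)$-divisibility conditions inside the binomial sums of Steps (1) and (2); everything else is structural and rests directly on the results already assembled in Sections~\ref{btt}--\ref{mdd}.
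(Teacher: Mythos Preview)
Your proposal is correct and follows essentially the same route as the paper's own proof: the paper also derives (1) from the harmonic relation at $\Lambda_0$ together with the transformation law~(\ref{e:comp10}) (noting the sign change relative to the $\Delta$-case caused by the type $m=1$), and then refers back to the proofs of Lemma~\ref{lem:mdd3}, Theorem~\ref{thm:mdd2}, and Corollary~\ref{cor:mdd1} for (2) and for $\mathscr X_0\ne 0$. Your parametrisation of the translated edges via $\gamma_{-b^{-1}}$ rather than directly via $\gamma_b$ is a cosmetic difference only, since one is summing over all of $\Fq^*$.
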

\begin{proof}
We'll only show that the equation (1) of the
theorem holds here, the rest of the proof is the same as those of
Lemma \ref{lem:mdd3}, Theorem \ref{thm:mdd2}, and Corollary \ref{cor:mdd1}.

The action of $\delta=\left(\begin{array}{cc}0&1\\1&0\end{array}\right)$
on $e_0$ gives $\overline{e_{-1}}=\delta\ast e_0$. Therefore
\begin{equation*}
\int_{U(e_{-1})}x^jd\mu_{\mathcal P}(x)=-\int_{U(\delta\ast e_0)}
x^jd\mu_{\mathcal P}(x)=-\int_{U(e_0)}x^{q-1-j}d\mu_{\mathcal P}(x),
\end{equation*}
where in the second equality we need to apply the equation (\ref{e:comp10})
since the type of the Poincar\'e series ${\mathcal P}(z)$ is $1 \mod (q-1)$.
Then from
\begin{align*}
\sum\limits_{b\in{\Fq}^{*}}\int_{U(\gamma_b\ast e_0)}
x^jd\mu_{\mathcal P}(x)+\int_{U(e_0)}x^jd\mu_{\mathcal P}(x)
-\int_{U(e_{-1})}x^jd\mu_{\mathcal P}(x)=0
\end{align*}
we get
\begin{align}
-\int_{U(e_0)}x^{q-1-j}d\mu_{\mathcal P}(x)
&=\int_{U(e_0)}x^jd\mu_{\mathcal P}(x)+\sum\limits_{b\in{\Fq}^*}
  \int_{U(e_0)}(x+b)^jd\mu_{\mathcal P}(x)\notag\\
&=\int_{U(e_0)}x^jd\mu_{\mathcal P}(x)+\sum\limits_{i=0}^j
  \binom{j}{i}\sum\limits_{b\in{\Fq}^*}b^i \int_{U(e_0)}x^{j-i}d\mu_{\mathcal P}(x)
  \label{e:comp11}
\end{align}
As $0\le i\le q-1$, we have the equality
\begin{equation*}
\sum_{b\in{\Fq}^*} b^i=
\begin{cases}
0, \quad &\text{ if $0< i< q-1$},\\
-1, \quad &\text{ if $i=0, \, q-1$},
\end{cases}
\end{equation*}
thus, the equation (\ref{e:comp11}) can be written as
\begin{equation*}
-\int_{U(e_0)}x^{q-1-j}d\mu_{\mathcal P}(x)=
\begin{cases}
0, &\quad\text{ if $0\le j<q-1$},\\
-\int_{U(e_0)}d\mu_{\mathcal P}(x), &\quad\text{ if $j=q-1$}.
\end{cases}
\end{equation*}
Therefore we get (1) of the theorem except for ${\mathscr X}_0\ne 0$. But this
conclusion can be proved in the same way as Corollary \ref{cor:mdd1}.
\end{proof}

We denote an element $\gamma\in {\GL2}(\A)$ by
$\gamma=\left(\begin{array}{cc}a_{\gamma}&b_{\gamma}\\
  c_{\gamma}&d_{\gamma}\end{array}\right) $, then get
\begin{cor}\label{cor:comp1}
For $0\le j\le q-1$,
\begin{equation*}
\int_{U(e)}x^jd\mu_{\mathcal P}(x)=
\begin{cases}
0,\; & \text{ if $e=\gamma\ast e_n$ for some $n\ge 1$ and some
$\gamma\in{\Gamma}$},\\
b_{\gamma}^j d_{\gamma}^{q-1-j}{\mathscr X}_0, \; &
\text{ if $e=\gamma\ast e_0$ for some $\gamma\in\Gamma$}.
\end{cases}
\end{equation*}
\end{cor}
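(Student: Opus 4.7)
The plan is a direct reduction to Proposition \ref{prop:comp3} via the $\Gamma$-equivariance of $\mu_{\mathcal P}$ recorded in equation (\ref{e:comp10}). Since $\mathcal P = P_{q+1,1}$ has weight $n = q+1$ and type $m \equiv 1 \bmod (q-1)$, the twist $(\det\gamma)^{1-m}$ is trivial, and a direct computation gives $(c_\gamma x + d_\gamma)^{q-1} (\gamma x)^j = (a_\gamma x + b_\gamma)^j (c_\gamma x + d_\gamma)^{q-1-j}$. Thus (\ref{e:comp10}) with $g(x) = x^j$ specializes to
\begin{equation*}
\int_{U(\gamma \ast e_n)} x^j \, d\mu_{\mathcal P}(x) = \int_{U(e_n)} (a_\gamma x + b_\gamma)^j (c_\gamma x + d_\gamma)^{q-1-j}\, d\mu_{\mathcal P}(x).
\end{equation*}

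Next I would expand the right-hand side by the binomial theorem as a $\C_\infty$-linear combination of integrals $\int_{U(e_n)} x^s d\mu_{\mathcal P}(x)$ with $0 \le s \le q-1$. For $n \ge 1$, Proposition \ref{prop:comp3}(2) asserts that every such integral vanishes, so the left-hand side is $0$; this handles the first case of the corollary. For $n = 0$, Proposition \ref{prop:comp3}(1) says that only the $s = 0$ integral is nonzero, equal to $\mathscr X_0$. The coefficient of $x^0$ in the expansion of $(a_\gamma x + b_\gamma)^j (c_\gamma x + d_\gamma)^{q-1-j}$ is precisely $b_\gamma^j d_\gamma^{q-1-j}$, and multiplying by $\mathscr X_0$ yields the value claimed in the second case.

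The corollary is essentially a formal consequence of the equivariance together with Proposition \ref{prop:comp3}, so no substantial obstacle is expected. The only point deserving attention is that the type $m = 1$ (rather than $m = 0$) is what eliminates the determinantal twist from (\ref{e:comp10}); this is also what makes the surviving monomial in the expansion be $x^0$ (of total degree $q-1 = n-2$) rather than some other power, so that the answer depends only on the lower row $(c_\gamma, d_\gamma)$ through the expansion coefficient $b_\gamma^j d_\gamma^{q-1-j}$.
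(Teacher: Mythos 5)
Your proof is correct and is exactly the argument the paper intends: the corollary is stated without proof as the formal consequence of the equivariance relation (\ref{e:comp10}) (with the twist $(\det\gamma)^{1-m}$ trivial because $m=1$) together with Proposition \ref{prop:comp3}, precisely parallel to how Corollary \ref{cor:mdd2} follows for $\mu_{\Delta}$. Your observation that only the constant term $b_\gamma^j d_\gamma^{q-1-j}$ of $(a_\gamma x+b_\gamma)^j(c_\gamma x+d_\gamma)^{q-1-j}$ survives against $\int_{U(e_0)}d\mu_{\mathcal P}=\mathscr X_0$ is the whole content of the second case.
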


\begin{rem}\label{rem:comp3}
Although the Drinfeld discriminant $\Delta(z)$ and the Poincar\'e series
${\mathcal P}(z)$ are related by $\Delta(z)=-({\mathcal P}(z))^{q-1}$, we don't know
if there are direct relations in the space of $h$-admissible measures ($h$ big enough)
between their associated measures $\mu_{\Delta}$
and $\mu_{\mathcal P}$, or even there are direct relations between the
constants $\Upsilon_0$ and ${\mathscr X}_0$ as elements of ${\C}_\infty$.
In the general case, for any $f\in S_n(\Gamma)$, Teitelbaum \cite{Te1991} has proved
that $\int_{U(e_i)} x^jd\mu_f(x)=0$ for $0\le j\le n-2$ and all $i\ge i_0$ for some
integer $i_0$, but how much extension we can say about the values
$\int_{U(e_i)} x^jd\mu_f(x)$ for $0\le j\le n-2$ and $0\le i< i_0$ and how they are
related to the special values of the characteristic $p$ valued ``$L$-function"
$L_f(s)$ are not clear.
\end{rem}

\begin{rem}\label{rem:comp4}
The zeta function $\zeta(s)$ for $s=(x,y)\in S_{\infty}={\C}_\infty^{\ast}\times
{\Z}_p$ in Section \ref{introduction} has special values $\zeta(x, -j)$ for
integers $j\ge 0$. And $\zeta(T^j, j)={\rm z}(j)$ $=\sum_{a\in {\A}^{+}}
\frac{1}{a^j}$ as mentioned in the beginning of Section \ref{introduction}.
The zeta measure $\mu^{(\infty)}_x=x\nu^{(\infty)}_x$ is a measure on
${\A}_\infty$ given by (see Section $3$ of \cite{Ya2001})
\[
\nu^{(\infty)}_x=\sum\limits_{k=1}^\infty (-1)^k x^{-k}
{G}^*_{q^k-1}
\]
where the measures $\{{G}^*_{q^k-1}\}_{k\ge 1}$ are given as follows: let
${\A}_\infty=\bigsqcup_i B_{\alpha_i}(|\pi|^l)$ be a disjoint decomposition into
closed balls, where $l$ is chosen such that $l\ge k$, $\alpha_i\in
{\Fq}[\pi]$, and $\deg_{\pi}(\alpha_i)<l$ for each $i$. Each measure
${G}^*_{q^k-1}$ is $0$-admissible, and is given by (see the computation in
Section $3$ of \cite{Ya2001})
\begin{equation}\label{e:comp12}
{G}^*_{q^k-1}(B_{\alpha_i}(|\pi|^l))=
\begin{cases}
(-1)^k, & \text{ if $\deg_{\pi}(\alpha_i)<k$ },\\
0,   & \text{ if $k\le \deg_{\pi}(\alpha_i)<l$ }.
\end{cases}
\end{equation}
By using Example \ref{example:fbtt3}, we see that the measures
$\{{G}^*_{q^k-1}\}_{k\ge 1}$ can be easily expressed as functions
on the subtree $\mathcal W$ of $\mathcal T$, where $\mathcal W$ is obtained
from $\mathcal T$ by cutting
every vertex and every edge in the paths starting at the vertex $\Lambda_1$
except for those containing the edge $\Lambda_1\Lambda_0$:
\begin{equation*}
\xymatrix@=36pt{
                 &\ar@{}[dr] &  &\\
&{\circ}\ar@{-}[r]\ar@{ }_{\Lambda_{-1}}\ar@{.}[l] &{\circ}\ar@{-}[ul]\ar@{-}[r]
\ar@{}_{\Lambda_0} &{\circ}
\ar@{}_{\Lambda_1} & \\
                          &\ar@{-}[ur] & &
}
\end{equation*}
There are many ways to extend such functions to
harmonic functions on the (oriented) edges of $\mathcal T$.
But unlike the measures associated to cusp forms, the measures
${G}^*_{q^k-1}$, $\nu^{(\infty)}_x$, and
$\mu^{(\infty)}_x$ clearly lack symmetries under the action of $\Gamma$.
Further study is necessary in order to understand them better.
\end{rem}

\vspace{12pt}
In Section \ref{cpvm}, we have discussed the characteristic $p$ valued distributions
by using $C^h$ functions. Although $C^h$ functions and the functions with Lipschitz
conditions are not studied very often in rigid analytic geometry and
related topics, the study of their dual spaces (the spaces of
$h$-admissible measures and their variants) may have interesting
applications in the theory of ergodic functions over ${\Fq}[[\pi]]$. In the
case $q=2$, an ergodic function $f: {\F}_2[[\pi]]\to {\F}_2[[\pi]]$ is a continous
function
\[
f(x)=\sum\limits_{j=0}^\infty a_j\, G_j(x)
\]
(where $G_j(x)$ for $j\ge 0$ are the Carlitz polynomials given in Section \ref{cpvm})
which satisfies the following conditions on the coefficients $a_j$ for $j\ge 0$
(see \cite{LSY2011}) :
\begin{itemize}
\item[(1)] $a_{0}\equiv1\mod \pi$,\,\; $a_{1}\equiv 1+\pi \mod
{\pi}^{2}$, \,\; $a_{3}\equiv {\pi}^{2} \mod {\pi}^{3}$;
\item[(2)] $\left\vert a_{j}\right\vert <|\pi|^{[\log_2(j)]}
=2^{-[\log_2 (j)]}$, for $j\geq 2$;
\item[(3)] $a_{2^{j}-1}\equiv {\pi}^{j} \mod {\pi}^{j+1}$ for $j\ge 2$.
\end{itemize}
We can see from the above descriptions and (2) of Theorem \ref{thm:cpvm3} that ergodic
functions on ${\F}_2[[\pi]]$ are not $C^1$ functions
(therefore certainly are not locally analytic functions), but are continuous
functions satisfying the so-called $1$-Lipschitz condition (see Chapter 3
of \cite{AK2009} or \cite{LSY2011} for the concept ``$1$-Lipschitz" condition).
In the case $q=2$, the measures $\mu_{\Delta}$ and $\mu_{\mathcal P}$
are $1$-admissible. After checking Teitelbaum's estimation (\ref{e:fbtt6}) and
the construction of integrals in Lemma \ref{lemma:cpvm4} more carefully, we can
see that the ergodic functions on ${\F}_2[[\pi]]$ are integrable against
$\mu_{\Delta}$ or $\mu_{\mathcal P}$. Due to the applications of the
theory of ergodic functions in non-Archimedean analysis to cryptography, we
wish that further studies of the duality between functions and measures
of characteristic $p$ would be helpful in applications.

\end{document}